
\documentclass[reqno]{amsart}

%

\usepackage[margin=1.5in]{geometry}

\usepackage{amsmath,amsthm,amscd,amssymb}
\usepackage{latexsym}
\usepackage{color}

\usepackage{bbm}

\usepackage{todonotes}
\usepackage[all]{xy}

\usepackage{calrsfs}
\DeclareMathAlphabet{\pazocal}{OMS}{zplm}{m}{n}


\numberwithin{equation}{section}

\newtheorem{Thm}{Theorem}[section]

\newtheorem{Lem}[Thm]{Lemma}
\newtheorem{Prop}[Thm]{Proposition}
\newtheorem{Def}[Thm]{Definition}
\newtheorem{Rmk}[Thm]{Remark}

\newtheorem{Cond}[Thm]{Condition}
\newtheorem{Exam}[Thm]{Example}

\newcommand{\e}{\varepsilon}

\newcommand{\eps}{\theta}
\newcommand{\epp}{\epsilon}

\renewcommand{\L}{\Lambda}

\newcommand{\N}{\mathbb{N}}
\newcommand{\Z}{\mathbb{Z}}
\renewcommand{\P}{\mathbb{P}}
\newcommand{\R}{\mathbb{R}}
\newcommand{\E}{\mathbb{E}}
\newcommand{\T}{\mathbb{T}}

\newcommand{\V}{{\rm Var}}

\newcommand{\m}{\mf{m}}
\newcommand{\D}{\mf{D}}

\newcommand{\wpi}{\widehat \pi}

\newcommand{\Mb}{\pazocal{M}}

\newcommand{\Qb}{\pazocal{Q}}
\newcommand{\Pam}{\mathcal{R}}
\newcommand{\Pb}{\pazocal{P}}

\newcommand{\Ub}{\pazocal{U}}

\newcommand{\id}{\mathbbm{1}}

\newcommand{\mf}[1]{{\mathfrak #1}}

\newcommand{\dsp}{\displaystyle}


\title[Zero-range processes with slow sites] {Condensation, boundary conditions, and effects of slow sites in zero-range systems}

\author{Sunder Sethuraman}
\address{Department of Mathematics, University of Arizona,  Tucson, AZ 85721, USA}
\email{sethuram@math.arizona.edu}
\author{Jianfei Xue}
\address{Department of Mathematics, University of Missouri,  Columbia, MO 65211,USA}
\email{jxue@missouri.edu}

\begin{document}

\begin{abstract}
We consider the space-time scaling limit of the particle mass in zero-range particle systems on a $1$D discrete torus $\Z/N\Z$ with a finite number of defects.   
 We focus on two classes of increasing jump rates $g$, when $g(n)\sim n^\alpha$, for $0<\alpha\leq 1$, and when $g$ is a bounded function.   In such a model, a particle at a regular site $k$ jumps equally likely to a neighbor with rate $g(n)$, depending only on the number of particles $n$ at $k$. 
 At a defect site
 $k_{j,N}$, however, the jump rate is slowed down to $\lambda_j^{-1}N^{-\beta_j}g(n)$ when $g(n)\sim n^\alpha$, and to $\lambda_j^{-1}g(n)$ when $g$ is bounded.  Here, $N$ is a scaling parameter where the grid spacing is seen as $1/N$ and time is speeded up by $N^2$.

Starting from initial measures with $O(N)$ relative entropy with respect to an invariant measure,
we show the hydrodynamic limit and characterize boundary behaviors at the macroscopic defect sites $x_j = \lim_{N\uparrow \infty} k_{j, N}/N$, for all defect strengths.
For rates $g(n)\sim n^\alpha$,
at critical or super-critical slow sites ($\beta_j=\alpha$ or $\beta_j>\alpha$),  associated Dirichlet boundary conditions arise as a result of interactions with evolving atom masses or condensation at the defects.
Differently, when $g$ is bounded, at any slow site ($\lambda_j>1$), we find the hydrodynamic density must be bounded above by a threshold value reflecting the strength of the defect. Moreover, due to interactions with masses of atoms stored at the slow sites,
 the associated boundary conditions bounce between being periodic and Dirichlet.

\end{abstract}

\subjclass[2020]{60K35}

 \keywords{interacting particle system, zero-range, hydrodynamic, boundary condition, defect, inhomogeneity, condensation}

\maketitle

\maketitle

\section{Introduction}
The purpose of this article is to understand the macroscopic boundary conditions which arise in hydrodynamic scaling limits for the space-time `bulk' mass evolution of zero-range processes with a finite number of `defects', and also related effects of `condensation' at these defect locations.  Such an aim more broadly fits into the larger study of how macroscopic boundary conditions emerge from inhomogeneous microscopic interactions.

In this view, there has been much interesting work on
one-dimensional ($1$D) exclusion models where a site or bond is `slowed' down.
In \cite{Franco_aihp}, \cite{Franco_tams}, \cite{Franco_spa}, in computing the hydrodynamic limit in symmetric systems, different boundary conditions from Dirichlet to Neumann, and also Robin have been derived; see also \cite{DePaula}.   
 There are however only a few works with respect to different interactions, in particular zero-range systems, which do not limit the particle numbers at a location.
 Among these, \cite{JLT} studies the hydrodynamic limit for a system of $1$D symmetric independent particles moving in a (random) trap environment.  In \cite{lan_zrp}, with respect to a $1$D totally asymmetric zero-range process, with bounded, increasing rate function $g(\cdot)$, effects of a slow site and a slow particle are found when starting from a `flat' initial measure. 
  In \cite{Saada_hyd} (see also \cite{Saada}), with respect to a class of such $1$D totally asymmetric zero-range systems, however with a nontrivial density of site disorders, hydrodynamics is shown with respect to an effective flux function, constant at supercritical densities; see also the conjectures in Section 3.3 in \cite{Saada_hyd} about slow sites in asymmetric models.
   
We also mention that $1$D systems with `reservoir' boundaries have been studied; see \cite{Olla_reservoir} for a discussion of `hydrostatics', and related references, with respect to exclusion models.
  In zero-range processes, `static' reservoir effects are studied and dynamical conjectures are discussed in \cite{Frometa}.
 
In this context, the general goal of our work is to consider the effect of a finite number of `slow' sites in a class of $1$D symmetric zero-range systems in a torus $\Z/N\Z$. 
In such a model, particles would `condense' on a defect site if jump rates from it are `slow' enough.  One would expect that for the particle continuum mass, different boundary conditions would result depending on how `slow' the defect is.  
Our main results describe corresponding hydrodynamic limits, in terms of a nonlinear parabolic PDE and evolving point masses, with specified boundary conditions at defects reflecting different types of condensation.  To our knowledge, these are the first results for a general class of zero-range interactions, with bounded and unbounded rate functions $g$.  In particular, the behaviors found differ depending on the structure of the rate $g$.  The proof method, in the scheme of the `entropy' method, specifies local `replacements' which may be useful in other problems.

By specifying the locations of the `slow' defects in the system, we fix the macroscopically separated points where `condensation' can occur.  There seems to be little work on the dynamical structure in such systems.
This is in contrast to the well-developed study of  `condensation', which `spontaneously' forms at a random location  by introducing more particles in a zero-range system with a bounded decreasing rate function than is allowed to equilibriate.  
See \cite{Evans_Mukamel} for a discussion of both mechanisms with respect to canonical and grand canonical invariant measures of a bounded rate zero-range process with a single defect.

In passing, we mention, among the recent literature on the `spontaneous' formation of `condensation', \cite{Armendariz} considers, with respect to a thermodynamic limit in $1$D symmetric zero-range models,
the evolution of the random `condensate' in a certain time-scale.
In \cite{Landim_nonrev_condensate}, with respect to $1$D asymmetric dynamics in a set of $L$ fixed sites, motion of the `condensate' is described.   In \cite{Lan_mart}, a `martingale problem' approach for the condensate dynamics is developed.  In \cite{Loulakis}, some partial results on a hydrodynamic limit is given.  See also references therein in these papers for a more complete history of the subject. 
For related notions of `metastability', see books and surveys \cite{Bovier}, \cite{Lan_surv}, \cite{Vares}.

\subsection{Sketch of results} To describe our results, 
we consider zero-range processes on the torus 
${\mathbb T}_N := \{0,1,\ldots, N-1\}$ where the site $N$ (the right neighbor of $N-1$) is identified as $0$.
 The jump rate functions $g: \N_0 \mapsto [0,\infty)$ focused upon satisfy $\lim_{n\uparrow \infty} g(n)/n^\alpha = 1$ (to fix a time-scale) and $\alpha\in [0,1]$. 
Moreover, we classify the $g$'s into two families:  (1)  $g(n) \sim n^\alpha$ when $0< \alpha\leq 1$, and (2) $g$ is bounded.  
 In both settings, we will assume also that $g$ is a Lipschitz, increasing function, which rules out the case $\alpha>1$.  Such an assumption is convenient to bound the rate $g$ in estimates, and also allows `attractive' process couplings; see Section \ref{sec: coupling} 
for a discussion of the use of `attractive' coupling.

In this process, at a regular site $k\in \T_N$, if there are $n$ particles there, one of them leaves with rate $g(n)$ to a neighbor, jumping either to $k-1$ or $k+1$, with equal probability.
However, at a defect site $k$, the departure rate is altered as follows:  Let $\lambda>0$ and $\beta\in \R$.
If there are $n$ particles at the defect $k$, one of them leaves at rate $\tfrac{1}{\lambda N^{\beta}}g(n)$. 
We will say the rate is `slow' when $\beta>0$, or $\beta=0$ and $\lambda>1$.

The zero-range system tracks the evolution of the unlabeled particles on $\T_N$.  We denote by $\xi_t = \{\xi_t(k): k\in \T_N\}$ the configuration of the process, where $\xi_t(k)$ is the number of particles at site $k$ at time $t\geq 0$.  Given the symmetric transitions, it will be useful to define also the speeded-up process $\eta_t = \xi_{N^2t}$.    
For this Markov system, there is a family of product (reversible) invariant measures $\Pam^N_c$, indexed by an interval of `density' parameters $c$; 
see Section \ref{sec: invariant measures}.

For the system with rate $g(n)\sim n^\alpha$, we will start the process from measures $\mu^N$, associated to an initial macroscopic measure $\pi_0$ on the unit torus $\T=[0,1)$, with $O(N)$ relative entropy with respect to an invariant distribution $\Pam^N_{c_0}$, and stochastically bounded by another invariant distribution $\Pam^N_{c'}$.  With respect to bounded rates $g$, $\mu^N$ satisfies a similar but slightly different criteria, since when 
$\|g\|_{\infty}=g_\infty<\infty$, there will be a finite effective critical density above which the invariant measure is not defined;  see Condition \ref{condition on mu N}.

Here, in the setting $g(n)\sim n^\alpha$, the initial profile $\pi_0$ will be in form
$$\pi_0(dx) = \rho_0(x)dx + \sum_{j:\beta_j=\alpha} \m_{0,j}\delta_{x_j}(dx),$$
with a similar formulation in the $g$ bounded setting.
 Examples of suitable initial measures $\mu^N$ are given by local equilibrium product measures; see Section \ref{sec: local equilibria}.

\subsubsection{
Rates $g(n)\sim n^\alpha$}  To describe the main result in the setting $g(n)\sim n^\alpha$, it will be helpful to get a sense of the `condensation' of particles, under an invariant measure $\Pam^N_c$.
 Typically at a slow site, (1) the number of particles will be $O(N^{\beta/\alpha})$ when $\beta> \alpha$, referred to as `super-slow' or `super-critical', (2) order $O(N)$ when $\beta=\alpha$, called `critical', and (3) $o(N)$ when $\beta<\alpha$, named as `sub-critical'.  So, with a finite number of slow sites, if one of the $k_{j,N}$'s is `super-slow',
 there will be a superlinear 
$O(N^{\beta_j/\alpha})$ number of particles at that location.  Whereas, when all the $\beta_j\leq \alpha$, there will be $O(N)$ particles on $\T_N$. See Section \ref{sec: static limit} for precise statements.

Let $\D_{s,N}$ be the set of  super-slow sites $k_{j,N}$,
 and let $\D_s$ be the corresponding set of continuum points $x_j \sim k_{j,N}/N$.  Since, at these locations, the particle numbers are superlinear, we omit them in the definition of the empirical measure with respect to the diffusively scaled system:
$$\pi^N_t = \frac{1}{N}\sum_{k\in \T_N\setminus \D_{s,N}} \eta_{t}(k) \delta_{k/N}.$$
By a hydrodynamic limit, we mean, with respect to test functions $G\in C(\T)$, that
$\langle G, \pi^N_t\rangle$ converges in probability to $\langle G, \pi_t\rangle$, where $\pi_t$ is a measure-valued weak solution of a specified macroscopic evolution.

Our first result (Theorem \ref{main thm: k alpha}) is that three different behaviors and boundary conditions, at the macroscopic defect sites $x_j$, arise in the hydrodynamic limit depending on when $\beta_j>\alpha$, $\beta_j = \alpha$ and $\beta_j<\alpha$.  As might be suspected, when $\beta_j<\alpha$, the defect is not `slow' enough to be seen in the continuum limit.  However, defects where $\beta_j\geq \alpha$ on the other hand do register.  To describe the bulk mass macroscopic flow, segregate the unit torus $\T$ into intervals with endpoints $x_j$ corresponding to $\beta_j\geq \alpha$.  In the interior of each interval, the hydrodynamic limit is described by a nonlinear heat equation
\begin{equation}
\label{hyd_lim_intro}
\partial_t \rho = \partial_{xx}\Phi(\rho).
\end{equation}
Here, the `fugacity' function $\Phi$ is a continuum homogenization of the microscopic rate $g$ (cf. \eqref{Phi_eqn}).
We note this equation also arises in the context of the zero-range system without disorder. (cf. Chapter 4 \cite{KL}).

However, at a defect point $x_j$ where $\beta_j = \alpha$, an atom evolves with 
a mass given by $\mathfrak{m}_j(t) = \big\{\lambda_j \Phi(\rho(t,x_j))\big\}^{1/\alpha}$, in terms of $\rho$.  Such a statement is natural, given that under $\mu^N$, not far from $\Pam^N_{c_0}$, there are $O(N)$ particles at $k_{j,N}$.
Differently, when $\beta_j>\alpha$, we will observe initially a superlinear $O(N^{\beta_j/\alpha})$ number of particles at $k_{j,N}$ due to the relative entropy bound assumption.
This level of condensation will not evolve much for $t>0$ and, as a result, leads to a macroscopic boundary condition $\rho(t,x_j) = c_0$ where $c_0$ is the parameter of the reference measure 
$\Pam_{c_0}^N$, reflecting a level of initial condensation at $x_j$. The derivation of such Dirichlet boundary condition at super-slow sites ($\beta_j>\alpha$), to our knowledge, is novel.

Taken together, the hydrodynamic limit $\pi_t$ on $\T$ will be in form
$$\pi_t = \rho(t,x)dx + \sum_{j: \beta_j=\alpha} \mathfrak{m}_j(t)\delta_{x_j}(dx), \quad\text{for $t>0$.} $$
 At time $t=0$, $\pi_t$ reduces to $\pi_0$ mentioned earlier.
 In general, the PDE \eqref{hyd_lim_intro} for the bulk mass $\rho$ will not be closed without $\m_j(t)$ being specified.
In fact, the limit $\pi_t$ is characterized as the unique weak solution to a system in terms of $(\rho(t,x), \{\mathfrak{m}_j(t)\}_j)$:
\begin{equation} \label {eqn: hyro_eqn_pi_t}
\partial_t \pi_t = \partial_{xx} \Phi(\rho)  
\end{equation}
with boundary behaviors of $\rho(t,x)$ at slow sites $x_j$ given earlier;
see Definition \ref{def: weak sln} and uniqueness Theorem \ref{thm: uniqueness}.  See Example \ref{sec: single slow site}, for a specific discussion when there is only one defect in the system.
We mention in \cite{JLT}, when $g(n)\equiv n$
($\alpha=1$), that is for independent particles, a case of the above hydrodynamic limit may be inferred when $\beta_j \equiv \alpha =1$, with $\Phi(u)\equiv u$.

\subsubsection{Bounded rates $g$}
Our second result (Theorem \ref{main thm: bounded g}) concerns the process with 
a rate function $g(n)$, bounded and
increasing say to level $g_\infty = 1$ in the limit as $n\uparrow\infty$.
Slow sites $k_{j,N}$ that we consider will be those where the jump rate is $\tfrac{1}{\lambda_j N^{\beta_j}}g(n) = \tfrac{1}{\lambda_j}g(n)$, when $\beta_j=0$ and $\lambda_j>1$, 
when there are $n$ particles at $k$.  As before, with a finite number of slow sites $k_{j,N}$, there is a family of product invariant measures $\{\Pam^N_c\}$, but now with densities $c$ limited to $0\leq \max_j\{\lambda_jN^{\beta_j}\vee 1\}\Phi(c)\leq g_\infty$.   There is no non-trivial product invariant measure for $c$ above this level.  With this understanding, it is natural to focus on $\beta_j= 0$, as if we would slow down with 
$\beta_j>0$, there would be no such non-trivial invariant measure, as the fugacity $\lambda_j N^{\beta_j}\Phi(c)$ would exceed $g_\infty$ for a finite $N$.

Phenomenologically, the slow site effect is different here than when $g(n)\sim n^\alpha$ 
in that the system and in particular the slow sites under $\Pam^N_c$ have at most $O(N)$ particles on them.  As before, between macroscopic defects $\{x_j \sim k_{j,N}/N\}$, the hydrodynamic limit satisfies \eqref{hyd_lim_intro}.  An atom of mass $\mathfrak{m}_j(t)$ may also form though at macroscopic site $x_j \sim k_{j,N}/N$ where $\beta_j=0$ and $\lambda_j>1$.

However, unlike in the case 
$g(n)\sim n^\alpha$, this atomic mass may be evanescent:  We show that, at such a defect $x_j$, the hydrodynamic density $\rho(t,x_j)$ satisfies a bound
$\rho(t,x_j) \leq c_{j,\max}$ where 
$c_{j,\max}:=\Phi^{-1}(g_\infty/\lambda_j)$ for all $t>0$.
When this inequality is strict, it holds necessarily that $\mathfrak{m}_j(t)=0$. 
In particular, the boundary condition near $x_j$ may bounce between being periodic and Dirichlet. 

For an informal description, suppose that $\rho(t,x_j)$ is strictly less than $c_{j,\max}$ at $t=t_0$ and
$\m_j(t_0)=0$.
Here, the macroscopic flow will evolve as if the defect at $x_j$ were not present,
namely, we will observe a periodic boundary condition at $x=x_j$.
This will last until the first moment $t=t_1$ when the density $\rho(t,x_j)$ tends toward exceeding the threshold value $c_{j,\max}$.
The defect is then `set off' to keep $\rho(t,x_j)$ at the value $c_{j,\max}$, with any excess mass stored at the defect to form a delta mass.
In other words, we start to observe  $\m_j(t)>0$ as well as a Dirichlet boundary condition $\rho(t,x_j)=c_{j,\max}$ after $t=t_1$. Besides being a place of storage, the slow site $x_j$ also serves as a source of mass to maintain the mentioned Dirichlet boundary when needed as long as $\m_j(t)>0$. When the stored mass is used up (when $\m_j(t)$ becomes $0$) and the density $\rho(t,x_j)$ tends to drop below $c_{j,\max}$, say at $t=t_3$, the boundary condition switches to the periodic one again after $t=t_3$. Such a `bouncing' phenomenon between boundary conditions takes place at all slow sites.
 Finally, characterization of the limit 
$$\pi_t = \rho(t,x)dx + \sum_{j: \beta_j=0, \lambda_j>0} \mathfrak{m}_j(t)\delta_{x_j}(dx)$$
 is given through a weak formulation \eqref{eqn: hyro_eqn_pi_t} with boundary conditions described earlier, shown to have a unique solution; see Definition \ref{def: weak sln, bounded g} and uniqueness Theorem \ref{thm: uniqueness, bounded g}.

We remark that a sufficient condition to not feel the defects $\{x_j\}$ in the limit would be to start the process from initial (local equilibrium) measures $\mu^N$ associated to $\pi_0(dx)=\rho_0(x)dx$, where $\|\rho_0\|_\infty< \Phi^{-1}\big([\max_j\{\lambda_j\}]^{-1}\big)$, so that by say attractiveness $\rho(t,x)$ also satisfies this bound.  On the other hand, a sufficient condition so that atom masses form is that the initial density $\rho_0(x)> \Phi^{-1}\big([\max_j\{\lambda_j\}]^{-1}\big)$ on $\T$:  Indeed, if no atoms are formed, by the maximum principle, $\rho(t,x)>  \Phi^{-1}\big([\max_j\{\lambda_j\}]^{-1}\big)$ on $\T$, a contradiction.  We believe the results in Theorem \ref{main thm: bounded g} are the first for a general class of bounded symmetric zero-range processes with defects.

\subsection{
Proof ideas for Theorems \ref{main thm: k alpha} and \ref{main thm: bounded g}}  We now discuss ideas in the proofs.  We use the general scheme of the `entropy' method, discussed in \cite{KL}, although there are several departures since the dynamics is not translation invariant.  In particular, mixing in time estimates are used to make microscopic to macroscopic homogenizations in the `bulk'.  However, to capture the boundary effects and correspondences with macroscopic mass of atoms, we need to estimate the local behavior near defects, for which we give a more refined argument.

  Let $G$ be a test function on $\T$ when $g$ is bounded, and with compact support away from super-slow sites $x_j\in \D_s$ when $g\sim n^\alpha$.  Consider the stochastic differential
  $$d\langle G, \pi^N_t\rangle = \frac{1}{N}\sum_{k\in \T_N} \Delta_N G\big(k/N\big) g_k(\eta_t(k)) dt + dM^N(t).$$
  Here, $g_k = \tfrac{1}{\lambda_j N^{\beta_j}}g$ when $k=k_{j,N}$ and $g_k=g$ otherwise.
  Also, $M^N(t)$ is a martingale, which will vanish as $N\uparrow\infty$. 
  We state a `bulk' replacement in Lemma \ref{rplmt_global_lem}.  The proof that we give makes use of local replacements, as these will be useful to deduce boundary behaviors at the defects.   
  
In particular, we may replace the local function $g(\eta_t(k))$ at regular sites $k$ by $\Phi\big( \eta^{\theta N}_t\big)$ where $\eta^{\ell}$ is the $\ell$-window average $\frac{1}{2\ell+1} \sum_{|y-k|\leq \ell}\eta(y)$, by use of an entropy inequality and a `Rayleigh' estimate.  The `Rayleigh' estimate controls the difference between expectations of a local function under non-equilibrium and equilibrium measures in terms of a Dirichlet form and a term written in terms of the spectral gap of the process localized to an interval.  No special bound is required--this gap needs only to be positive, or equivalently that the localized dynamics is ergodic.  We remark, as a technical device, attractiveness is used to handle large densities, difficult to analyze otherwise, in the proofs of these local replacement, `1-block' Lemma \ref{lem: 1 block} and `2-block' Lemma \ref{lem: 2 blocks}.  As a consequence of these estimates, the hydrodynamic equation \eqref{hyd_lim_intro} may be derived in the `bulk'. We now discuss derivation of the boundary conditions in the two settings.

\subsubsection{Boundary derivations for $g(n)\sim n^\alpha$}  In the setting where 
$g(n)\sim n^\alpha$
at a defect $k_{j,N}$, when $0<\beta_j<\alpha$, under $\mu^N$, the number of  
particles $\eta_t(k_{j,N}) = O(N^{\beta_j /\alpha})$ is sublinear in $N$.  Hence, with respect to the empirical measure, the term 
$$
\langle G, N^{-1} \delta_{k_{j,N}/N} \rangle
=
N^{-1}\eta_t(k_{j,N}) G(k_{j,N}/N)
\leq \|G\|_\infty N^{-1}\eta_t(k_{j,N}) = O(N^{-1+\beta_j/\alpha})$$
vanishes as $N\uparrow\infty$.  In effect, we can ignore such defects in the continuum limit.

However, when $\beta_j\geq \alpha$, to deduce a non-trivial boundary condition, we consider the rate $g_{k_{j, N}}(\eta_t(k_{j,N}))=\tfrac{1}{\lambda_j N^{\beta_j}}g(\eta_t(k_{j,N})$ at the slow site $k_{j,N}$.  The following discussion leading to \eqref{boundary_intro} will hold actually for both $g$ of $n^\alpha$ type and $g$ bounded settings.  To fix ideas, let us say $k_{j,N}=0$, the origin.  Under $\mu^N$, not far from the reversible $\Pam^N_{c_0}$ in terms of relative entropy, we show that this rate is close to the neighboring rate $g(\eta_t(1))$ say; see Lemma \ref{replacement: slow site}.  By the local `1-block' estimate, we show that $g(\eta_t(1))$ is close to $\Phi(\eta^{\ell, +}_t(0))$, where $\eta_t^{\ell, +}(0)= \frac{1}{\ell}\sum_{y=1}^\ell \eta_t(y)$ is the average in the $\ell $-block to the right of the defect site $0$; see Lemma \ref{local replace, slow site}.  Now, by the local `2-block' bound, we have $\Phi(\eta^{\ell, +}_t(0))$ is close to the macroscopic quantity $\Phi((\eta^{\theta N, +}_t(0))$ for $\theta>0$ small.  Together, we conclude the fundamental relation,
\begin{equation}
\label{boundary_intro}
\Phi(\eta^{\theta N, +}_t(0)) \sim g_0(\eta_t(0)).
\end{equation}

After establishment of tightness of the empirical measure trajectories, and absolute continuity estimates in Lemmas \ref{lem: tightness} and \ref{lem: abs continuity}, we may consider 
a subsequential limit $\pi$ on $\T\setminus \D_s$ in the form $\pi = \rho(t,x)dx + \sum_{j: \beta_j=\alpha} \mathfrak{m}_j(t)$.  From \eqref{boundary_intro}, one has for this limit point that
$$\Phi(\rho(t, 0)) \sim g_0(\eta_t(0)).$$

When $\beta_j = \alpha$, we have 
$$\Phi(\rho(t, 0)) 
\sim g_0(\eta_t(0)) 
\sim \big(N^{-1}\eta_t(0)\big)^\alpha \lambda_j^{-1} 
\sim \big(\m_j(t)\big)^\alpha \lambda_j^{-1},$$
which is the boundary condition relating the atom strength to the local density in Theorem \ref{main thm: k alpha}; see Lemma \ref{lem: bdry at critical}.

When $\beta_j>\alpha$, we may take the same passage to obtain
$$\Phi(\rho(t, 0))
 \sim g_0(\eta_t(0)) 
\sim \big (N^{-\beta_j/\alpha} \eta_t(0)\big)^\alpha \lambda_j^{-1}.$$
But, in our time-scale, we show $N^{-\beta_j/\alpha}\eta_t(0)$ does not vary much from the time $t=0$ quantity $N^{-\beta_j/\alpha}\eta_0(0)$.  For instance, if $k_{j,N}=0$ were the only defect in the system, by mass conservation, 
$$|\eta_t(0)-\eta_0(0)| = \big|\sum_{k\neq 0} \big(\eta_t(k) - \eta_0(k)\big)\big|.$$
Starting under $\mu^N$, by use of the entropy inequality, $\sum_{k\neq 0}\eta_t(k) = O(N)$ for all $t\geq 0$.  Hence, 
$$N^{-\beta_j/\alpha}\big | \eta_t(0) - \eta_0(0)\big | = O(N^{-\beta_j/\alpha +1})$$
vanishes as $N\uparrow\infty$.  Moreover, by a calculation with respect to the initial measure $\mu^N$, we may show that 
$$\big(N^{-\beta_j/\alpha}\eta_0(0)\big)^\alpha \sim \lambda_j\Phi(c_0).$$
As a consequence, we have 
$$\Phi(\rho(t,0)) \sim \big(N^{-\beta_j/\alpha}\eta_0(0)\big)^\alpha \lambda_j^{-1} \sim \Phi(c_0),$$
yielding the boundary behavior in Theorem \ref{main thm: k alpha};
see Lemma \ref{lem: boundary cond, super} which addresses the multiple defect setting.

\subsubsection{Boundary derivations for bounded rate $g$} 
We now discuss the case of bounded rate function $g$: $\|g\|_{\infty}=g_\infty<\infty$.  There are subtleties here with respect to boundary estimates, different than in the $n^\alpha$ rate setting, although the bulk hydrodynamic limit between defects $x_j\sim k_{j,N}/N$ follows the same procedure as above to derive the equation \eqref{hyd_lim_intro}.

At a slow site $k_{j,N}$, say again at the origin, 
slowed down by $\lambda_j^{-1}$ (and $\beta_j=0$), we deduce
$$\Phi(\rho(t,0)) \sim \frac{1}{\lambda_j} g(\eta_t(0)) \leq \frac{g_\infty}{\lambda_j}.$$
Hence, the density near the slow site $x_j=0$ is restricted,
$$\rho(t,0) \leq \Phi^{-1}(g_\infty / \lambda_j).$$
When the restriction is strict, the intuition is that the particle mass is not high enough to impact much the 
flow of particle numbers through the slow site $0$.  However, when equality in the above relation holds, 
an atom at the slow site $0$ may form to store excess mass while maintaining the boundary condition.  More precisely, we have $\m_j(t)$, the atomic mass accumulated by $x_j=0$, satisfies $\m_j(t)=\m_j(t) \id_{ \rho(t,0) = \Phi^{-1}(g_\infty/\lambda_j)}$; this is established by showing a microscopic version of the following key relation
\[
\m_j(t) \big(\Phi^{-1}(g_\infty / \lambda_j) - \rho(t,0) \big) = 0;
\]
see Lemma \ref{lem: boundary, bounded}.   
These prescriptions give the boundary conditions specified in Theorem \ref{main thm: bounded g}.

\subsection{Paper outline} The plan of the paper is as follows:  In Section \ref{sec: model} and \ref{sec: initial measures}, we introduce carefully the zero-range model with defects, their invariant measures, and the initial measures considered.  In Section \ref{sec: results}, we state results; see Section \ref{sec: single slow site} for a discussion of the example when there is only one defect in the system.  In Section \ref{section: martingale}, we give the proof outline of the main results, Theorems \ref{main thm: k alpha} and \ref{main thm: bounded g}, referring to estimates in the sequel.  In Section \ref{sec: tightness}, we discuss tightness of the empirical measures.  In Section \ref{sec: properties of limit measures}, we discuss properties of limit points, including importantly their boundary behaviors near macroscopic defects.  In Section \ref{sec: rplmt lem}, we show local `1-block' and `2-block' replacements, and as a consequence `bulk' replacement.  In Section \ref{sec: rplmt lem at bdry}, we discuss replacements near the boundaries of defects needed to derive the macroscopic boundary conditions.  In Sections \ref{sec: energy section} and \ref{section: uniqueness}, we derive energy estimates and prove uniqueness theorems for the weak formulations.

\section{Model description}
\label{sec: model}

We will consider symmetric zero-range processes on the discrete torus $\T_N:= \Z/N\Z = \{0,1,\ldots,N-1\}$ with a finite number $n_0\geq 0$ of defects located in $\T_N$.
We will always assume that $N>n_0$ so that there is enough space in $\T_N$ to record the defects.

More carefully, the structure of the defects is the following:  
Let $J$ be the index set $\{1,2, \ldots, n_0\}$.
For each $j\in J$, fix $(x_j,\beta_j, \lambda_j)$ such that 
$x_j\in \T$, $\beta_j\in \R$ and $\lambda_j\in (0,\infty)$. 
Here, $\T$ stands for $[0,1)$ viewed as the unit torus.
We will assume that all $x_j$'s are different, that is macroscopically separated.
For each $j$, the point $x_j$ denotes the macroscopic location of a defect and $(\beta_j, \lambda_j)$ characterizes its strength.
Let $\D:= \{x_j\}_{j\in J}$ be the set of all macroscopic defect locations.
For each $j\in J$ and $N\in \N$,
we now define $k_{j,N} = \lfloor x_j N \rfloor$ be the integer part of $x_j N$.
Then, the set $\D_N := \{k_{j,N}\}_{j\in J}$
stands for the set of microscopic locations of the defects. 

Let now $\N_0 := \{0\}\cup \N$.
For each $N$, let $\Omega_N:=\N_0^{\T_N}$ be the space of all particle configurations on $\T_N$.
With respect to $\xi\in \Omega_N$,
at a normal site $k\in \T_N\setminus \D_N$, a particle jumps to neighboring sites $k\pm1$ equally likely at rate $g(\xi(k)) / \xi(k)$ where $\xi(k)$ is the number of particles at site $k$, and $g:\N_0 \mapsto  [0,\infty)$ is a jump rate function. At a defect site $k=k_{j,N}$, the jump rate is $(\lambda_j N^{\beta_j})^{-1} g(\xi(k)) / \xi(k)$. 
In particular, the site $k_{j,N}$ is a slow site if $\lambda_j N^{\beta_j}>1$ and a fast or normal site if $\lambda_j N^{\beta_j}\leq1$
for $N$ large.  Let
\[
g_{k,N} (\cdot)
=
\begin{cases}
g(\cdot)&k\in\T_N \setminus \D_N,\\
\dfrac {g(\cdot)} {\lambda_j N^{\beta_j}}
&k=k_{j,N}, \ j\in J.
\end{cases}
\]

The zero-range process is a Markov process $\xi_t$ with the generator
\begin{equation}
\label{eqn: generator L}
\begin{split}
L_N f(\xi) =
\sum_{k\in \T_N} 
\Big\{
g_{k,N}(\xi(k))
\big(f(\xi^{k,k+1}) - f(\xi) \big)
+
g_{k,N}(\xi(k)) 
\big(f(\xi^{k,k-1}) - f(\xi) \big)
\Big\} 
\end{split}
\end{equation}
where
\begin{equation} \label {def: xi xy}
\xi^{x,y} (k) 
=
\begin{cases}
\xi(x) -1& k=x,\\
\xi(y) +1& k=y,\\
\xi(k) & k\neq x,y.
\end{cases}
\end{equation}

So that the process is irreducible, we will assume that the jump rate function $g$ is such that $g(n)=0$ exactly when $n=0$.  We will also assume the following condition.
\begin{Cond}
\label{condition on g}
The jump rate function $g(\cdot)$ satisfies
\begin{enumerate}
\item Lipschitz: there exists $g^*>0$ such that $|g(n+1) - g(n)| \leq g^*$ for all $n\in \N_0$.
\item Power Interaction: there exists $\alpha \in [0,1]$ such that $g(n)\sim n^\alpha$, that is
\[
\lim_{n\to \infty} \dfrac{g(n)}{n^{\alpha}} = 1.
\]
\item Monotonicity: $g(n)\leq g(n+1)$ for all $n\in \N_0$.
\end{enumerate}
\end{Cond}

When the constant $\alpha=0$, in the power interaction condition, we note the function $g(n)$ increases to $g_\infty := 1$ as $n \to \infty$. We will refer to this case as 
`$g$ is of bounded type'
 or simply, $g$ is {\it bounded}.
On the other hand, when $\alpha\in (0,1]$, we will say 
`$g$ is of  $n^\alpha$ type'.

We remark that, when $g(n) \equiv n$, the dynamics is non-interactive, a superposition of independent random walks.  The Lipschitz condition, which rules out the case $\alpha>1$, is convenient to bound $g(n)$ by $g^*n$, and in using the entropy inequality in places.
Moreover, the assumption that $g$ is  increasing, an `attractive' dynamics assumption, allows use of the `basic coupling' in our proofs; for more discussion of this point; see Section \ref{sec: coupling}.

Also, as discussed in the introduction, we will suppose in the $g$ bounded setting that, at a defect site $k_{j,N}$, the parameter $\beta_j\leq 0$.  Such a condition ensures existence of a non-trivial family of invariant measures; see Section \ref{sec: invariant measures}.

\begin{Cond}
\label{no J_s in bounded}
In the $g$ bounded setting, we will assume $\beta_j\leq 0$ for a defect site $k=k_{j,N}$.
\end{Cond}

Our goal in this work is to study hydrodynamic limits of the dynamics generated by $L_N$. In particular, 
in the limit macroscopic flow, there will be different behaviors at the defect locations depending on the associated strength parameters. 
To prepare for these statements, it will be helpful to introduce a partition on the index set $J = J_s\cup J_c \cup J_b$. Here, the subscripts $s$, $c$, and $b$ stand for super-critical or `super-slow', critical, and sub-critical respectively.

There will be different partitions of $J$ depending on whether $g$ is of $n^\alpha$ type or bounded.
When $g$ is of $n^\alpha $ type, we let
\begin{itemize}
\item $J_s := \{j\in J: \beta_j >\alpha \}$,
\item $J_c := \{j\in J: \beta_j =\alpha \}$, and 
\item $J_b := \{j\in J: \beta_j <\alpha \}$.
\end{itemize}

When $g$ is bounded, we take 
\begin{itemize}
\item $J_s := \{j\in J: \beta_j >0 \}$ (= $\emptyset$), 
\item $J_c := \{j\in J: \beta_j =0, \ \lambda_j>1\}$, and
\item $J_b := \{j\in J: \beta_j <0 \text{ or } \beta_j=0 \text{ with }\lambda_j < 1 \}$.
\end{itemize}

In terms of the partition of $J$, the sets $\D$ and $\D_N$, the macroscopic and microscopic locations of the defects, are then divided into corresponding  subsets.
For example, we have $\D_s:= \{x_j\in \D: j\in J_s\}$ and 
$\D_{s,N}:= \{k_{j,N} \in \D_N: j\in J_s\}$. 
The sets $\D_{c}$, $\D_{b}$, $\D_{c,N}$, and $\D_{b,N}$ are defined in the same fashion.

 \subsection{Invariant measures}
 \label{sec: invariant measures}
 The construction of invariant measures under 
$L_N$ is based on  $\{\Pb_{\phi}\}$, a
family of Poisson-like distributions indexed by `fugacities' $\phi$.
In order to define $\Pb_\phi$, we first introduce the partition function:
\[
Z(\phi) := \sum_{n=0}^\infty  \dfrac{\phi^n} {g(n) !}.
\]
Let $r_g$ be the convergence radius of $Z(\cdot)$. 
It holds that $r_g = \lim_{n\to \infty} g(n)$.
In particular, when $g$ is of $n^\alpha $ type, we have $r_g = \infty$, namely, the ``FEM" condition (cf.\,p.\,69, \cite{KL}) is satisfied.
When $g$ is bounded, $r_g=g_{\infty} =1$.
In either case, it holds that $\lim_{\phi \uparrow r_g} Z(\phi) = \infty$.

For each $\phi\in [0,r_g)$, define $\Pb_{\phi}$ by
\begin{equation}
\label{Pb def}
\Pb_{\phi}(n) =
\dfrac{1}{Z(\phi)} \dfrac{\phi^n} {g(n) !},
\quad \text{for } n\geq 0.
\end{equation}
Here, $g(0)! :=1$ and $g(n)! :=\prod_{k=1}^n g(k)$ for $n\geq 1$.
Let $R(\phi) = E_{\Pb_\phi} [ X ]$, where $X(n) = n$, be the mean of
the distribution $\Pb_\phi$. A direct computation yields that
$R(0)=0$ and $\lim_{\phi\uparrow r_g} R(\phi) = \infty$.
Moreover, it holds $R'(\phi)= \sigma^2(\phi) / \phi$
and $\lim_{\phi\downarrow 0} R'(\phi) =1/ g(1)$
 where
$\sigma^2(\phi) = \V_{\Pb_{\phi}}[X]$ is the variance of $X$ under $\Pb_\phi$.
 
 Since
$R$ is strictly increasing, it has an inverse, denoted by
$\Phi: [0,\infty) \mapsto [0,r_g)$.  We may parametrize the family of
distributions $\Pb_{\phi}$ by their means: For $\rho\ge 0$, let
$\Qb_\rho = \Pb_{\Phi(\rho)}$, so that
$E_{\Qb_\rho} [ X ] = E_{\Pb_{\Phi(\rho)}} [ X ] = R(\Phi(\rho)) =
\rho$.
A straightforward computation yields that
$E_{\Pb_\phi} [ g(X) ] = \phi$ for $\phi\ge 0$. Thus,
\begin{equation}
\label{Phi_eqn}
\Phi(\rho) = E_{\Pb_{\Phi(\rho)}} [ g(X) ]
= E_{\Qb_{\rho}} [ g(X) ], \quad \rho\,\ge\, 0.
\end{equation}
As $g(n) \leq g^*n$, we have that
$\Phi(\rho) \leq g^*\rho$. 
As $\Phi(\cdot) = R^{-1}(\cdot)$, we have $\Phi'(\rho) = \Phi(\rho)/\sigma^2(\Phi(\rho))$.
Under our assumptions on $g$, in fact, it holds that $0\leq \Phi'(\rho)\leq g^*$
for all $\rho\ge 0$ (cf.\,p.~33, \cite{KL}). In particular,
$\Phi\in C^1[0,\infty)$ is an increasing function with a uniformly bounded derivative.
We note, in the case $g(n)\equiv n$, that $\Phi(\rho)\equiv \rho$ and $\Pb_\phi$ is a Poisson measure with mean $\phi$. 

We now introduce the invariant measures.
For each $N$, let 
$$q_N = \max \{1,\lambda_j N^{\beta_j} | j\in J\}.$$

For $c$ so that $\Phi(c) \in [0,r_g/q_N)$, denote by $\Pam_c^N$ the product measure on $\Omega_N$ whose
marginals are given by
\begin{equation}
\label {Pam c marginal}
\Pam_c^N (\xi(k) = n) = 
\begin{cases}
\Pb_{\Phi(c)}(n) & \text{ for } k\in\T_N \setminus \D_N  \text{ and } n\geq 0,\\
\Pb_{\lambda_j N^{\beta_j} \Phi(c)}(n) & \text{ for } k=k_{j,N}, \ j\in J \text{, and } n\geq 0.
\end{cases}
\end{equation}

Notice that the condition $\Phi(c) \in [0,r_g/q_N)$ is needed since the distributions $\{\Pb_\phi\}$ are defined for $\phi\in [0,r_g)$. 
When $g$ is of $n^\alpha $ type, as $r_g = \infty$, we have that $\{\Pam_c^N\}$ are defined for all  $c\in[0,\infty)$.
However, when $g$ is bounded, we have $r_g = g_\infty=1$, and thus the measures $\{\Pam_c^N\}$ are defined only for $c\in [0,R(1/ q_N))$.  Moreover, in the $g$ bounded setting, we have that $r_g/q_N$ is bounded away from $0$ for all large $N$ exactly when $J_s=\emptyset$, that is $\beta_j\leq 0$, the reason for the assumption $J_s=\emptyset$ in this case (cf.~Condition \ref{no J_s in bounded}).
 
With $\Pam_c^N$ defined, it is straightforward (cf.~\cite{A}, \cite{EH}) to check the following lemma.
\begin{Lem}
For $c$ so that $\Phi(c) \in [0,r_g/q_N)$, 
 $\Pam_c^N$ is invariant and reversible  with respect to the generator $L_N$ in \eqref{eqn: generator L}. 
 \end{Lem}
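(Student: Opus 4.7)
Since $\Pam_c^N$ is a product measure and the generator $L_N$ decomposes into a sum of bond generators $L_{x,y}$ for nearest-neighbor pairs $\{x,y\}\subset\T_N$, my plan is to verify that $\Pam_c^N$ satisfies the detailed balance equation on each bond. Reversibility with respect to each bond will imply reversibility with respect to $L_N$, and in particular invariance. For a nearest-neighbor pair $(x,y)$ with $y = x\pm 1$, the detailed balance condition for the jump $\xi\mapsto \xi^{x,y}$ reads
\[
\Pam_c^N(\xi)\,g_{x,N}(\xi(x)) \;=\; \Pam_c^N(\xi^{x,y})\,g_{y,N}(\xi(y)+1).
\]

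To check this, introduce the local fugacity at site $k$, namely $\phi_k = a_k\Phi(c)$, where $a_k = \lambda_j N^{\beta_j}$ if $k = k_{j,N}$ for some $j\in J$ and $a_k=1$ otherwise. Then by \eqref{Pam c marginal}, the marginal at site $k$ is $\Pb_{\phi_k}$, and from the definition \eqref{Pb def} one has the key one-step identity
\[
\Pb_{\phi}(n)\,g(n) \;=\; \Pb_{\phi}(n-1)\,\phi\qquad (n\ge 1).
\]
Applying this at sites $x$ and $y$ and using that $\Pam_c^N$ is a product measure, the ratio $\Pam_c^N(\xi^{x,y})/\Pam_c^N(\xi)$ simplifies to $g(\xi(x))\phi_y / \bigl(\phi_x\,g(\xi(y)+1)\bigr)$. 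Substituting this into the detailed balance equation and recalling $g_{k,N}(\cdot)=g(\cdot)/a_k$, the condition reduces after cancellation to
\[
\frac{\phi_x}{a_x}\;=\;\frac{\phi_y}{a_y},
\]
which holds trivially by the construction $\phi_k = a_k\Phi(c)$, both sides equalling $\Phi(c)$.

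There is essentially no obstacle here; the only point to note is that one must check the condition $\Phi(c)\in[0,r_g/q_N)$ is exactly what ensures that the fugacities $\phi_k=a_k\Phi(c)\le q_N\Phi(c)<r_g$ at every site lie in the domain of definition of $\Pb_{\phi}$, so that each marginal in \eqref{Pam c marginal} is a well-defined probability distribution. Since detailed balance holds on every bond, $\Pam_c^N$ is reversible with respect to each $L_{x,y}$, hence with respect to $L_N$, and thus also invariant.
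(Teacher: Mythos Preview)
Your proof is correct and is precisely the standard detailed-balance verification that the paper defers to the references \cite{A}, \cite{EH} without spelling out. You have simply filled in the computation the paper calls ``straightforward''; there is nothing to add or correct.
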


 \subsection{Static limit} 
 \label {sec: static limit}
 Before studying the hydrodynamic limits, it will be useful to understand the particle mass behavior under an invariant measure $\Pam^N_c$.
 For a configuration $\xi \in \Omega_N$, 
 define the associated scaled mass empirical measure:
\begin{equation}
\label{eqn: pi_N, hat}
\wpi^N(dx)
:=
\dfrac 1 N 
\sum_{k\in \T_N}
\xi (k) \delta_{k/N}(dx).
\end{equation}
In this formulation, each particle has mass $N^{-1}$.  Here and in the sequel, $\delta_z$ refers to a delta point mass at $z$.

For a test function $G\in C(\T)$, let 
\[
 \langle G, \wpi^N\rangle 
 :=
N^{-1} \sum_{k\in \T_N} \xi(k) G(k/N).
\]
More generally, we will use the notation $\langle G, \mu\rangle :=\int G d\mu$.
We now compute the limit of 
$\langle G, \wpi^N\rangle$ with respect to a sequence of  invariant measures $\Pam_c^N$ with $c$ fixed as $N\to \infty$.

  We assume first $g$ is of $n^\alpha$ type
  and $c> 0$.   
  Because of the product structure of $\Pam_c^N$, 
  $\{\xi(k) \}_{k\in\T_N}$ are independent
and have a commmon marginal $\Pb_{\Phi(c)}$ for all $k\neq \D_N$. As $\Pb_{\Phi(c)}$ has expectation $c$ and finite variance, we have 
$N^{-1} \sum_{k\notin \D_N} \xi(k) G(k/N) $
converges in probability to 
$\int_\T G(x) c \,dx$ as $N\to \infty$.

It remains to investigate the behavior of $N^{-1} \xi(k)$ for $k=k_{j,N}\in \D_N$.
As $\xi(k_{j,N})$ has distribution $\Pb_{\lambda_j N^{\beta_j} \Phi(c)}$, by the later Lemma \ref{lem: order of R}, for all $\beta_j>0$,
\[
 E_{\Pam_c^N }
[\xi(k_{j,N})] 
\sim
 (\lambda_j \Phi(c))^{1/\alpha}
 N^{\beta_j/\alpha},
\quad\text{and}\quad
 \V_{\Pam_c^N } 
[\xi(k_{j,N})] 
=
o(N^{2\beta_j/\alpha}).
 \]
 Then, according to the value of $\beta_j$, there are three different types of behaviors at $k\in \D_N$:
 \begin{enumerate}
\item
if $\beta_j < \alpha$,  
$N^{-1} \xi(k_{j,N})\to 0$ in probability;

\item
 if $\beta_j = \alpha$, 
  $N^{-1} \xi(k_{j,N})\to \left(\lambda_j \Phi(c)\right)^{1/\alpha}$ in probability;

\item 
 if $\beta_j > \alpha$,  
$N^{-\beta_j/\alpha}
 \xi(k_{j,N}) \to  \left(\lambda_j \Phi(c)\right)^{1/\alpha}$
  in probability.
\end{enumerate}
In other words, the defect site $k_{j,N}$ becomes macroscopically invisible when $\beta_j< \alpha$ as typically it contains $o(N)$ number of particles and each particle has mass $N^{-1}$.
In the case $\beta_j = \alpha$, typically the number of particles at $k_{j,N}$ is of order $N$ and a delta mass of magnitude $ \left(\lambda_j \Phi(c)\right)^{1/\alpha}$ emerges. When the site is super-slow, that is $\beta_j > \alpha$, the particle number at $k_{j,N}$ is of order $N^{\beta_j/\alpha}$, corresponding to an infinite macroscopic mass. Recall, the partition $J = J_b\cup J_c\cup J_s$ in Section \ref{sec: model} matches with this classification of the $\beta_j$'s.

As the macroscopical mass at $x_j\in \D_s$ explodes, to consider the remaining mass, we define microscopic empirical measures which exclude the super-critical defect set $\D_{s,N}$:
\begin{equation} \label{eqn: pi_N}
\pi^N(dx)
:=
\dfrac 1 N 
\sum_{k\in \T_N\setminus \D_{s,N}}
\xi (k) \delta_{k/N}(dx).
\end{equation}
Then, we may summarize the above discussion as follows.

\begin{Prop} \label {static limit of pi}
Assume $g$ is of $n^\alpha$ type. Then,
for any $G\in C(\T)$, $c\geq 0$, and $\delta>0$:
\[
\lim_{N\to \infty} \Pam_c^N
\big[ \, \big|
\langle G, \pi^N\rangle
-
\langle G, \pi\rangle
\big|
>\delta
\, \big]
 =0
\]
where $\pi(dx) = c\,dx + \sum_{j\in J_c} \left(\lambda_j \Phi(c)\right)^{1/\alpha} \delta_{x_j}(dx)$.
Moreover, for all $j\in \D_s$, we have
\[
\lim_{N\to \infty} \Pam_c^N
\big[ \, \big|
N^{-\beta_j/\alpha}  \xi(k_{j,N})
-
\left(\lambda_j \Phi(c)\right)^{1/\alpha}
\big|
>\delta
\, \big]
 =0.
\]

\end{Prop}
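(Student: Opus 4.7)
The proposition is essentially a law of large numbers under the product measure $\Pam_c^N$, combined with case analysis of the defect sites via the asymptotics of $R(\phi)$ recorded in Lemma \ref{lem: order of R}. The case $c=0$ is trivial since then $\Phi(c)=0$ and all marginals are $\delta_0$, so I focus on $c>0$. The plan is to decompose
\[
\langle G, \pi^N\rangle \;=\; S_{\rm reg} \;+\; S_b \;+\; S_c,
\]
where $S_{\rm reg} = N^{-1}\sum_{k\in \T_N \setminus \D_N} \xi(k) G(k/N)$, and $S_b$, $S_c$ collect the contributions of defect sites $k_{j,N}$ with $j\in J_b$ and $j\in J_c$ respectively. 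Since $\{\xi(k)\}_{k\in \T_N}$ are independent under $\Pam_c^N$, each piece is treated by computing mean and variance and then invoking Chebyshev's inequality. I expect the main obstacle to be estimating the mean and variance of $\xi(k_{j,N})$ when the fugacity $\lambda_j N^{\beta_j}\Phi(c)$ diverges (the critical and super-critical defects), where the asymptotics from Lemma \ref{lem: order of R} will be essential.

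For $S_{\rm reg}$, the marginals $\xi(k)\sim \Pb_{\Phi(c)}$ have mean $c$ and finite variance $\sigma^2(\Phi(c))$ independent of $N$. Hence $E[S_{\rm reg}] = cN^{-1}\sum_{k\notin \D_N} G(k/N)$ converges, by a Riemann sum approximation and since $|\D_N|\leq n_0$ is fixed, to $c\int_\T G(x)\,dx$, while $\V[S_{\rm reg}] = N^{-2}\sigma^2(\Phi(c)) \sum_{k\notin \D_N} G(k/N)^2 = O(N^{-1}\|G\|_\infty^2) \to 0$. Chebyshev yields $S_{\rm reg}\to c\int_\T G\,dx$ in probability. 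For $S_b$, given $j\in J_b$ (so $\beta_j<\alpha$), Lemma \ref{lem: order of R} implies $E[\xi(k_{j,N})]=O(N^{\beta_j/\alpha}\vee 1)$ and $\V[\xi(k_{j,N})]=o(N^{2\beta_j/\alpha}\vee 1)$, and therefore $N^{-1}\xi(k_{j,N}) \to 0$ in probability. Since $J_b$ is finite, $S_b\to 0$ in probability.

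For $S_c$, each $j\in J_c$ has $\beta_j=\alpha$, so the fugacity $\lambda_j N^\alpha \Phi(c)$ diverges and Lemma \ref{lem: order of R} gives $E[\xi(k_{j,N})]\sim (\lambda_j\Phi(c))^{1/\alpha}N$ and $\V[\xi(k_{j,N})]=o(N^2)$. By Chebyshev, $N^{-1}\xi(k_{j,N}) \to (\lambda_j\Phi(c))^{1/\alpha}$ in probability, and since $k_{j,N}/N \to x_j$ and $G$ is continuous, the single-term contribution converges in probability to $(\lambda_j\Phi(c))^{1/\alpha}G(x_j)$. Summing,
\[
S_c \;\longrightarrow\; \sum_{j\in J_c}(\lambda_j\Phi(c))^{1/\alpha}G(x_j) \;=\; \Big\langle G,\; \sum_{j\in J_c}(\lambda_j\Phi(c))^{1/\alpha}\delta_{x_j}\Big\rangle,
\]
which combined with the previous two pieces gives $\langle G, \pi^N\rangle \to \langle G, \pi\rangle$ in probability. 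Finally, the second statement of the proposition is immediate: for $j\in J_s$, $\beta_j>\alpha$, and Lemma \ref{lem: order of R} gives $E[\xi(k_{j,N})]\sim (\lambda_j\Phi(c))^{1/\alpha}N^{\beta_j/\alpha}$ with $\V[\xi(k_{j,N})]=o(N^{2\beta_j/\alpha})$; Chebyshev applied to $N^{-\beta_j/\alpha}\xi(k_{j,N})$ then yields convergence in probability to $(\lambda_j\Phi(c))^{1/\alpha}$.
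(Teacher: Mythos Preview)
Your proposal is correct and follows essentially the same route as the paper: decompose $\langle G,\pi^N\rangle$ according to regular sites and the three types of defect sites, use independence and the mean/variance of $\Pb_{\Phi(c)}$ plus Chebyshev on the regular part, and invoke Lemma~\ref{lem: order of R} together with Chebyshev for the defect contributions. One small quibble: for $j\in J_b$ with $\beta_j\le 0$ the fugacity does not diverge, so Lemma~\ref{lem: order of R} does not literally apply; there the marginal mean and variance are simply bounded (or vanish), which still gives $N^{-1}\xi(k_{j,N})\to 0$---your $\,\vee 1$ covers the mean, but the variance is $O(1)$ rather than $o(1)$ when $\beta_j=0$, though this is harmless after dividing by $N^2$.
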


Now we turn to the case when $g$ is bounded. 
Recall, in this case, $\Pam_c^N$ is defined for $c$ such that $\Phi(c)\in [0,1/q_N)$ where
$q_N=\max \{1,\lambda_j N^{\beta_j} | j\in J\}$.  Recall also, in this setting, that $\beta_j\leq 0$, that is $J_s=\emptyset$.   
When $J_c\neq \emptyset$, we will define $\lambda_{\max} := \max_{j\in J_c} \{\lambda_j\}$.  Otherwise, when $J_c=\emptyset$, we will take $\lambda_{\max}=1$.  Hence, the domain for $c$ so that $\Pam_c^N$ is defined for all $N$ is $[0, R(1/\lambda_{\max})$.

For $c \in[0,R(1/\lambda_{\max}))$, we also have
$N^{-1} \sum_{k\notin \D_N} \xi(k) G(k/N) $
converges in probability to 
$\int_\T G(x) c \,dx$ as $N\to \infty$.
Given $\xi(k)$ has finite $\Pam_c^N$-expectation and variance for all $k\in \D_N$, the following is easily obtained.
\begin{Prop} 
\label {prop: static limit, bounded g}
Assume $g$ is bounded and $J_s=\emptyset$. Then,
for $c \in[0,R(1/\lambda_{\max}))$,
 for any $G\in C(\T)$ and $\delta>0$, we have
\[
\lim_{N\to \infty} \Pam_c^N
\big[ \, \big|
\langle G, \pi^N\rangle
-
\int_\T G(x)c\, dx\big|
>\delta
\, \big]
 =0.
\]
\end{Prop}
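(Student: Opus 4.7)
Since $J_s=\emptyset$, the super-critical defect set $\D_{s,N}$ is empty, so $\pi^N$ coincides with $\wpi^N$. The strategy is to split
\[
\langle G,\pi^N\rangle \;=\; \frac{1}{N}\sum_{k\in \T_N\setminus \D_N} \xi(k)\,G(k/N) \;+\; \frac{1}{N}\sum_{j\in J} \xi(k_{j,N})\,G(k_{j,N}/N),
\]
prove the first ``bulk'' sum converges in probability to $c\int_\T G(x)\,dx$ via a law-of-large-numbers computation built on the product structure of $\Pam_c^N$, and show the second ``defect'' sum vanishes in probability because each of the finitely many defect marginals has uniformly bounded expectation.

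\textbf{Bulk sum.} Under $\Pam_c^N$, the variables $\{\xi(k)\}_{k\notin\D_N}$ are i.i.d.\ with common marginal $\Pb_{\Phi(c)}$. The hypothesis $c\in[0,R(1/\lambda_{\max}))$ gives $\Phi(c)<1/\lambda_{\max}\le 1=r_g$, so the mean $c$ and variance $\sigma^2(\Phi(c))$ are finite. The expectation of the bulk sum equals $N^{-1}\sum_{k\notin\D_N} c\,G(k/N)$, which converges to $c\int_\T G(x)\,dx$ by Riemann sum approximation using $G\in C(\T)$ and $|\D_N|=n_0$ fixed. Its variance is $N^{-2}\sigma^2(\Phi(c))\sum_{k\notin\D_N}G(k/N)^2=O(N^{-1})$. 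Chebyshev's inequality then yields convergence in probability.

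\textbf{Defect sum.} For each $j\in J$, set $\phi_j^N:=\lambda_j N^{\beta_j}\Phi(c)$, so that $\xi(k_{j,N})\sim\Pb_{\phi_j^N}$. Since $J_s=\emptyset$ forces $\beta_j\le 0$, we verify case-by-case that $\phi_j^N\le \lambda_{\max}\Phi(c)<1=r_g$ for all $N$ sufficiently large: for $j\in J_c$ ($\beta_j=0$, $\lambda_j>1$) one has $\phi_j^N=\lambda_j\Phi(c)\le\lambda_{\max}\Phi(c)$; for $j\in J_b$ with $\beta_j=0$ and $\lambda_j<1$ one has $\phi_j^N<\Phi(c)<1$; and for $j\in J_b$ with $\beta_j<0$ one has $\phi_j^N\to 0$. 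Hence $R(\phi_j^N)=E_{\Pam_c^N}[\xi(k_{j,N})]$ is uniformly bounded in $N$ and $j\in J$. By Markov's inequality,
\[
\Pam_c^N\!\left[\Big|\tfrac{1}{N}\sum_{j\in J}\xi(k_{j,N})G(k_{j,N}/N)\Big|>\delta\right] \le \frac{n_0\|G\|_\infty}{\delta N}\,\max_{j\in J} R(\phi_j^N)\;=\;O(N^{-1}),
\]
so the defect sum tends to $0$ in probability. Combining the two parts yields the claim.

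\textbf{Main obstacle.} There is no substantive obstacle; this is essentially a classical LLN argument, as the phrase ``easily obtained'' in the paper suggests. The only care needed is checking that the fugacities $\phi_j^N$ at the defect sites stay bounded away from the convergence radius $r_g=1$ uniformly in $N$; this is precisely what the hypothesis $c<R(1/\lambda_{\max})$ guarantees by a case analysis over the partition $J=J_c\cup J_b$ (recall $J_s=\emptyset$), keeping both means and variances under $\Pam_c^N$ finite and uniformly controlled.
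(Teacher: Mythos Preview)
Your proposal is correct and follows essentially the same approach as the paper: split into the bulk sum over $\T_N\setminus\D_N$ (handled by the i.i.d.\ structure, finite mean $c$ and finite variance of $\Pb_{\Phi(c)}$, together with Chebyshev) and the finite defect sum (handled by the uniform boundedness of $E_{\Pam_c^N}[\xi(k_{j,N})]$). The paper merely sketches this by noting that the bulk converges and that defect sites have finite $\Pam_c^N$-expectation and variance, calling the result ``easily obtained''; your write-up fills in exactly those details, including the case check that the defect fugacities stay bounded below $r_g=1$.
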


We finish this section with a technical lemma used in Proposition \ref{static limit of pi}.
 \begin{Lem} \label {lem: order of R}
 Assume $g(k)\sim k^\alpha$ for some $\alpha\in (0,1] $. 
 For each $\varphi>0$, let $X$ be a random variable with distribution $\Pb_{\varphi}$ (cf.\,\eqref{Pb def}).
 Then, for each $n\in \N$, $E[X^n]\sim\varphi^{n/\alpha}$ as $\varphi \to \infty$.
 As a result, $\ln Z(\varphi) \sim \alpha 
\varphi^{1/\alpha}$
 and $\V[X] =o\big( \varphi^{2/\alpha}\big)$ as $\varphi\to \infty$.
 \end{Lem}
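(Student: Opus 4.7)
The approach is a Laplace-type analysis of the sums $\sum_{k\geq 0} k^n \varphi^k/g(k)!$ defining the moments of $\Pb_\varphi$. The key preliminary observation is that since $g(j)/j^\alpha \to 1$, Cesaro averaging gives $\sum_{j=1}^k \log(g(j)/j^\alpha) = o(k)$, hence $\log g(k)! = \alpha \log k! + o(k)$. Combined with Stirling's formula, this yields
\[
\log\!\Big(\frac{\varphi^k}{g(k)!}\Big) = \psi_\varphi(k) + o(k), \qquad \psi_\varphi(y) := y\log\varphi - \alpha y\log y + \alpha y.
\]
The function $\psi_\varphi$ is strictly concave on $(0,\infty)$, attains its maximum at $y^*(\varphi):=\varphi^{1/\alpha}$ with value $\psi_\varphi(y^*)=\alpha\varphi^{1/\alpha}$, and $\psi_\varphi''(y^*) = -\alpha\varphi^{-1/\alpha}$.

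I would then establish concentration of $\Pb_\varphi$ near $y^*$. The ratio of consecutive probabilities $\Pb_\varphi(k+1)/\Pb_\varphi(k) = \varphi/g(k+1)$ exceeds $1$ precisely when $g(k+1)<\varphi$, so $\Pb_\varphi$ is unimodal with mode close to $y^*$. Moreover, fix $\delta\in(0,1)$; for $k \geq (1+\delta)y^*$ and $\varphi$ large, one has $g(k+1) \geq (1+\delta/2)^\alpha\varphi$, so the ratio is uniformly bounded by some $r(\delta)<1$, giving a geometric tail $\Pb_\varphi(X \geq (1+\delta)y^*) \leq C e^{-c(\delta)y^*}$, with an analogous bound on the left tail. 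Splitting $E[X^n] = \sum_k k^n \Pb_\varphi(k)$ into the window $|k-y^*|\leq \delta y^*$ and its complement, the window contributes $(y^*)^n(1+O(\delta))^n\,\Pb_\varphi(|X-y^*|\leq \delta y^*)$, while the complement contributes $o((y^*)^n)$ since $k^n$ grows only polynomially whereas the tail is geometric in $y^*$. Letting $\varphi\to\infty$ and then $\delta\to 0$ yields $E[X^n]\sim \varphi^{n/\alpha}$.

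The remaining conclusions follow. Differentiating $\log Z(\varphi)$ termwise gives $(\log Z)'(\varphi) = R(\varphi)/\varphi$; since $R(\varphi)\sim \varphi^{1/\alpha}$ (the $n=1$ case) and $R(s)/s$ is bounded near $0$ (using $R'(0) = 1/g(1)$), integrating yields
\[
\log Z(\varphi) = \log Z(1) + \int_1^\varphi \frac{R(s)}{s}\,ds \sim \int_1^\varphi s^{1/\alpha - 1}\,ds \sim \alpha\varphi^{1/\alpha}.
\]
For the variance, writing $E[X^n] = \varphi^{n/\alpha}(1+\epsilon_n(\varphi))$ with $\epsilon_n(\varphi)\to 0$, a direct subtraction gives $\V[X] = \varphi^{2/\alpha}(\epsilon_2 - 2\epsilon_1 - \epsilon_1^2) = o(\varphi^{2/\alpha})$. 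The principal technical obstacle is making the preliminary asymptotic $\log g(k)! = \alpha \log k! + o(k)$ sufficiently uniform for $k$ near $y^*(\varphi)$ to drive the Laplace step; this is handled by splitting $\sum_{j\leq k}\log(g(j)/j^\alpha)$ into a fixed prefix (bounded uniformly in $k$) and a Cesaro remainder, combined with the geometric-ratio tail bound above.
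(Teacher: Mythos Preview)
Your proposal is correct and takes a genuinely different route from the paper's proof. The paper exploits the specific structure $g(k)\sim k^\alpha$ algebraically: it shows $\sum_k k^{n\alpha}\varphi^k/g(k)! \sim \varphi^n Z(\varphi)$ by observing that $k^{n\alpha} \approx g(k)g(k-1)\cdots g(k-n+1)$, which telescopes against $g(k)!$; this gives $E[(X^\alpha)^n]\sim\varphi^n$, and then a Jensen/H\"older sandwich (writing $1/\alpha = p + (1-p)l$ with $l\in\N$) converts moments of $X^\alpha$ into moments of $X$. Your argument is instead probabilistic: you establish concentration of $\Pb_\varphi$ near $y^*=\varphi^{1/\alpha}$ directly from the monotone ratio $\Pb_\varphi(k+1)/\Pb_\varphi(k)=\varphi/g(k+1)$ and read off all moments at once. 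The paper's method is shorter and slicker for this particular $g$; your approach is more robust (it would extend to any $g$ for which the mode location and geometric tails can be identified) and arguably more transparent, at the cost of the tail bookkeeping. Two small remarks: your Laplace preamble ($\log g(k)! = \alpha\log k! + o(k)$) is motivational but not actually used, since the ratio test alone drives the concentration; and the comment that $R(s)/s$ is bounded near $0$ is superfluous, as you integrate from $1$.
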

 
 \begin{proof}
We first show $E[X^n]\sim\varphi^{n/\alpha}$ for all $n\in \N$. 
To this end, let us assume for now
\begin{equation} \label{mean: g by k^alpha}
\sum_{k=1}^\infty \dfrac{k^{n\alpha} \varphi^k}{g(k)!}
\sim
\sum_{j=n}^\infty  \dfrac{\varphi^k}{g(j-n)!}.
\end{equation}
Let $Y = X^\alpha$. Then,
 \[
 E[Y^n]
 =
 \dfrac{1}{Z(\varphi)}
\sum_{k=1}^\infty \dfrac{k^{n\alpha} \varphi^k}{g(k)!}
\sim
 \dfrac{1}{Z(\varphi)}
\sum_{k=n}^\infty  \dfrac{ \varphi^k}{g(k-n)!}
=\varphi^n.
 \]
 As $\alpha\in (0,1]$, we may find $p\in[0,1)$ and $l \in \N$ such that $\alpha^{-1} = p+(1-p)l$.
 Also, since $E[X^n] = E[Y^{n/\alpha} ]$, by Jensen's and H\"older's inequalities,
 \[
 E[Y^n]^{1/\alpha} \leq E[X^n] \leq E[Y^n]^p E[Y^{nl}]^{1-p}.
 \]
Since $E[Y^n]\sim  \varphi^n$ and $E[Y^{nl}]\sim\varphi^{nl}$, we obtain
$E[X^n ] \sim \varphi^{n/\alpha}$. 

For the limit behavior of $E[X^n]$, it remains to show the claim \eqref{mean: g by k^alpha}. 
As $g(k)\sim k^{\alpha}$, for any $A>0$, we may find $\lambda_1 = \lambda_1(A)$ and 
$\lambda_2 = \lambda_2(A)$, such that
$\lambda_1 k^\alpha\leq g(k) \leq \lambda_2 k^\alpha$ for all $k\geq A$ and $\lim_{A\to \infty}
\lambda_1 = \lim_{A\to \infty} \lambda_2 =1$. 
Then, for all $k\geq A+n$,
\[
 \lambda_2^{-n}
\leq
\dfrac{k^{n\alpha}}{\prod_{k-n<k'\leq k}g(k')}
\leq 
\Big(\dfrac{A+n}{A} \Big)^n \lambda_1^{-n}.
\]
Therefore,
\[
 \lambda_2^{-n}
\sum_{k=A+n}^\infty 
\dfrac{ \varphi^k}{g(k-n)!}
\leq
\sum_{k=A+n}^\infty \dfrac{k^{n\alpha} \varphi^k}{g(k)!}
\leq
\Big(\dfrac{A+n}{A} \Big)^n \lambda_1^{-n}
\sum_{k=A+n}^\infty 
\dfrac{ \varphi^k}{g(k-n)!}
\]
Notice that, if $\varphi$ is sent to infinity in the above display, we may replace $\sum_{k\geq A+n}$ by either $\sum_{k\geq 0}$ or $\sum_{k\geq n}$.
Then the claim \eqref{mean: g by k^alpha} follows from taking $\varphi\to \infty$ and then $A\to \infty$. 

 We have shown $E[X^n]\sim\varphi^{n/\alpha}$ for all $n\in \N$. Then, it follows that
 $\V[X] = o\big( \varphi^{2/\alpha}\big)$
 as  $E[X^2] \sim {E[X]}^2 \sim  \varphi^{2/\alpha}$.
To prove $\ln Z(\varphi) \sim \alpha 
\varphi^{1/\alpha}$, notice $
\dfrac{d}{d\varphi} \ln Z(\varphi)
=
\varphi^{-1} E[X]
\sim
\varphi^{1/\alpha - 1}
$
and then apply L'Hospital's rule, to finish the argument.
 \end{proof}


\section{Initial measures}
\label{sec: initial measures}
In this section, we specify the assumptions on the initial measures $\{\mu^N\}$ we use to start our dynamics.
Roughly speaking, $\{\mu^N\}$ should be associated with a macroscopic profile which gives the initial condition for the hydrodynamic limit.
We will also require $\mu^N$ to possess certain relative entropy estimates and to be stochastically bounded with respect to invariant measures. 

To specify these conditions, recall, for $\mu$, $\nu$, two probability measures on $\Omega_N$, we say that  $\mu\leq \nu$, that is $\mu$ is stochastically bounded by $\nu$, if for all $f: \Omega_N \mapsto \R$ coordinately increasing, we have $E_{\mu}(f) \leq E_{\nu} (f)$. 
Fix also $\pi$, a nonnegative measure on $\T$, such that 
\begin{equation} \label{eqn of pi}
\pi(dx) = \rho_0(x)dx + \sum_{j\in J_c} \m_{0,j}\, \delta_{x_j} (dx)
\end{equation}
where $\rho_0(x)\in L^1(\T)$ and $\m_{j,0}\geq 0$ for  $j\in J_c$. 
Recall also, the empirical measure $\pi^N$ defined in \eqref{eqn: pi_N}.

Throughout this work, we will assume the following on the sequence of initial measures $\{\mu^N\}_{N\in \N}$ on $\Omega_N$.
\begin{Cond}
\label {condition on mu N}
The following hold:
\begin{enumerate}
\item $\{\mu^N\}_{N\in \N}$ has macroscopic profile $\pi$ on $\T\setminus \D_s$, i.e.~for all $G(x)\in C ( \T)$ and $\delta>0$,
\[
\lim_{N\to \infty} \mu^N
\Big[ \, \Big|
 \langle G, \pi^{N} \rangle
-
 \langle G, \pi \rangle
\Big|
>\delta
\, \Big]
 =0.
\]  
\item
There exists $c_0> 0$ such that
the relative entropy of $\mu^N$ with respect to $\Pam_{c_0}^N$ is of order $N$:  Let $f_N := d\mu^N/d\Pam_{c_0}^N$.  Then, $H(\mu^N | \Pam_{c_0}^N):=  \int f_N \ln f_N d\Pam_{c_0}^N =O(N)$.
\item
When $g$ is of $n^\alpha$ type, 
there exists $c'$
such that $\mu^N$ is stochastically bounded by $\Pam_{c'}^N$ for all $N$.
\item[(3$'$)]
When $g$ is bounded, 
there exists $c'$ such that,
 when restricted to $\T_N\setminus \D_{c,N}$, $\mu^N$ is stochastically bounded by $\kappa_{c'}^N$, that is $E_{\mu^N}[f]\leq E_{\kappa_{c'}^N}[f]$ for coordinate increasing functions supported on $\{\eta(k): k\in \T_N\setminus\D_{c,N}\}$.
 Here,
 $$\kappa_{c'}^N =  \prod_{k\in \T_N\setminus \D_{b,N}} \Pb_{\Phi(c')} \times \prod_{k=k_{j,N}\in \D_{b,N}}\Pb_{\lambda_j N^{\beta_j}\Phi(c')}.$$
\end{enumerate}
\end{Cond}

We comment that
item (3) is sufficiently general in the $n^\alpha$ setting to allow $\{\mu^N\}$ to be associated with profiles of the form $\pi_0(dx) = \rho_0(x)dx + \sum_{j\in J_c}\mathfrak{m}_{0,j}\delta_{x_j}(dx)$, for densities $\rho_0$ and masses $\{\mathfrak{m}_{0,j}\}$, as demonstrated in Section \ref{sec: local equilibria}.  In the bounded $g$ case, however, if $\{\mu^N\}$ satisfied item (3), then $O(N)$ accumulations at points $\D_{c,N}$ would not be allowed.  In this case, the only profiles allowed would be of form $\pi_0(dx) = \rho_0(x)dx$, where $\|\Phi(\rho_0)\|_\infty \leq \min_{j\in J_c}\tfrac{1}{\lambda_j}$ (cf. Section \ref{sec: coupling}).  As mentioned in the introduction, hydrodynamic evolution from such profiles would not see any defects.

In this context, item (3$'$) is formulated so that, with respect to $\{\mu^N\}$, $O(N)$ accumulations are possible on $\D_{c,N}$ as well as later point masses on $\D_c$ in the hydrodynamic evolution.

In fact, conditions $(3)$ and $(3')$ can be made to accommodate a larger class of initial measures $\{\mu^N\}$.
For example, one may remove stochastic bounded assumptions on coordinates in $\D_{b,N}$. We have however chosen to state the $(3)$ and $(3')$ in the forms given to streamline arguments and avoid more piecemeal calculations.

Define now $\mu_t^N$ as the distribution of the process $\xi_t$, with initial measure $\mu^N$.
\medskip

In the rest of this section, we make remarks on 
the consequences of the
relative entropy bound in terms of $\mu^N_t$-particle numbers, discuss the use of attractiveness, and also 
provide a large class of examples of $\mu^N$ which satisfy Condition \ref{condition on mu N}.

\subsection{$\mu^N_t$-particle numbers and relative entropy}
\label{subsec: relative entropy}
We first comment on the $\mu_t^N$-particle numbers in the system.
Denote $H_N:= H(\mu^N | \Pam_{c_0}^N) = O(N)$.  Notice that the entropy does not increase in $t$, that is $H(\mu_t^N | \Pam_{c_0}^N) \leq H_N$ for 
$t\geq 0$
(cf.~pp~340, \cite{KL}).

Assume now that $g$ is of $n^\alpha$ type.
By the entropy inequality
$E_\mu[f] \leq H(\mu|\nu) + \ln E_\nu[e^f]$ (cf.\,p.338 \cite{KL}), 
\[
E_{\mu_t^N} \Big[
 \sum_{k\notin \D_{s,N}}
\xi(k)
\Big]
\leq
H(\mu_t^N | \Pam_{c_0}^N)
+
 \ln E_{\Pam_{c_0}^N} 
\big[
e^{\sum_{k\notin \D_{s,N}} \xi(k)}
\big].
\]
As $\Pam_{c_0}^N$ is product measure, the $\ln$ term is written as $ \sum_{k\notin \D_{s,N}}
 \ln E_{\Pam_{c_0}^N} [ e^{\xi(k)} ]$
  which is $O(N)$ by Lemma \ref{lem: order of R}.
Thus, the condition $H_N=O(N)$ specifies $O(N)$ particles on the sites $\T_N\setminus \D_{s,N}$ for times $t\geq 0$.

On the other hand, at a super-slow site $k_{j,N}\in \D_{s,N}$, the $\Pam_{c_0}^N$-particle number is typically of order $O(N^{\beta_j/\alpha})$, as discussed in Section \ref{sec: static limit}. Then, because $H_N = O(N)$, as consequence of the entropy inequality, we may conclude,
with respect to $\mu_t^N$, that
 $N^{-\beta_j/\alpha} \xi(k_{j,N})$  converges in probability to $\left(\lambda_j \Phi(c_0)\right)^{1/\alpha}$; see near \eqref{eqn: initial slow site esti} for a proof.

Therefore, 
the net exchange of particle numbers between super-slow sites $\D_{s,N}$ and the rest of the system is of order $N$ and 
the total particle number on $\T_N\setminus \D_{s,N}$ remains  $O(N)$ for all times $t\geq 0$.

\smallskip
We now turn to case when $g$ is bounded.
 Since in this setting $J_s=\emptyset$, that is $\D_s = \emptyset$ (Condition \ref{no J_s in bounded}), 
by the previous entropy inequality discussion, it follows that the total number of particles in $\T_N$ is of order $N$ for all times $t\geq 0$.

\subsection{Basic coupling} 
\label{sec: coupling}
We discuss the use of the stochastic boundedness assumption (3) and (3$'$) in Condition \ref{condition on mu N}.
 Since $g(n)$ is an increasing function in $n$, the dynamics generated by $L_N$ is `attractive':
if initially $\xi_0$ is distributed according to measures $\mu\leq \nu$, then we have $\mu_t \leq  \nu_t$ for all $t\geq 0$ where $\mu_t$ and $\nu_t$ are distributions of $\xi_t$
(cf.\,\cite{A}, Chapter II in \cite{L}).

In the case when $g$ of $n^\alpha$ type, as $\Pam_{c'}^N$ is invariant, the assumption $\mu^N \leq \Pam_{c'}^N$
 implies that $\mu_t^N \leq \Pam_{c'}^N$ for all $t\geq 0$.
However, when $g$ is bounded, 
the domain for $c$ in $\Pam_c^N$ is $c< R(1/\lambda_{\max})$ (cf.~Proposition \ref{prop: static limit, bounded g}).
Then, an assumption 
$\mu^N \leq \Pam_{c'}^N$
would imply that $\pi$, the macroscopic profile associated to $\mu^N$, is $\pi(dx) = \rho_0(x)dx$ with $\|\rho_0\|_\infty< R(1/\lambda_{\max})<\infty$.
To accommodate more initial profiles $\pi$ (and observe more involved limit evolutions), 
we have assumed $(3')$ in Condition \ref{condition on mu N} instead, that is, for all coordinately increasing  $f:\Omega_N \mapsto \R$ depending only on $\{\xi(k)\}_{k\notin \D_{c,N}}$,
we have $E_{\mu_t^N} [f(\xi)] \leq E_{\kappa_{c'}^N} [f(\xi)]$.

We now illustrate how we will apply attractiveness under assumption $(3')$. Instead of an evolution with respect to particle numbers in $\N_0$ and configurations in $\Omega_N^{\T_N}$, we may consider a dynamics corresponding to $\overline \N_0:=\N_0\cup \{\infty\}$ and $\overline \Omega_N := \overline \N_0^{\T_N}$. 
Recall the constant $c'$ in $(3')$.  Define $\overline g_{k,N}(\cdot)$  by $\overline g_{k,N}(n) = g_{k,N}(n)$ for $n\in \N_0$ and $\overline g_{k,N}(\infty) = \Phi(c')$. Consider now the following generator on $\overline\Omega_N$
\[
\overline L_N f(\xi) =
\sum_{k\in \T_N} 
\Big\{
\overline g_{k,N}(\xi(k))
\big(f(\xi^{k,k+1}) - f(\xi) \big)
+
\overline g_{k,N}(\xi(k)) 
\big(f(\xi^{k,k-1}) - f(\xi) \big)
\Big\} .
\]
Here, $\xi^{x,y}$ is defined as in \eqref{def: xi xy} by using the convention $\infty\pm1 = \infty$.
When starting with configurations in $\Omega_N$, $\overline L_N$ coincides with $L_N$. Once a site starts with $\infty$ particles, it will serve as a `reservoir' which pumps particles into its neighbors at the rate of $\Phi(c')$. 
Let $\delta_{\infty}$ be the Dirac measure on the extended number $\infty$.  Define $\overline \kappa_{c'}^N$ as the product  measure on $\overline \Omega_N$ that coincides with $\kappa_{c'}^N$ on $k\notin \D_{c,N}$ and has marginal $\delta_\infty$ for $k\in \D_{c,N}$.
It is straightforward to check that $\overline\kappa_{c'}^N$ is invariant under $\overline L_N$.  Also, we have that $\mu^N \leq \overline \kappa_{c'}^N$ by $(3')$ in Condition \ref{condition on mu N}.
As attractiveness and the basic coupling are still in effect for $\overline L_N$, we obtain that the later time $t$ distribution satisfies $\mu_t^N \leq \overline \kappa_{c'}^N$ for any $t\geq 0$.  In particular, for a coordinate-increasing function $f(\xi)$ which does not depend on $\{\xi(k)\}_{k\in \mathcal{D}_{c,N}}$, we have $E_{\mu^t_N}[f] \leq E_{\overline\kappa^N_{c'}}[f]=E_{\kappa^N_{c'}}[f]$.

Finally, as a general remark, we make technical use of `attractiveness' most essentially in the cutoff of large densities in the local replacements (Lemma  \ref{lem: 1 block} and Lemma \ref{lem: 2 blocks}),
and in proving the limit mass profile has $L^2$ absolutely continuous part (Lemma \ref{lem: abs continuity}).
The latter property is needed in the proof of uniqueness of weak solutions (Theorem \ref{thm: uniqueness} and  \ref{thm: uniqueness, bounded g}).

\subsection{Local equilibria}
\label{sec: local equilibria}
We now give explicit examples of initial measures that satisfy the Condition \ref{condition on mu N}.
These examples will be denoted by 
$\mu^N_{\text{le}}$ as they are related with the usual `local equilibrium' measures in setting without defects (cf. \cite{KL}).

Let $\pi$ be as in \eqref{eqn of pi}
with $\rho_0 (x)\in L^\infty(\T)$ and $\m_{0,j}\geq0$ for each $j\in J_c$.  
For each $k\in \T_N$, define
\[
 \rho_{k,N} = N\int_{(k-1)/N}^{k/N} \rho_0(x) dx.
\]
We will construct $\{\mu^N\}$ separately for $g$ of $n^\alpha$ and bounded types.
Consider first $g$ of $n^\alpha$ type.
Fix $c_0>0$ and define 
\[
\varphi_{k,N} =
\begin{cases}
\Phi(\rho_{k,N})&
 k\in\T_N \setminus \D_N\\
 0& k=k_{j,N}\in \D_{b,N}\\
 \left( N {\m_{0,j}} \right)^\alpha & k=k_{j,N}\in \D_{c,N}\\
\lambda_j N^{\beta_j} \Phi (c_0) & k=k_{j,N}\in \D_{s,N}.
\end{cases}
\]
For each $N\in \N$, let
$\mu^N_{\text{le}}$ be the product measure on
$\Omega_N$ with marginals given by
\[
\mu^N_{\text{le}} (\xi(k) = n)  \;=\;
\Pb_{\varphi_{k,N}}(n), \quad \text{for } k\in \T_N \,,\,  n\geq 0.
\]
\begin{Lem}
Suppose $g$ is of $n^\alpha$ type. Then, $\{\mu^N = \mu^N_{\text{le}}\}$
satisfies Condition \ref{condition on mu N}. 
\end{Lem}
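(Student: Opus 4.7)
The plan is to verify each of the three items in Condition \ref{condition on mu N} for $\mu^N_{\text{le}}$ separately, exploiting the product structure throughout and using Lemma \ref{lem: order of R} to handle the sites where $\varphi_{k,N}$ diverges with $N$. A useful preliminary is that $\Pb_\phi$ has monotone likelihood ratio in $\phi$ (since $\Pb_\phi(n+1)/\Pb_\phi(n)=\phi/g(n+1)$), so $\Pb_\phi$ is stochastically increasing in $\phi$; this is the mechanism behind item (3).

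For item (1), write $\langle G,\pi^N\rangle = N^{-1}\sum_{k\in\T_N\setminus\D_{s,N}}\xi(k)G(k/N)$ and split the sum into regular sites, sub-critical defects, and critical defects. On regular sites, $E_{\mu^N_{\text{le}}}[\xi(k)]=\rho_{k,N}$ and the variance $\sigma^2(\Phi(\rho_{k,N}))$ is bounded uniformly (since $\rho_0\in L^\infty$), so by independence and Chebyshev the regular-site sum converges in probability to $\int_\T G(x)\rho_0(x)\,dx$. Sub-critical defects have $\varphi_{k,N}=0$, so $\xi(k_{j,N})\equiv 0$ and they contribute nothing. At a critical defect ($\beta_j=\alpha$), $\varphi_{k_{j,N},N}=(N\m_{0,j})^\alpha\to\infty$, and Lemma \ref{lem: order of R} gives $E[\xi(k_{j,N})]=R(\varphi_{k_{j,N},N})\sim \varphi_{k_{j,N},N}^{1/\alpha}=N\m_{0,j}$ together with $\V[\xi(k_{j,N})]=o(N^2)$, so $N^{-1}\xi(k_{j,N})\to\m_{0,j}$ in probability. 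Summing the finitely many critical contributions yields the atomic part $\sum_{j\in J_c}\m_{0,j}G(x_j)$.

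For item (2), the product structure gives
\[
H(\mu^N_{\text{le}}\mid\Pam_{c_0}^N)=\sum_{k\in\T_N} H(\Pb_{\varphi_{k,N}}\mid\Pb_{\psi_{k,N}}),
\]
where $\psi_{k,N}$ denotes the $\Pam_{c_0}^N$-marginal parameter at $k$. Using $H(\Pb_\phi\mid\Pb_{\phi'})=\ln Z(\phi')-\ln Z(\phi)+R(\phi)\ln(\phi/\phi')$, one checks term by term. For regular sites $\varphi_{k,N}=\Phi(\rho_{k,N})$ is bounded (since $\rho_0\in L^\infty$) and $\psi_{k,N}=\Phi(c_0)$ is fixed, so each summand is $O(1)$ and the regular-site contribution is $O(N)$. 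At super-critical defects $\varphi_{k,N}=\psi_{k,N}$, so the summand vanishes. The delicate case is a critical defect, where $\varphi=(N\m_{0,j})^\alpha$ and $\psi=\lambda_jN^\alpha\Phi(c_0)$; the crucial observation is that $\ln(\varphi/\psi)=\ln(\m_{0,j}^\alpha/(\lambda_j\Phi(c_0)))$ is a \emph{constant} in $N$ because both scale like $N^\alpha$, and then $R(\varphi)\sim N\m_{0,j}$ and $\ln Z(\psi)-\ln Z(\varphi)=O(N)$ by Lemma \ref{lem: order of R}. Since $|J_c|<\infty$, the total defect contribution is $O(N)$.

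For item (3), using stochastic monotonicity of $\Pb_\phi$ in $\phi$, it suffices to dominate $\varphi_{k,N}$ pointwise by the $\Pam^N_{c'}$-marginal parameter. Choose
\[
c'\;\geq\;\max\Bigl\{c_0,\;\|\rho_0\|_\infty,\;\Phi^{-1}\bigl(\max_{j\in J_c}\m_{0,j}^\alpha/\lambda_j\bigr)\Bigr\};
\]
then at regular sites $\Phi(\rho_{k,N})\leq\Phi(c')$, at super-critical defects $\lambda_jN^{\beta_j}\Phi(c_0)\leq\lambda_jN^{\beta_j}\Phi(c')$, at sub-critical defects the bound is trivial, and at critical defects $(N\m_{0,j})^\alpha\leq \lambda_jN^\alpha\Phi(c')$ by the choice of $c'$. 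I expect the main technical point to be the entropy bound at the critical defects, where the cancellation $\ln(\varphi/\psi)=O(1)$ (owing to the matching $N^\alpha$ scaling on both sides) is what prevents the naive estimate from producing a stray $\log N$ factor; once this cancellation is observed, everything else reduces to bookkeeping with Lemma \ref{lem: order of R}.
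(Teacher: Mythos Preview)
Your proposal is correct and follows essentially the same route as the paper: verify items (1), (2), (3) separately via the product structure, the explicit formula $H(\Pb_{\phi_1}\mid\Pb_{\phi_2})=R(\phi_1)\ln(\phi_1/\phi_2)+\ln(Z(\phi_2)/Z(\phi_1))$, Lemma~\ref{lem: order of R}, and stochastic monotonicity of $\Pb_\phi$ in $\phi$. One small omission: in the entropy bound you do not mention the sub-critical defects ($j\in J_b$), where $\varphi_{k_{j,N},N}=0$ and $\psi_{k_{j,N},N}=\lambda_jN^{\beta_j}\Phi(c_0)$ give $H(\Pb_0\mid\Pb_\psi)=\ln Z(\psi)\sim \alpha\psi^{1/\alpha}=O(N^{\beta_j/\alpha})=o(N)$ by Lemma~\ref{lem: order of R}; this is trivial and the paper is equally terse about it.
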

\begin{proof}
Let $c'$ be such that $\Phi(c') = \max\{ \Phi\left(  \|\rho_0\|_\infty\right), ( \m_{0,j} )^\alpha, \Phi(c_0)  \} _{j\in J_c} $.
As $\Pb_{\phi_1} \leq \Pb_{\phi_2}$ if $\phi_1\leq \phi_2$ (cf. \cite{L}, pp.~32),
we have that the product measure $\mu^N_{\text{le}}$ is stochastically bounded by $\Pam_{c'}^N$ for all $N$.
As $G$ is uniformly continuous,
that $\mu^N_{\text{le}}$ is associated with the given macroscopic profile $\pi$
holds straightforwardly from Chebychev's inequality and Lemma \ref{lem: order of R}.

It remains to check the desired entropy bound
$H(\mu^N_{\text{lc}} | \Pam_{c_0}^N) = O(N)$.
Let $\varphi = \Phi(c_0)$. We compute
 \[
 H(\mu^N_{\text{le}} | \Pam_{c_0}^N)
 =
 \sum_{j\in J}H(\Pb_{\varphi_{k_{j,N},N}} | \Pb_{\lambda_j N^{\beta_j} \varphi})
 +
  \sum_{k\in \T_N\setminus \D_N}H(\Pb_{\varphi_{k,N}} | \Pb_{\varphi}).
 \]
Note that,  for any $\phi_1\geq 0$ and $\phi_2>0$,
 \[
 H(\Pb_{\phi_1} | \Pb_{\phi_2})
 =
 \ln\dfrac{\phi_1}{\phi_2}E_{\Pb_{\phi_1}} [X] 
 +
 \ln \dfrac{Z(\phi_2)}{Z(\phi_1)}
 \]
 where $X$ is defined as $X(n)=n$.
We also adopt $0\ln 0 = 0$ in the case $\phi_1=0$.
By Lemma \ref{lem: order of R}, we conclude the desired relative entropy bound $  H(\mu^N_{\text{le}} | \Pam_{c_0}^N) = O(N)$.
\end{proof}

We now consider the $g$ bounded case.  Note by assumption (Condition \ref{no J_s in bounded}) that $J_s=\emptyset$ in this setting.
Let
$\tilde \varphi_{k,N}=
\Phi(\rho_{k,N})$ for $k\in\T_N \setminus \D_N$  and $\tilde \varphi_{k,N}= 0$
 for $k\in \D_{b,N}$.  Also, let $\Pb'_{\lambda}$ denote the Poisson distribution with mean $\lambda$.
 
 We define $\tilde \mu_{\text{le}}^N$ as the product measure with marginals $\Pb_{\tilde \varphi_{k,N}}$ on sites $k\notin \D_{c,N}$ and $\Pb'_{\m_{0,j} N}$ for $k=k_{j,N} \in \D_{c,N}$.  
It is straightforward that $(1)$ and $(3')$ in Condition  \ref{condition on mu N} hold with $\mu^N = \tilde \mu_{\text{le}}^N$.  The choice of Poisson distributions at sites $\D_{c,N}$ allows for some explicit computation.

 Fix any $c_0\in (0,R(1/\lambda_{\max}))$,
 we now argue that $H(\tilde \mu_{\text{le}}^N | \Pam_{c_0}^N) =O(N)$.
 It suffices to check that $H(\Pb'_{aN} | \Pb_\varphi) = O(N)$ for any fixed $a\geq0$ and $\varphi\in(0,1)$, where $g_\infty =1$. 
 To see this, let $f_N(n) = \Pb'_{aN} (X=n)$ and $f(n) = \Pb_\varphi(X=n)$ where $X(n)=n$. 
 Then
 $H(\Pb'_{aN} | \Pb_\varphi) = E_{\Pb'_{aN} }[\ln f_N(X)] - E_{\Pb'_{aN} }[\ln f(X)]$.
 The term $E_{\Pb'_{aN} }[\ln f_N]$ is computed as $aN \ln(aN)  - aN - E_{\Pb'_{aN}} [\ln X!]$.
 By Stirling's formula, $n! \geq \sqrt{2\pi n}  e^{-n} n^n\geq e^{-n} n^n$, and Jensen's inequality, we have 
 $E_{\Pb'_{aN}} [\ln X!] \geq E_{\Pb'_{aN}} [X\ln X - X]\geq aN \ln(aN) - aN$. Therefore, we have $E_{\Pb'_{aN} }[\ln f_N]$ is $O(N)$.
 For the term $E_{\Pb'_{aN} }[\ln f]$, we may write it as $aN \ln \varphi - \ln Z(\varphi) - E_{\Pb'_{aN}}[\ln g(X)!]$. Note that $g(X) \leq 1$, so that $E_{\Pb'_{aN} }[\ln f] =O(N)$.  Hence, $H(\tilde \mu_{\text{le}}^N | \Pam_{c_0}^N) =O(N)$.

We now summarize the above calculations.
\begin{Lem}
\label{g bounded le lem}
Suppose $g$ is bounded. Then $\mu^N = \tilde \mu^N_{\text{le}}$
satisfies Condition \ref{condition on mu N}.
\end{Lem}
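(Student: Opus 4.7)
The plan is to verify items (1), (2), and (3$'$) of Condition~\ref{condition on mu N} for $\mu^N = \tilde{\mu}^N_{\text{le}}$, assembling the ingredients already sketched in the paragraph preceding the statement.

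For item (1), since $\tilde{\mu}^N_{\text{le}}$ is a product measure and $\D_{s,N} = \emptyset$ in the bounded setting, I would compute the mean and variance of $\langle G, \pi^N \rangle = N^{-1}\sum_{k \in \T_N} \xi(k) G(k/N)$ and apply Chebyshev. The mean equals $N^{-1} \sum_{k \notin \D_{c,N}} \rho_{k,N} G(k/N) + \sum_{j \in J_c} \m_{0,j} G(k_{j,N}/N)$, which tends to $\int_\T G \rho_0\, dx + \sum_{j \in J_c} \m_{0,j} G(x_j) = \langle G, \pi \rangle$ by uniform continuity of $G$ and a Riemann sum approximation. The variance is $O(N^{-1})$: regular-site variances are uniformly bounded (by boundedness of $\rho_0$ and $\sigma^2(\cdot)$ on compact subsets of $[0,r_g)$), while each critical defect contributes Poisson variance $\m_{0,j} N$, scaled by $N^{-2}\|G\|_\infty^2$. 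For item (3$'$), take $c' = \|\rho_0\|_\infty < \infty$. Restricted to $\T_N \setminus \D_{c,N}$, the marginal fugacities of $\tilde{\mu}^N_{\text{le}}$ are $\Phi(\rho_{k,N}) \leq \Phi(c')$ at regular sites and $0 \leq \lambda_j N^{\beta_j}\Phi(c')$ at sites of $\D_{b,N}$; using $\Pb_{\phi_1} \leq \Pb_{\phi_2}$ for $\phi_1 \leq \phi_2$ and that product stochastic order is preserved under coordinate-increasing functions, this gives $\tilde{\mu}^N_{\text{le}} \leq \kappa^N_{c'}$ on $\T_N \setminus \D_{c,N}$.

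For item (2), I would split the entropy over sites by the product structure. With $\varphi = \Phi(c_0)$, each regular-site contribution $H(\Pb_{\Phi(\rho_{k,N})} \,|\, \Pb_\varphi)$ is uniformly $O(1)$ since $\rho_0 \in L^\infty$ confines fugacities to a compact subset of $[0, r_g)$, giving $O(N)$ over $N$ sites; the finitely many $\D_{b,N}$ sites contribute $O(1)$ apiece. The one substantive step is bounding $H(\Pb'_{\m_{0,j} N} \,|\, \Pb_\varphi) = E_{\Pb'_{aN}}[\ln f_N] - E_{\Pb'_{aN}}[\ln f]$ (with $a = \m_{0,j}$) by $O(N)$. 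For the first term, writing $\ln f_N(n) = -aN + n\ln(aN) - \ln n!$ and applying Stirling's $n! \geq e^{-n} n^n$ followed by Jensen's inequality on the convex $x \mapsto x \ln x$ gives $E[\ln X!] \geq aN\ln(aN) - aN$, so $E[\ln f_N] \leq 0$. For the second term, $\ln f(n) = n\ln\varphi - \ln g(n)! - \ln Z(\varphi)$; since $g(n) \leq g_\infty = 1$, $\ln g(X)! \leq 0$, while $-aN\ln\varphi + \ln Z(\varphi) = O(N)$, so $-E[\ln f] \leq O(N)$. Combining yields $H \leq O(N)$, and $H \geq 0$ holds trivially.

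The only non-routine step is this Poisson-vs-zero-range entropy bound at critical defects. Using a genuine Poisson marginal with mean $\m_{0,j} N$ there is essential: the corresponding $\Pb_\phi$ would demand a fugacity $\phi$ outside the domain $[0, r_g) = [0, 1)$ because the required mean $\m_{0,j} N$ diverges. The Stirling--Jensen lower bound on $E[\ln X!]$ is precisely what prevents this mismatch from producing an $O(N \ln N)$ contribution to the entropy and keeps the scale at $O(N)$.
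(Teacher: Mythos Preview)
Your proposal is correct and follows essentially the same approach as the paper: items (1) and (3$'$) are handled directly from the product structure and stochastic ordering of the $\Pb_\phi$'s, and item (2) reduces to the same Stirling--Jensen bound on $H(\Pb'_{aN}\,|\,\Pb_\varphi)=O(N)$ at critical sites. One cosmetic point: at a critical defect $k_{j,N}$ the reference marginal of $\Pam^N_{c_0}$ is $\Pb_{\lambda_j\Phi(c_0)}$ rather than $\Pb_{\Phi(c_0)}$, but since $\lambda_j\Phi(c_0)\in(0,1)$ your argument applies verbatim with that value of $\varphi$.
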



\section{Results}
\label{sec: results}
Suppose that $\{\mu^N\}$ satisfies Condition \ref{condition on mu N}.  Consequently, $\mu^N$
has macroscopic profile 
\[
\pi(dx) = \rho_0(x)dx + \sum_{j\in J_c} \m_{0,j}\, \delta_{x_j} (dx)
\]
 on $\T\setminus \D_s$ 
and we have  $H(\mu^N | \Pam_{c_0}^N) = O(N)$ for some $c_0\geq 0$. 

On $\T_N$, we will observe the evolution of the zero-range process
speeded up by $N^2$, in diffusive scale. 
Denote the process $\eta_t: = \xi_{N^2t}$,
generated by $N^2L_N$ (cf.~\eqref{eqn: generator L}), for times $0\leq t\leq T$.  Here, $T>0$ refers to
a fixed time horizon.  We will access the space-time structure of the
process through the scaled mass empirical measure:
\begin{equation} \label{pi_t w/o D_s}
\pi_t^N(dx)
:=
\dfrac 1 N 
\sum_{k\in \T_N \setminus \D_{s,N}}
\eta_t (k) \delta_{k/N}(dx).
\end{equation}

Throughout, we will view each $\pi^N_t$ as a member of $\Mb$, the space of finite nonnegative measures on $\T\setminus \D_s$.
 We will place a metric
$d(\cdot,\cdot)$ on $\Mb$ which realizes the vague convergence on $\T\setminus \D_s$,
(see Section \ref{sec: tightness} for a definitive choice).  Here, the
trajectories $\{\pi^N_t: 0\leq t\leq T\}$ are elements of the
Skorokhod space $D([0,T],\Mb )$, endowed with the associated Skorokhod topology.

In the following, the process
measure and associated expectation governing $\eta_\cdot$ starting
from $\mu$ will be denoted by $\P_\mu$ and $\E_\mu$.  When the process
starts from $\{\mu^N\}_{N\in \N}$, in the class satisfying Condition
\ref{condition on mu N}, we will denote by $\P_N:= \P_{\mu^N}$ and
$\E_N:= \E_{\mu^N}$, the associated process measure and expectation.

\subsection{Hydrodynamic limits}
We now state our main results. Recall the initial profile $\pi$ and the constant $c_0$ from the beginning of this section.
\begin{Thm}
\label{main thm: k alpha}
Asssume $g$ is of $n^\alpha$ type.
Then, for any $t > 0$, test function
$G\in C(\T)$, and $\delta>0$, we have
\begin{equation} \label{HDL: alpha g}
\lim_{N\to \infty} \P_N
\Big[ \Big|
\langle G, \pi^N_t \rangle
-
\langle G, \pi_t \rangle
\Big|
>\delta
 \Big]
 =0,
\end{equation}
where $\pi_t(dx)=\rho(t,x)dx + \sum_{j\in J_c} \m_{j}(t)\, \delta_{x_j} (dx)$ is the unique weak solution to 
\begin{equation} \label {pde: k alpha}
\begin{cases}
\partial_t \pi_t
=
\partial_{xx} \Phi(\rho(t,x)),
& x\in \T \setminus \D_s,\ t\in(0,T),\\
\pi_t \vert_{t=0} = \pi, \quad
\Phi(\rho(t,x_j))  = \Phi(c_0), &t\in(0,T),\ j\in J_s,\\
\m_{j}(t) = \left( \lambda_j \Phi(\rho(t,x_j)) \right)^{1/\alpha}, & t\in(0,T), \, j\in J_c.
\end{cases}
\end{equation}
\end{Thm}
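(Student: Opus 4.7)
The plan is to follow the entropy method as adapted in the introduction, splitting the argument into a martingale decomposition, tightness, characterization of limit points in the bulk and at the defects, and finally uniqueness. Fix $G\in C^2(\T)$ with support away from $\D_s$. By Dynkin's formula,
\begin{equation*}
\langle G, \pi^N_t\rangle = \langle G, \pi^N_0\rangle + \int_0^t \frac{1}{N}\sum_{k\in \T_N} (\Delta_N G)(k/N)\, g_{k,N}(\eta_s(k))\, ds + M^N_t,
\end{equation*}
where $\Delta_N$ is the discrete Laplacian and $M^N_t$ is a martingale whose quadratic variation is $O(N^{-1})$ and hence negligible. The first task is tightness of $\{\pi^N_\cdot\}$ in $D([0,T],\Mb)$, which follows from a uniform $O(1)$ bound on the total mass of $\pi^N_t$ (from the $O(N)$ bound on particles outside $\D_{s,N}$, itself a consequence of the entropy bound in Condition \ref{condition on mu N}(2) via the entropy inequality) combined with standard Garsia--Rodemich--Rumsey estimates on the above integral.

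Characterization of limit points in the bulk is done by one-block and two-block replacements (Lemmas \ref{lem: 1 block} and \ref{lem: 2 blocks}). At a regular site $k$, $g(\eta_s(k))$ is replaced by $\Phi(\eta_s^{\theta N}(k))$, using attractiveness for high-density truncation and the spectral gap of the dynamics localized on an interval to compare with local equilibrium. Under vague convergence this turns the above integral into the weak form of $\partial_t\pi_t = \partial_{xx}\Phi(\rho)$ on $\T\setminus(\D_s\cup \D_c)$, recovering the nonlinear heat equation in the interior of each region separated by critical and super-critical defects.

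The hard part will be the derivation of the boundary conditions at $x_j\in \D_s\cup \D_c$. Following the sketch around \eqref{boundary_intro}, the core tool is a chain of replacements: a slow-site replacement (Lemma \ref{replacement: slow site}) compares the rate at $k_{j,N}$ with that at its neighbor, a one-sided one-block estimate (Lemma \ref{local replace, slow site}) to the right of the defect replaces $g(\eta_s(k_{j,N}+1))$ by $\Phi(\eta_s^{\ell,+}(k_{j,N}))$, and a two-block argument brings the window up to mesoscopic size $\theta N$, yielding
\begin{equation*}
\Phi\bigl(\eta^{\theta N,+}_s(k_{j,N})\bigr) \;\sim\; \frac{1}{\lambda_j N^{\beta_j}}\, g(\eta_s(k_{j,N})).
\end{equation*}
For $j\in J_c$ (so $\beta_j=\alpha$ and $g(n)\sim n^\alpha$), the right side is asymptotic to $\lambda_j^{-1}(N^{-1}\eta_s(k_{j,N}))^\alpha$, which for any limit point $\pi_s = \rho(s,x)dx + \sum_i \m_i(s)\delta_{x_i}$ gives $\Phi(\rho(s,x_j)) = \lambda_j^{-1}\m_j(s)^\alpha$, the desired critical boundary relation. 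For $j\in J_s$ (so $\beta_j>\alpha$), the same chain gives $\Phi(\rho(s,x_j)) \sim \lambda_j^{-1}(N^{-\beta_j/\alpha}\eta_s(k_{j,N}))^\alpha$, and the central obstruction is to show this rescaled atom is essentially frozen in time at the value $\left(\lambda_j\Phi(c_0)\right)^{1/\alpha}$. The rescaled occupation $N^{-\beta_j/\alpha}\eta_s(k_{j,N})$ is invisible to $\pi^N_s$ itself, so its stability cannot be read off from the empirical measure; instead, mass conservation together with the entropy estimate forces the total number of particles outside $\D_{s,N}$ to remain $O(N) = o(N^{\beta_j/\alpha})$, pinning $N^{-\beta_j/\alpha}\eta_s(k_{j,N})$ to its $t=0$ value. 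An exponential tilt computation against $\Pam^N_{c_0}$, controlled by $H(\mu^N|\Pam^N_{c_0}) = O(N)$ via the entropy inequality and Lemma \ref{lem: order of R}, identifies this initial value as $(\lambda_j\Phi(c_0))^{1/\alpha}$, producing the Dirichlet boundary condition $\Phi(\rho(s,x_j)) = \Phi(c_0)$ (see Lemma \ref{lem: boundary cond, super}).

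Once every subsequential limit is shown to satisfy the weak formulation of \eqref{pde: k alpha} with initial datum $\pi$, I would invoke the energy estimates of Section \ref{sec: energy section} and the absolute continuity statement of Lemma \ref{lem: abs continuity} to place $\rho$ in the admissible function class, so that the uniqueness Theorem \ref{thm: uniqueness} applies. Uniqueness of weak solutions then upgrades tightness to convergence in probability, yielding \eqref{HDL: alpha g}. I expect the super-critical boundary derivation in Step~3 to be the main obstacle: unlike the bulk and critical parts, it relies crucially on a global mass-conservation argument coupled with sharp initial-measure asymptotics, rather than on standard local replacement machinery alone.
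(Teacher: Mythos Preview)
Your proposal is correct and follows essentially the same route as the paper: martingale decomposition and vanishing quadratic variation, tightness, bulk replacement via local $1$- and $2$-block estimates, the boundary chain \eqref{boundary_intro} at defects, the mass-conservation plus entropy-tilt argument for the super-critical Dirichlet condition (Lemma~\ref{lem: boundary cond, super}), absolute continuity and energy estimates, and finally uniqueness. The only cosmetic difference is that the paper proves tightness via direct oscillation bounds (Lemma~\ref{lem: tightness}) rather than Garsia--Rodemich--Rumsey, and in Step~2 of Lemma~\ref{lem: boundary cond, super} localizes the mass-conservation argument with a test function near $x_j$ to handle multiple super-slow defects simultaneously.
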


\begin{Thm}
\label{main thm: bounded g}
Assume $g$ is bounded.
Then, for any $t > 0$, test function
$G\in C(\T)$, and $\delta>0$, we have
\begin{equation} \label{HDL: bounded g}
\lim_{N\to \infty} \P_N
\Big[ \Big|
\langle G, \pi^N_t \rangle
-
\langle G, \pi_t \rangle
\Big|
>\delta
 \Big]
 =0,
\end{equation}
where $\pi_t(dx)=\rho(t,x)dx + \sum_{j\in J_c} \m_{j}(t)\, \delta_{x_j} (dx)$ is the unique weak solution to 
\begin{equation} \label {pde: bounded g}
\begin{cases}
\partial_t \pi_t
=
\partial_{xx} \Phi(\rho(t,x)),
& x\in \T,\ t\in(0,T),\\
  \pi_t \vert_{t=0} = \pi,
\quad  \Phi(\rho(t,x_j)) \leq 1 / \lambda_j,
&  t\in(0,T), \, j\in J_c,\\
\m_j(t) = \m_j(t)\id_{\Phi(\rho(t,x_j)) = 1/ \lambda_j} ,
 & t\in(0,T), \, j\in J_c.
 \end{cases}
\end{equation}
\end{Thm}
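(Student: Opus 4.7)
The plan is to implement the entropy method of \cite{KL}, adapted to the presence of a finite number of macroscopic defects and the evanescent atomic masses that may form at critical slow sites. I would organize the proof into three blocks: (i) tightness of the trajectories $\{\pi^N_\cdot\}$ in $D([0,T],\Mb)$; (ii) identification of every subsequential limit as a weak solution of \eqref{pde: bounded g}; (iii) conclusion via the uniqueness Theorem \ref{thm: uniqueness, bounded g}, which upgrades distributional convergence along subsequences to the convergence in probability \eqref{HDL: bounded g}.

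For tightness, I would start from the Dynkin decomposition, for $G\in C^\infty(\T)$,
\[
\langle G,\pi^N_t\rangle = \langle G,\pi^N_0\rangle + \int_0^t \frac{1}{N}\sum_{k\in \T_N}(\Delta_N G)(k/N)\,g_{k,N}(\eta_s(k))\,ds + M^N_t(G),
\]
where $\Delta_N$ is the discrete Laplacian and $M^N_t(G)$ is a martingale whose quadratic variation is $O(N^{-1})$ because $g\leq g_\infty=1$. Boundedness of $g$ makes the drift uniformly controlled, and the total particle number is $O(N)$ by the entropy inequality discussion in Section \ref{subsec: relative entropy} (since $J_s=\emptyset$). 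Aldous' criterion plus this mass bound then gives tightness in the vague topology on $\T$.

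For limit-point identification, in the bulk $\T\setminus\D_c$ I would apply the 1-block and 2-block estimates (Lemmas \ref{lem: 1 block} and \ref{lem: 2 blocks}) to replace $g(\eta_s(k))$ by $\Phi(\eta^{\theta N}_s(k))$ and then by $\Phi(\rho(s,x))$, yielding the parabolic equation $\partial_t\rho=\partial_{xx}\Phi(\rho)$ tested against functions supported away from $\D_c$. Near each slow site $x_j\in\D_c$, I would establish the one-sided boundary relation \eqref{boundary_intro} by combining Lemma \ref{replacement: slow site} (to identify the rate $\lambda_j^{-1}g(\eta_s(k_{j,N}))$ with the neighboring $g(\eta_s(k_{j,N}\pm1))$), the one-sided 1-block replacement (Lemma \ref{local replace, slow site}), and the 2-block estimate to transfer to a macroscopic window average. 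Since $g\leq 1$, this immediately gives the inequality $\Phi(\rho(t,x_j))\leq 1/\lambda_j$ in the limit.

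The main obstacle is deriving the complementary slackness $\m_j(t)\bigl(1/\lambda_j-\Phi(\rho(t,x_j))\bigr)=0$, which encodes the bouncing between periodic and Dirichlet boundary behavior. I plan to prove its microscopic analog
\[
\frac{\eta_t(k_{j,N})}{N}\left(\frac{1}{\lambda_j}-\frac{g(\eta_t(k_{j,N}))}{\lambda_j}\right) \longrightarrow 0
\]
in an appropriate weak-in-$t$ sense, leveraging that if the atomic mass is macroscopic then $\eta_t(k_{j,N})$ must be large and $g(\eta_t(k_{j,N}))$ correspondingly close to $g_\infty=1$. The rigorous argument requires a refined entropy/Dirichlet-form bound tailored to the defect site (Lemma \ref{lem: boundary, bounded}), which is the novel technical input absent from the homogeneous theory. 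Passing this identity through to the limit together with the bulk replacement yields the PDE \eqref{pde: bounded g}. Finally, I would verify via the energy estimates of Section \ref{sec: energy section} that any limit has absolutely continuous part with $\Phi(\rho)\in L^2(0,T;H^1)$, using attractiveness (Section \ref{sec: coupling}) to secure $L^2$ absolute continuity of the bulk part through Lemma \ref{lem: abs continuity}; with this regularity in hand, Theorem \ref{thm: uniqueness, bounded g} identifies the limit uniquely and completes the argument.
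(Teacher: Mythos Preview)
Your proposal is correct and follows essentially the same route as the paper: tightness, absolute continuity via attractiveness, bulk replacement, boundary replacement at defect sites, energy estimate, and uniqueness, invoking precisely the lemmas the paper uses. One small clarification: the microscopic complementary slackness does not require a new entropy/Dirichlet-form bound; the paper obtains \eqref{eqn: mj=0_if_rho} by the elementary dichotomy that either $\eta_s(k_{j,N})\geq A$ (forcing $1-g(\eta_s(k_{j,N}))<\varepsilon$) or $\eta_s(k_{j,N})<A$ (forcing the local mass to be $O(A/N)+$ neighboring mass), and only afterward invokes the already-proved local replacement Lemma~\ref{local replace, slow site} to swap $\lambda_j^{-1}g(\eta_s(k_{j,N}))$ for $\Phi(\eta_s^{\theta N,+}(k_{j,N}))$; also, to pass the identity through to the limit $Q$ one must write the atomic mass as $\langle F_\varepsilon,\pi^N_s\rangle\wedge 1$ with a continuous tent function $F_\varepsilon$ rather than $N^{-1}\eta_s(k_{j,N})$, so that the relevant functional is continuous on $D([0,T],\Mb)$.
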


We now define the weak solutions to the limit PDE \eqref{pde: k alpha} and \eqref{pde: bounded g}.
\begin{Def} \label{def: weak deriv}
Let $f(t,x)$ and $g(t,x)$ be in $L_{loc}^1([0,T]\times D)$ where $D$ is a domain of $x$. We say $f$ is weakly differentialble with respect to $x\in D$ if for all 
$G(t,x) \in C_c^{0,1} ( [0,T]\times D)$ that
\[
\int_0^T \int_\T 
\partial_x G(t,x) f(t,x) dx dt
=
-\int_0^T \int_\T 
G(t,x) g(t,x) dx dt;
\]
The weak derivative will be denoted by $\partial_x f(t,x)$ and $\partial_x f(t,x):=g(t,x)$.
\end{Def}
We comment that if $f(t,x)$ is weakly differentiable with respect to $x\in D$ as defined above,
 then for a.e.\ $t\in[0,T]$, $f(t,\cdot)$ is absolutely continuous and $f(t,b) - f(t,a)= \int_a^b \partial_x f(t,x)\, dx$ for all connected $a,b\in D$. In particular, the evaluation of
$f(t,x') = f(t,x)\vert_{x=x'}$ at a given value $x'\in D$ is well defined.

\begin{Def} \label{def: weak sln}
We say $\pi_t(dx)=\rho(t,x)dx + \sum_{j\in J_c} \m_{j}(t)\, \delta_{x_j} (dx)$
 is a weak solution to the system \eqref{pde: k alpha}
if
\begin{enumerate}
\item $\rho(t,x)$ is in $L^2([0,T]\times \T)$ and $\Phi(\rho(t,x))$ is weakly differentiable with respect to $x\in \T$ with 
$\partial_x \Phi(\rho(t,x))\in L^2( [0,T]\times \T)$;
\item $\Phi(\rho(t,x_j))  = \Phi(c_0)$, for almost all $t\in (0,T)$ and all $x_j\in \D_s$;
\item $\m_{j}(t) = \left( \lambda_j \Phi(\rho(t,x_j)) \right)^{1/\alpha}$ for almost all $t\in (0,T)$ and $j\in J_c$;
\item 
for all $G(t,x) \in C_c^\infty \left( [0,T)\times (\T\setminus \D_s)  \right)$
\begin{equation} \label {weak sln, def}
\begin{split}
\int_0^T \int_\T \partial_tG(t,x) 
\pi_t(dx) dt
+ \int_\T G(0,x) \pi(dx) 
+\int_0^T \int_\T 
\partial_{xx}G(t,x) \Phi(\rho(t,x)) dx dt
=0.
\end{split}
\end{equation}
\end{enumerate}
\end{Def}

\begin{Def} \label{def: weak sln, bounded g}
We say $\pi_t(dx)=\rho(t,x)dx + \sum_{j\in J_c} \m_{j}(t)\, \delta_{x_j} (dx)$
 is a weak solution to the system \eqref{pde: bounded g}
if
\begin{enumerate}
\item $\rho(t,x)$ is in $L^2([0,T]\times \T)$ and $\Phi(\rho(t,x))$ is weakly differentiable with respect to $x\in \T$ with 
$\partial_x \Phi(\rho(t,x))\in L^2( [0,T]\times \T)$;
\item 
$\Phi(\rho(t,x_j))  \leq 1 / \lambda_j $, for almost all $t\in (0,T)$ and all $x_j\in \D_c$;
\item 
$\m_j(t) = \m_j(t)\id_{\Phi(\rho(t,x_j)) = 1/ \lambda_j} $ for almost all $t\in (0,T)$ and $j\in J_c$;
\item 
the weak formulation \eqref{weak sln, def} holds 
for all $G(t,x) \in C_c^\infty \left( [0,T)\times \T   \right)$.
\end{enumerate}
\end{Def}

\begin{Rmk}
\label{rmk on w-deriv}
In Definitions \ref{def: weak sln} and \ref{def: weak sln, bounded g},
we have assumed 
 $\Phi(\rho(t,x))$ to be weakly differentialbe with respect to $x\in\T$. 
 Note that $\Phi(\cdot)$ is invertible 
 and the inverse function 
 $\Phi^{-1}(\cdot)$ is a continuous mapping from $ [0,r_g)$ to $[0,\infty)$
 where $r_g=\infty$ for $g$ of $n^\alpha$ type and $r_g=1$ for $g$ bounded (cf.~Section \ref{sec: invariant measures}).
 Then $\rho(t,x) = \Phi^{-1}\left( \Phi(\rho(t,x))\right)$ is continuous in $x$ for a.e.~$t\in[0,T]$ (viewed as $[0,\infty]$ valued function for $g$ bounded).
 In particular, the evaluation of $\Phi(\rho(t,x))$ at $x=x_j$ may be written in terms 
 $\rho(t,x_j)$ instead.

Moreover, notice that, under the settings of Theorems 
 \ref{main thm: k alpha} and \ref{main thm: bounded g}, it holds $\|\rho\|_\infty<\infty$; see Lemma \ref{lem: abs continuity}.
As $\Phi^{-1}(\cdot)$ is Lipschitz on 
$[0,\phi]$ where $\phi = \Phi(\|\rho\|_\infty)$ (cf.~Section \ref{sec: invariant measures}), we have that
$\rho(t,x)$ is also weakly differentiable with respect to $x\in \T$ and 
$\partial_x \rho(t,x)\in L^2( [0,T]\times \T)$.
\end{Rmk}

To illustrate the relation between boundary conditions and effects on defect sites, we consider the case with  only a single defect site. 

\begin{Exam}[Effects of a single slow site]
\label{sec: single slow site}
\rm
As an example to illustrate how defects affect the macroscopic bulk evolution, we look at systems with a single defect site.   
Without loss of generality, we may assume the defect location is at $0$ and the altered jump rate is $(\lambda N^\beta)^{-1} g(\xi(0))$.

Consider first $g$ of $n^\alpha$ type.
By Theorem \ref{main thm: k alpha}, the hydrodynamic limit $\pi_t$ is governed by the PDE \eqref{pde: k alpha}. 
As the defect site is at $x=0$, we have $\pi_t(dx) = \rho(t,x)dx$ when restricted to open interval $(0,1)$. 
Indeed, \eqref{pde: k alpha} is now reduced to a nonlinear heat equation regarding only $\rho(t,x)$ with different boundary conditions at $x=0$ and $x=1$ depending on the value of $\beta$.
Precisely, we have the following.
\begin{enumerate}
\item When $\beta<\alpha$, the defect site is invisible in the limit and \eqref{pde: k alpha} becomes
\begin{equation} \label {pde: regular}
\begin{cases}
\partial_t \rho(t,x)
=
\partial_{xx} \Phi(\rho(t,x)),
\quad x\in \T,\ t\in(0,T),\\
\rho(0,x) = \rho_0(x),
\end{cases}
\end{equation}
that is $\rho(t,x)$ satisfies periodic boundary conditions.
\item When $\beta=\alpha$, we have $\pi_t(dx) = \rho(t,x)dx + \m(t)\delta_0(dx)$ with 
$\m (t) = ( \lambda \Phi(\rho(t,0)) )   ^{1/\alpha}$.
 The atomic mass $\m(t)$ can also be expressed in terms of the its initial value $\m_0$ and net change of the bulk mass as the total mass is conserved:
\[
\m_0+\int_0^1 \rho_0(x)dx
=
\m(t)+\int_0^1 \rho(t,x)dx,
\quad
\text{for all }t>0.
\]
Therefore, we have 
$ \Phi(\rho(t,0)) =\lambda^{-1} \big[ \m_0 + \int_0^1 ( \rho_0 (x) -  \rho(t,x) ) dx  \big ]^{\alpha}$.
Noticing that $x=0$ and $x=1$ coincide on $\T$,
we obtain
\[
\begin{cases}
\partial_t \rho(t,x)
=
\partial_{xx} \Phi(\rho(t,x)) , \quad x\in (0,1),\ t\in(0,T), \\
\rho(0,x) = \rho_0(x), \\
 \Phi(\rho(t,0)) = \Phi(\rho(t,1))
=\lambda^{-1} \big [ \m_0 + \int_0^1 \left( \rho_0 (x) -  \rho(t,x) \right) dx \big ]^{\alpha}.
\end{cases}
\]
We comment that, in a system with more than one critical slow sites ($\beta=\alpha$), conservative of mass is not sufficient to determine the atomic masses individually. For a closed equation, one needs to stay with a form such as \eqref{pde: k alpha}.
\item When $\beta>\alpha$, as $\D=\D_s=\{0\}$, the PDE \eqref{pde: k alpha} is
\[
\begin{cases}
\partial_t \rho(t,x)
=
\partial_{xx} \Phi(\rho(t,x)),
\quad x\in (0,1),\ t\in(0,T),\\
\rho(0,x) = \rho_0(x),\\
\rho(t,0) = \rho(t,1) = c_0.
\end{cases}
\]
\end{enumerate}

\smallskip
We now turn to case when $g$ is bounded.
By Theorem \ref{main thm: bounded g}, the macroscopic flow of $\pi_t$ is described by \eqref{HDL: bounded g}.
Notice that here we have $\beta\leq 0$.
When the defect is a fast site, that is $\beta<0$ or $\beta=0$ with $\lambda<1$, the defect site is invisible macroscopically and the limit evolution is the usual nonlinear heat equation  \eqref{pde: regular} with periodic boundary condition. When the defect is a slow site, that is $\beta=0$ and $\lambda>1$,
we can write $\mathfrak{m}(t)$, the atomic mass at the slow site, in term of `mass conservation' as in part (2) above to form a closed equation of $\rho(t,x)$:
\[
\begin{cases}
\partial_t \rho(t,x)
=
\partial_{xx} \Phi(\rho(t,x)),
\quad x\in \T,\ t\in(0,T),\\
   \rho(0,x) = \rho_0(x),
  \quad
   \rho(t,0) =\rho(t,1) \leq \Phi^{-1}(1 / \lambda),\\
\m(t) = \m(t)\id_{\rho(t,0) = \Phi^{-1} (1/ \lambda)},\quad
\m(t)=\m_0 + \int_0^1 \left( \rho_0 (x) -  \rho(t,x) \right) dx.
\end{cases}
\]
Informally, during the evolution, we observe periodic boundary conditions when $\m(t)=0$ and  Dirichlet boundary conditions $\rho(t,0)=\rho(t,1)=\Phi^{-1}(1/\lambda)$ when $\m(t)>0$.
\end{Exam}

\subsection{Remarks on general defects}
In our model, we have assumed that the jumping rate at a defect site $k_{j,N}$ is slowed down by a factor of $\lambda_j N^{\beta_j}$. However, the form of $\lambda_j N^{\beta_j}$ is only a choice for convenience. In fact, 
Theorems \ref{main thm: k alpha} and \ref{main thm: bounded g}
hold for more general choices of slow down factors, as can be perused.

We may assume the jumping rate at a defect $k_{j,N}$ is $g(\cdot)/\beta_{j,N}$. (In terms of the current setting $\beta_{j,N} = \lambda_j N^{\beta_j}$.) Assume that $\beta_{j,N}>0$ and $\lim_{N\to \infty} \beta_{j,N} / g(N)$ exists (might be $\infty$ though). To extend Theorems \ref{main thm: k alpha} and \ref{main thm: bounded g} to the new setting, we need to clarify the sets of $J_s$, $J_c$ and $\lambda_j$ for $j\in J_c$.
When $g$ is of $n^\alpha$ type, $J_c=\{j\in J: \beta_{j,N}\sim N^\alpha \}$ and when $g$ is bounded, $J_c=\{j\in J: \lim_{N\to\infty}\beta_{j,N}>1\}$.
In either case of $g$, the super-critical index set $J_s$ consists of $j$'s such that 
$\beta_{j,N} \gg g(N)$ and $J_b = J\setminus \{J_s\cup J_c\}$.
The constant $\lambda_j$ for $j\in J_c$ can be defined as $\lambda_j :=\lim_{N\to\infty} \beta_{j,N} / g(N)$.

\section{Proof outline}
\label{section: martingale}

In this section, we prove  
Theorems \ref{main thm: k alpha} and \ref{main thm: bounded g}.
We begin with analyzing $\langle G, \pi^N_t\rangle$ by computing its stochastic
differential in terms of certain martingales.

\subsection{Stochastic differentials}
Let $G$ be a smooth function on $[0,T] \times \T$ and we write $G_t(x) := G(t,x)$.
For $t\geq 0$, consider $\wpi_t^N$, the emprical measure associated with $\eta_t$ on $\T_N$:
\begin{equation} \label{eqn: pi_t hat}
\wpi_t^N(dx)
:=
\dfrac 1 N 
\sum_{k\in \T_N}
\eta_t (k) \delta_{k/N}(dx).
\end{equation}
Note that if $G$ has compact support on $[0,T] \times \T\setminus \D_s$,
 it holds $\langle G_t, \wpi_t^N\rangle = \langle G_t, \pi_t^N\rangle$ for all $N$ large (cf.~\eqref{pi_t w/o D_s}). 

We have
\begin{equation*}
M^{N,G}_t
=
\left\langle G_t,   \wpi^N_t \right\rangle
-
\left\langle G_0,  \wpi^N_0 \right\rangle
-
\int_0^t 
\Big\{
 \left\langle  \partial_s G_s, \wpi_s^N\right\rangle + N^2L_N \left\langle G_s,   \wpi^N_s \right\rangle 
 \Big\}ds
\end{equation*}
is a mean zero martingale.
Denote the discrete Laplacian $\Delta_{N}$ by
\begin{equation*}
\Delta_N G\Big(\frac k N\Big)
:=
N^2 \Big(G\Big(\frac {k+1}N\Big)+G\Big(\frac {k-1}N\Big)-2G\Big(\frac k N\Big)\Big).
\end{equation*}
Then, for $N$ large, we compute 
\begin{equation} \label{gen_comp}
N^2 L _N \left\langle G,  \wpi^N_s \right\rangle 
= 
\dfrac1N \sum_{k\in \T_N}
 \Delta_{N} G_s\Big(\frac k N \Big) g_{k,N}(\eta_s(k)) .
\end{equation}

The quadratic variation of $M^{N,G}_t$ is given by
\begin{equation*}
\langle M^{N,G} \rangle_t
=
\int_0^t 
\left\{
N^2L_N \left(\left\langle G_s,  \wpi^N_s \right\rangle ^2 \right)
-
2 \left\langle G_s,  \wpi^N_s \right\rangle
 N^2 L_N \left\langle G_s,  \wpi^N_s\right\rangle 
\right\}
ds
\end{equation*}
which by standard calculation equals
\begin{equation*} \label {quadratic variation ln k}
\int_0^t 
\sum_{k\in \T_N}
g_{k,N}(\eta_s(k)) 
\left\{
\Big(G_s\big(\frac {k+1}N\big) - G_s\big(\frac kN\big)\Big)^2
+
\Big(G_s\big(\frac {k-1}N\big) - G_s\big(\frac kN\big)\Big)^2
\right\}
ds.
\end{equation*}

This variation may be bounded as follows.
\begin{Lem} \label{lem: martingale bounds} 
For any test function $G(x)\in C^\infty ( \T)$, there is a constant $C$ independent of $N$ such that, for all $N$ large,
 \[
 \sup_{0\leq t\leq T}\E_N \langle M^{N,G}\rangle_t \leq C N^{-1}.
 \]
\end{Lem}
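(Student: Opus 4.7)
The strategy is the standard one: bound the differences $|G((k\pm 1)/N) - G(k/N)|$ using the smoothness of $G$, pull out a factor $N^{-2}$, and then bound the expected total rate $\E_N\bigl[\sum_k g_{k,N}(\eta_s(k))\bigr]$ by $O(N)$ using attractiveness and the stochastic-domination assumptions in Condition \ref{condition on mu N}.

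More precisely, since $G \in C^{\infty}(\T)$, a Taylor expansion gives $|G((k\pm 1)/N) - G(k/N)| \leq \|G'\|_{\infty} N^{-1}$, so from the formula for $\langle M^{N,G}\rangle_t$ one obtains
\[
\langle M^{N,G}\rangle_t \;\leq\; \frac{2\|G'\|_{\infty}^2}{N^2} \int_0^t \sum_{k\in\T_N} g_{k,N}(\eta_s(k))\,ds.
\]
It remains to show $\E_N\bigl[\sum_{k\in\T_N} g_{k,N}(\eta_s(k))\bigr] \leq CN$ uniformly in $s\in[0,T]$.

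In the $n^{\alpha}$ setting, Condition \ref{condition on mu N}(3) together with the attractiveness of the dynamics (Section \ref{sec: coupling}) gives $\mu_s^N \leq \Pam_{c'}^N$ for all $s \geq 0$. Since each $g_{k,N}$ is monotone increasing, for every $k \in \T_N$,
\[
\E_N[g_{k,N}(\eta_s(k))] \;\leq\; \E_{\Pam_{c'}^N}[g_{k,N}(\eta(k))] \;=\; \Phi(c'),
\]
the last equality using the marginal $\Pb_{\lambda_j N^{\beta_j}\Phi(c')}$ at $k=k_{j,N}$, $\Pb_{\Phi(c')}$ otherwise, and the identity $E_{\Pb_{\phi}}[g(X)] = \phi$ from \eqref{Phi_eqn} (divided, when appropriate, by the factor $\lambda_j N^{\beta_j}$). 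Summing over $k$ yields $N\Phi(c')$.

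In the bounded $g$ setting, I will split the sum across $\D_{c,N}$ and its complement. At a critical slow site $k_{j,N}\in\D_{c,N}$ one has $g_{k_{j,N},N}(\eta_s(k_{j,N})) = g(\eta_s(k_{j,N}))/\lambda_j \leq g_{\infty}/\lambda_j$, which is bounded and, since $|\D_{c,N}|$ is finite, contributes only $O(1)$ to the sum. For $k\notin\D_{c,N}$, Condition \ref{condition on mu N}(3$'$) and the reservoir-extended coupling from Section \ref{sec: coupling} give $\mu_s^N \leq \overline{\kappa}_{c'}^N$ on these coordinates, so again $\E_N[g_{k,N}(\eta_s(k))] \leq \E_{\kappa_{c'}^N}[g_{k,N}(\eta(k))] = \Phi(c')$ by the same computation. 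Summing yields a total of order $N$. Combining with the prefactor $N^{-2}$ and integrating in time over $[0,T]$ gives $\sup_{0\leq t\leq T}\E_N\langle M^{N,G}\rangle_t \leq CN^{-1}$.

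There is no real obstacle here; the only mildly delicate point is that in the bounded $g$ case the stochastic bound does not hold at the critical defect sites, but this is immaterial because the rate $g_{k,N}$ is trivially bounded there and the defect set is finite.
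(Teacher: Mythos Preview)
Your proof is correct and follows essentially the same route as the paper: bound the squared increments of $G$ by $\|G'\|_\infty^2 N^{-2}$, then show $\E_N\bigl[\sum_{k}g_{k,N}(\eta_s(k))\bigr]=O(N)$ uniformly in $s$. The paper packages this second step as a separate lemma (Lemma~\ref{lem: total nmbr bound}, item \eqref{expected_g_all}); there, regular sites are handled via the Lipschitz bound $g(n)\leq g^*n$ together with the $O(N)$ total-mass estimate from the entropy inequality, while defect sites are handled by attractiveness. Your version instead uses attractiveness uniformly at every site to get the clean identity $\E_{\Pam_{c'}^N}[g_{k,N}(\xi(k))]=\Phi(c')$ (respectively $\E_{\kappa_{c'}^N}[g_{k,N}(\xi(k))]=\Phi(c')$ off $\D_{c,N}$, trivial bound on $\D_{c,N}$), which is slightly more direct but amounts to the same argument.
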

\begin{proof}
Since $G$ is smooth, we obtain, for $N$ large
\[
\E_N\langle M^{N,G} \rangle_t
 \leq 
 2 (\| \partial_x G\|_\infty)^2 N^{-1}
\E_N
 \Big [
 \dsp\int_0^t 
\dfrac1N \sum_{k\in \T_N}
g_{k,N}(\eta_s (k))
 ds  \Big ] .
\]
Then, the result follows from
\eqref{expected_g_all} in
 the next Lemma \ref{lem: total nmbr bound}.
\end{proof}

It will be useful to bound local particle numbers and rates.  Although there are different ways to prove parts of the following statements, say using the entropy inequality and $g(n)/n^\alpha <C$ for $0\leq \alpha\leq1$, we will use attractiveness to facilitate a short proof.

\begin{Lem} \label{lem: total nmbr bound}
We have the following:
\begin{enumerate}
\item 
The expectation of total mass at all but super-critical sites is uniformly bounded:
\begin{equation} \label{expected_number}
\sup_{N\in \N} \sup_{t\geq 0} 
\E_N \Big [ \dfrac1N \sum_{k\in \T_N\setminus \D_{s,N}} \eta_t(k) \Big ]
< \infty;
\end{equation}
\item
The expected particle number at each regular site $k\notin \D_N$ is uniformly bounded:
\begin{equation}
 \label {nbhd expected nmbr}
\sup_{N\in \N} 
 \sup_{t\geq 0} 
\sup_{k\notin \D_N}
 \E_N \Big [ \eta_t(k) \Big ]
<\infty;
\end{equation}

\item
The expectation of weighted jump rate $N^{-1}g_{k,N}$ vanishes uniformly as $N\to \infty$:
\begin{equation} 
\label{expected g bound}
\lim_{N\to \infty} \sup_{t\geq 0} 
\sup_{k\in \T_N}
\E_N \Big [ \dfrac1N g_{k,N} \left(\eta_t(k) \right)\Big ]
=0;
\end{equation}
\item 
The expectation of total weighted jump rate   is uniformly bounded:
\begin{equation} \label {expected_g_all}
\sup_{N\in \N} \sup_{t\geq 0} 
\E_N \Big [ \dfrac1N \sum_{k\in \T_N} g_{k,N}(\eta_t(k)) \Big ]
< \infty.
\end{equation}
\end{enumerate}
\end{Lem}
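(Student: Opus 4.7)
The plan is to reduce every expectation to a computation under a product dominating measure via the basic coupling from Section~\ref{sec: coupling}, combined with two explicit ingredients: the marginal mean asymptotics in Lemma~\ref{lem: order of R} and the fugacity identity $E_{\Pb_\phi}[g(X)] = \phi$ built into the construction of the $\Pb_\phi$ family (cf.\ \eqref{Phi_eqn}). Recall that attractiveness gives $\mu^N_t \leq \Pam^N_{c'}$ for all $t\geq 0$ in the $n^\alpha$ setting, and $\mu^N_t \leq \overline\kappa^N_{c'}$ on coordinates outside $\D_{c,N}$ in the bounded setting.

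For (1) and (2), I would first apply attractiveness to the coordinate-increasing function $\xi \mapsto \xi(k)$, obtaining $\E_N[\eta_t(k)] \leq R(\phi_k)$ with $\phi_k = \Phi(c')$ at a regular site and $\phi_k = \lambda_j N^{\beta_j}\Phi(c')$ at a defect. Part~(2) follows immediately since $R(\Phi(c')) = c'$. For~(1), the $N - n_0$ regular sites contribute $(N-n_0)c' = O(N)$; by Lemma~\ref{lem: order of R}, each defect in $\D_c$ contributes $R(\phi_k) = O(N)$ and each defect in $\D_b$ contributes $o(N)$ in the $n^\alpha$ setting, or $O(1)$ in the bounded setting. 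Since only finitely many defects are involved and $\D_{s,N}$ is excluded, dividing by $N$ yields the uniform bound.

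For (3) and (4), the key step is to apply attractiveness to the increasing function $g$, yielding
\[
\E_N\bigl[g(\eta_t(k))\bigr] \;\leq\; E_{\Pb_{\phi_k}}[g(X)] \;=\; \phi_k.
\]
Dividing by $N$ and, at a defect, by $\lambda_j N^{\beta_j}$, the $\phi_k$-factor simplifies: uniformly in $k\in\T_N$ one gets $\E_N[g_{k,N}(\eta_t(k))/N] \leq \Phi(c')/N$, immediately proving~(3). Summation then gives $(N-n_0)\Phi(c')/N + n_0 \cdot O(1/N) = O(1)$, proving~(4).

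The subtle point, and the only real obstacle, is the bounded-$g$ case at sites in $\D_{c,N}$, where $(3')$ provides no stochastic domination. For~(3) and~(4) this is harmless, since $g_{k,N} = g/\lambda_j \leq g_\infty/\lambda_j$ is uniformly bounded and there are only finitely many such sites, contributing $O(1/N)$ to each sum. For~(1) however, I would invoke mass conservation $\sum_k \eta_t(k) \equiv \sum_k \eta_0(k)$ to reduce to bounding $\E_{\mu^N}[\eta_0(k_{j,N})]$, and then apply the entropy inequality with $F = \epsilon\, \eta(k_{j,N})$ for $\epsilon>0$ chosen so small that $e^\epsilon \lambda_j \Phi(c_0) < r_g = 1$ (possible because $c_0 < R(1/\lambda_{\max})$). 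The resulting exponential moment $E_{\Pam^N_{c_0}}[e^F]$ is then finite, and combined with $H(\mu^N\vert\Pam^N_{c_0}) = O(N)$ this delivers $\E_{\mu^N}[\eta_0(k_{j,N})] = O(N)$, sufficient since $|\D_{c,N}|<\infty$.
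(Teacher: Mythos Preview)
Your argument is correct. For items~(2)--(4) you and the paper proceed identically: attractiveness plus the fugacity identity $E_{\Pb_\phi}[g(X)]=\phi$ yields $\E_N[g_{k,N}(\eta_t(k))]\leq \Phi(c')$ at every site where the coupling applies, and the uncoupled sites $\D_{c,N}$ in the bounded case are handled by the trivial bound $g_{k,N}\leq g_\infty/\lambda_j$. Your derivation of~(4) by direct summation of the per-site bound is in fact a shade cleaner than the paper's route, which bounds the regular-site contribution by $g(n)\leq g^* n$ and then invokes~(1).

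The one genuine difference is item~(1). The paper proves it uniformly (both rate settings) by the entropy inequality applied at time~$t$, using entropy non-increase $H(\mu^N_t\vert\Pam^N_{c_0})\leq H_N$ and the product structure of $\Pam^N_{c_0}$ to control $\ln E_{\Pam^N_{c_0}}[e^{\gamma\sum_k\xi(k)}]$. You instead use attractiveness in the $n^\alpha$ setting (relying on Lemma~\ref{lem: order of R} for the $\D_c$ contributions), and in the bounded setting invoke mass conservation on the full torus (valid since $\D_{s,N}=\emptyset$ there) to reduce to a time-zero entropy estimate at the finitely many $\D_{c,N}$ sites. Both routes work; the paper's is more uniform across the two rate regimes, while yours avoids appealing to entropy decrease and makes the role of the stochastic domination more explicit.
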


\begin{proof} 
First, in both rate $g$ settings,
the total mass estimate \eqref{expected_number} follow directly from the initial bound of the entropy $H(\mu^N | \Pam_{c_0}^N)=O(N)$ and the entropy inequality; see Section \ref{subsec: relative entropy} for an explicit proof.

Moreover, \eqref{nbhd expected nmbr} and \eqref{expected g bound} follow by straightforwardly applying attractiveness, $\mu_t^N \leq \Pam_{c'}^N$ and
$\mu_t^N \leq \overline\kappa_{c'}^N$ for $g$ of $n^\alpha$ type and bounded type respectively
(see Section \ref{sec: coupling}).
  Indeed,  in the $g(n)\sim n^\alpha$ case,, we have, for all $k\not\in \D_N$, that $\E_N[\eta_t(k)] \leq E_{\Pam_{c'}^N}[\xi(k)] = c'$.  Also, as $g$ is increasing, 
$\E_N[g_{k,N}(\eta_t(k))] \leq E_{\Pam_{c'}^N}[g_{k,N}(\xi(k))] = \Phi(c')$ (cf. \eqref{Phi_eqn}).
When $g$ is bounded, we have $\E_N[\eta_t(k)] \leq E_{\overline\kappa_{c'}^N}[\xi(k)] = c'$ for regular sites $k\notin \D_N$. For the bound of $\E_N[g_{k,N}(\eta_t(k))]$, it holds trivially that $\E_N[g_{k,N}(\eta_t(k))] \leq g_\infty=1$ when $k\notin \D_N$ and $\E_N[g_{k,N}(\eta_t(k))] \leq1/ \lambda_j$ when $k=k_{j,N}$ with $\beta_j = 0$. When $k=k_{j,N}$ with $\beta_j<0$, we use attractiveness again $\E_N[g_{k,N}(\eta_t(k))] \leq E_{\overline\kappa_{c'}^N}[g_{k,N}(\xi(k))] = \Phi(c')$.

Finally, to show \eqref{expected_g_all},
 we separate the sum of $k$ into two sums consisting of the defect sites $\D_N$ and  the regular sites respectively:  The sum over $\D_N$ is bounded via \eqref{expected g bound} and  that $\D_N$ has finite cardinality $n_0$ independent of $N$.  
 Recall that $g_{k,N}(\cdot) = g(\cdot)$ for $k\notin \D_N$.
 Now, bounding $g(n)\leq g^*n$, the sum over the regular sites is bounded due to \eqref{expected_number}.
\end{proof}

\subsection{Proof outline of Theorems \ref{main thm: k alpha} and \ref{main thm: bounded g}}
\label{proof_section, critical}
We sketch the proofs of Theorems \ref{main thm: k alpha} and \ref{main thm: bounded g}.  Let $Q^N$ be the
probability measure on the trajectory space $D([0,T],\Mb )$ governing
$\pi_{\cdot}^N$ when the process starts from $\mu^N$.  By Lemma
\ref{lem: tightness}, the family of measures
$\left\{Q^N\right\}_{N\in \N}$ is tight with respect to the uniform
topology, stronger than the Skorokhod topology.
Let now $Q$ be any limit measure. 
 For both cases of $g$, we first show that $Q$ is supported on a class of weak solutions to the associated nonlinear PDE \eqref{pde: k alpha} or \eqref{pde: bounded g}. Then, by uniqueness results from Section  \ref{section: uniqueness}, we prove the desired results 
\eqref{HDL: alpha g} and \eqref{HDL: bounded g}.
\vskip .1cm

{\it Step 1.}
Let $G(t,x)$ be a smooth function with compact support in $[0,T) \times (\T\setminus \D_s)$.
Notice that $\langle G_t,\pi_t^N\rangle = \langle G_t,\wpi_t^N\rangle$ and
recall the martingale $M^{N,G}_t$ and its quadratic variation $\langle M^{N,G}\rangle_t$ in the last section.
By Lemma \ref{lem: martingale bounds}, we have $\E_N \big( M^{N,G}_T\big)^2 = \E_N \langle M^{N,G} \rangle_T$
vanishes as $N\to \infty$.
For each $\delta>0$, by Chebychev's inequality,
\begin{align*}
&&
\P_N
\Big[
\big| 
\big\langle G_T,  \pi^N_T \big\rangle
-
\big\langle G_0,  \pi^N_0 \big\rangle  
-
 \int_0^T \big( \big\langle \partial_s G_s,  \pi^N_s \big\rangle + N^2 L_N  \big\langle G_s,  \pi^N_s \big\rangle   \big)   ds \big|
>\delta
\Big]\\
&& 
=
\P_N
\Big[
\big| 
M_T^{N,G} \big|
>\delta
\Big]
\leq  \dfrac{1}{\delta^2}
 \E_N \big\langle M^{N,G}\big\rangle_T \ \rightarrow \ 0 \ \ {\rm as \ } N\to\infty.
\end{align*}
Note that $G_T(x)=0$ and recall the evaluation of $N^2L_N  \left\langle G_s,  \pi^N_s \right\rangle$ in \eqref{gen_comp}.
Then,
\begin{align} \label {eqn bf rplmt, discrete}
&\lim_{N\to \infty}
\P_N
\Big[
\Big| 
\left\langle G_0,  \pi^N_0 \right\rangle\\
&\ \ \ \ \ \ \ \ \ +
\int_0^T \Big\{ \left\langle \partial_s G_s,  \pi^N_s \right\rangle 
+
 \dfrac1N \sum_{k\in\T_N}
\Delta_{N} G_s\big(\frac k N \big) g_{k,N}(\eta_s(k))
   \Big\}   ds \Big|
>\delta
\Big]
=0. \nonumber
\end{align}
As $\E_N \big [ N^{-1} \sum_{k\in \T_N} g_{k,N}(\eta_t(k)) \big ]$ is uniformly bounded by \eqref{expected_g_all}, we may replace 
the discrete Laplacian $\Delta_N G_s(\cdot)$ in \eqref{eqn bf rplmt, discrete} by $\partial_{xx} G_s(\cdot)$.

Let $\D^{\e} =\cup_{j\in J} (x_j - \e, x_j+\e)$ 
and $F_\e(s,x) = \id_{\T\setminus \D^\e} (x)\partial_{ss} G(s,x)$.
To prepare for the `bulk' replacement in the next step, we will further replace  $\partial_{xx} G_s(\cdot)$ by its approximation
$F_\e(s,\cdot)$. It suffices to show
\[
\lim_{\e\to 0}
\sup_{N\in \N}
\sup_{t\geq 0}
\E_N \big [N^{-1} \sum_{k/N\in \D^{\e} } g_{k,N}(\eta_t(k)) \big] = 0.
\]
As the sum of $k$ is over a set of cardinality at most $(2\e N+1) n_0$, the desired limit follows from separating $k$'s into defects and regular ones, and applying \eqref{expected g bound} and \eqref{nbhd expected nmbr}. 
Therefore, we obtain
\begin{align}  \label {eqn before replace}
&\lim_{\e \to 0}
\lim_{N\to \infty}
\P_N
\Big[
\Big| 
\left\langle G_0,  \pi^N_0 \right\rangle\\
&\ \ \ \ \ \ \ \ \ +
\int_0^T \Big\{ \left\langle \partial_s G_s,  \pi^N_s \right\rangle 
+
 \dfrac1N \sum_{k\in\T_N}
F_\e \big(\frac k N \big) g_{k,N}(\eta_s(k))
   \Big\}   ds \Big|
>\delta
\Big]
=0. \nonumber
\end{align}

\vskip .1cm

{\it Step 2.}  We now replace the nonlinear term $g_{k,N}(\eta_s(k))$ by a
function of the empirical density of particles.  To be precise, let
$\eta^l (x) = \dfrac{1}{2l+1} \sum_{|y-x|\leq l} \eta(y)$, that is the
average density of particles in the box centered at $x$ with length
$2l+1$.
Therefore, using the Bulk Replacement Lemma (Lemma \ref{rplmt_global_lem}), we obtain from \eqref{eqn before replace},
\begin{equation}
\label{eqn after replace}
\begin{split}
&
\lim_{\e\to0}
\limsup_{\eps \to 0}
\limsup_{N\to \infty}
\P_N
\Big[
\Big| 
\left\langle G_0,  \pi^N_0 \right\rangle
+
\int_0^T \Big\{ \left\langle \partial_s G_s,  \pi^N_s \right\rangle 
\\
&\qquad\qquad\qquad
+
 \dfrac1N \sum_{k\in\T_N}
F_\e \big(s,\frac k N \big) \Phi\left(\eta_{t}^{\eps N}(k)\right)
   \Big\}   ds \Big|
>\delta
\Big]
=0.
\end{split}
\end{equation}

\vskip .1cm
{\it Step 3.}
For each $\eps>0$, take $\iota_\eps = (2\eps)^{-1} \id_{[-\eps,\eps]}$.
The average density $\eta_{t}^{\eps N}(k)$ is written as a function of the empirical measure $\pi^N_t$ 
\begin{equation*}
\eta_{t}^{\eps N}(k)
=
\dfrac{2\eps N}{2\theta N +1}
\langle \iota_{\eps}(\cdot -  k/N),  \pi^N_t \rangle.
\end{equation*}
Since $\Phi$ is Lipschitz continuous and the total number of particles has expectation of order $N$ on the bulk (cf.~Lemma \ref{lem: total nmbr bound}), we may replace $\eta_{t}^{\eps N}(k)$ by $\langle \iota_{\eps}(\cdot -  k/N),  \pi^N_t \rangle$.
Hence, we get from \eqref{eqn after replace} in terms of
the induced distribution $Q^N$ that
\begin{equation}
\label{eqn sum by integral}
\begin{split}
&
\lim_{\e\to0}
\limsup_{\eps \to 0}
\limsup_{N\to \infty}
Q^N
\Big[
\Big| 
\left\langle G_0,  \pi^N_0 \right\rangle
+
\int_0^T \Big\{ \left\langle \partial_s G_s,  \pi^N_s \right\rangle 
\\
&\qquad\qquad\qquad
+
\int_\T F_\e (s,x) 
\Phi \big(\langle \iota_{\eps}(\cdot -  x),  \pi^N_s\rangle \big) dx
   \Big\}   ds \Big|
>\delta
\Big]
=0.
\end{split}
\end{equation}
Notice that the discrete sum on $k$ is also replaced by the corresponding integral.

As the absolute value term in \eqref{eqn sum by integral} is continuous (with respect to the uniform topology) on $D([0,T],\Mb )$, the set of trajectories in \eqref{eqn sum by integral} is open, Taking $N\to \infty$, we obtain
\begin{equation}
\label{Q supports, with eps}
\begin{split}
&
\lim_{\e\to0}
\limsup_{\eps \to 0}
Q
\Big[
\Big| 
\left\langle G_0,  \pi_0 \right\rangle
+
\int_0^T \Big\{ \left\langle \partial_s G_s,  \pi_s \right\rangle 
\\
&\qquad\qquad\qquad
+
\int_\T F_\e (s,x) 
\Phi \big(\langle \iota_{\eps}(\cdot -  x),  \pi_s\rangle \big) dx
   \Big\}   ds \Big|
>\delta
\Big]
=0.
\end{split}
\end{equation}

\vskip .1cm
{\it Step 4.} We show in
Lemma \ref{lem: abs continuity} that $Q$ is supported on trajectories 
\[
\pi_s(dx) =
 \rho(s,x) dx + \sum_{j\in J_c} \m_j(s) \delta_{x_j}(dx).
 \]
Then, for $x\notin \D^\e$ and $\eps< \e$, 
 $\langle \iota_{\eps}(\cdot -  x),  \pi_s\rangle 
 = (2\eps)^{-1} \int_{x-\eps}^{x+\eps}  \rho(s,u)du$.
Note that $\Phi$ is Lipschitz and $\rho$ is integrable on $[0,T]\times \T$. Hence, for all $\e$ small, we have, $Q$-almost surely
\begin{equation*}
\lim_{\eps\to 0}
\int_0^T 
\int_\T
F_\e (s,x)
 \Phi(\langle \iota_{\eps}(\cdot - x),  \pi_s \rangle )
 dx  ds 
 =
 \int_0^T 
\int_\T
F_\e (s,x)
 \Phi(\rho(s,x))
 dx  ds .
\end{equation*}
Since almost sure convergence impiies convergence in probability, we obtain from  \eqref{Q supports, with eps} that, for all $\delta>0$
\begin{equation*}
\begin{split}
&
\lim_{\e\to0}
Q
\Big[
\Big|
\left\langle G_0, \pi_0 \right\rangle
+
\int_0^T \Big\{ \left\langle \partial_s G_s,  \pi_s \right\rangle
+
\int_\T
F_\e(s,x)
\Phi(\rho(s,x))
 dx  \Big\}  ds  \Big|
 >\delta \Big]
=0.
\end{split}
\end{equation*}
Taking $\e\to 0$ we may also replace $F_\e$ by $\partial_{xx} G$.
As $\delta$ is arbitrary, we have
\begin{equation*}
\begin{split}
Q\Big[
\left\langle G_0, \pi_0 \right\rangle
+
\int_0^T \Big\{ \left\langle \partial_s G_s,  \pi_s \right\rangle
+
\int_\T
\partial_{xx} G_s\left(x \right) 
\Phi(\rho(s,x))
 dx  \Big\}  ds =0
 \Big] = 1.
 \end{split}
\end{equation*}
By Condition \ref{condition on mu N}, the initial condition $\pi_0 = \pi$ holds.
Thus, we conclude the limit measure $Q$ is concentrated on trajectories $\pi_\cdot$ that satisfies the weak formulation  \eqref{weak sln, def}.

\vskip .1cm {\it Step 5.}
That $\rho(s,x) \in L^2([0,T]\times \T)$ follows from  Lemma \ref{lem: abs continuity}. 
The weak spatial differentiability of $\Phi(\rho(t,x))$ and the $L^2$ integrability of the weak derivative are addressed in Proposition \ref{prop: weak derivative of Phi} and Remark \ref{rmk on deriv L2}. 
When $g$ is of $n^\alpha$ type, 
by Lemma \ref{lem: boundary cond, super} and Lemma \ref{lem: bdry at critical}, we  obtain the boundary conditions $\m_{j}(t) = \left( \lambda_j \Phi(\rho(t,x_j)) \right)^{1/\alpha}$ for all $j\in J_c$ and $\Phi(\rho(t,x_j)) = \Phi(c_0)$ for all $j\in J_s$.
When $g$ is bounded, by Lemma \ref{lem: boundary, bounded}, it holds that 
$\Phi(\rho(t,x_j)) \leq 1/ \lambda_j$ and 
$\m_j(t) = \m_j(t)\id_{\Phi(\rho(t,x_j)) = 1/ \lambda_j}$ for all $j\in J_c$.
Therefore, $\pi_\cdot$ is a weak solution to \eqref{pde: k alpha} or \eqref{pde: bounded g} when $g$ is of $n^\alpha$ type or bounded respectively (cf.~Definitions \ref{def: weak sln} and \ref{def: weak sln, bounded g}).

In Section \ref{section: uniqueness}, we show that there is at most one weak solution
$\pi_\cdot$ to \eqref{pde: k alpha} or  \eqref{pde: bounded g}.
  We conclude then that the sequence of $Q^N$ converges weakly to the Dirac
measure on $\pi_\cdot$.  Finally, as $Q^N$ converges to $Q$ with
respect to the uniform topology, we have weak convergence  of $\pi^N_t$ for each $0\leq t\leq T$.
That is, for all $G$ in $C_c (\T\setminus \D_s)$,
$\langle G,\pi_t^N\rangle$ weakly converges to the constant
$\langle G,\pi_t\rangle$, and therefore convergence in probability as
stated in \eqref{HDL: alpha g} and \eqref{HDL: bounded g}.
To finish the proof, it remains to extend the test function from $G\in C_c (\T\setminus \D_s)$ to $G\in C  (\T )$. When $g$ is bounded, it holds that $C_c (\T\setminus \D_s) = C  (\T )$ as $\D_s=\emptyset$. 
When $g$ is of $n^\alpha$ type,
notice that $\rho(t,\cdot)$ is in $L^1(\T)$  for all $t\geq0$ (cf.~Lemma \ref{lem: abs continuity}). Then, the desired extension follows from standard approximations and \eqref{nbhd expected nmbr} of Lemma \ref{lem: total nmbr bound}.

\section{Tightness}
\label{sec: tightness}
In this section, we address the tightness of the probability measures associated with the trajectories of empirical measures $\pi_\cdot^N$. Recall that $\D_s$ is the macroscopic locations of the super-slow sites; in particular, $\D_s=\emptyset$ when $g$ is bounded. 
To unify treatmentss, in both rate $g$ settings, we let $\Mb$ be the space of locally finite nonnegative measures on $ \T \setminus \D_s$.
Let $C_c( \T \setminus \D_s)$ be the space of continuous functions with compact support on $ \T \setminus \D_s$. Let $\{f_k\}_{k\in \N}$ be a countable dense set in $C_c( \T \setminus \D_s)$ in the sense that for all $f\in C_c( \T \setminus \D_s)$ there exists a subsequence $\{f_{n_k}\}$ such that ${\rm supp\,} f_{n_k}\subset {\rm supp\,}f$ for all $k$ and $f_{n_k}$ converges uniformly to $f$. 
Equipped with the distance 
\[
d(\mu,\nu)
=
\sum_{k=1}^{\infty}
2^{-k}
\frac{\left|  \int_0^1 f_k (d\mu - d\nu)  \right|}{1+  \left| \int_0^1 f_k (d\mu - d\nu)  \right|}.
\]
the space $(\Mb, d(\cdot,\cdot))$ is a complete separable metric space.
The metric $d(\cdot,\cdot)$ realizes the vague topology, 
that is, $\lim_{n\to\infty}d(\mu_n,\mu)=0$ if and only if $\lim_{n\to \infty} \int f d\mu_n =  \int f d\mu$ for all $f\in C_c( \T \setminus \D_s)$.

Let $Q^N$ be the
probability measure on the trajectory space $D([0,T],\Mb )$ governing
$\pi_{\cdot}^N$ when the process starts from $\mu^N$.
We show that $\left\{Q^N\right\}$ is tight with respect to the
uniform topology, stronger than the Skorokhod topology on
$D([0,T], \Mb)$. 

\begin{Lem}
\label {lem: tightness}
$\left\{Q^N\right\}_{N\in \N}$ is relatively compact with respect to the uniform topology.
As a consequence, all limit points $Q$ are supported on trajectories $\{\pi_t\}_{t\in [0,T]}$ vaguely continuous on $\T\setminus \D_s$, that is, for all $G\in C_c(\T \setminus \D_s)$, the mapping $t\in [0,T] \mapsto \langle G, \pi_t\rangle$ is continuous.
\end{Lem}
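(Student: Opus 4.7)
My plan is to reduce tightness of the $\Mb$-valued processes to tightness of the scalar projections $X^N_k(t) := \langle f_k, \pi^N_t\rangle$ for each $f_k$ in the dense countable family of $C_c(\T \setminus \D_s)$. The metric $d$ is, by construction, a $2^{-k}$-weighted sum of bounded increasing functions of $|\langle f_k, \mu - \nu\rangle|$, so tightness of each scalar coordinate promotes to tightness of $\{\pi^N_\cdot\}$ via a standard diagonal argument; moreover, because individual particle jumps change $\langle f_k, \pi^N_\cdot\rangle$ by at most $2\|f_k\|_\infty / N$, any Skorokhod limit of the measure-valued process automatically has continuous paths, and on $C([0,T], \Mb)$ the Skorokhod and uniform topologies coincide. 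The task thus reduces to verifying Aldous' criterion for each $\{X^N_k\}_N$ in $D([0,T], \R)$.

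Fix $f = f_k$ and set $X^N(t) := \langle f, \pi^N_t\rangle$. Since $f$ has compact support disjoint from $\D_s$, for $N$ large $X^N(t) = \langle f, \wpi^N_t\rangle$, so the stochastic differential from Section \ref{section: martingale} applies. Containment (Aldous' condition (a)) follows from $|X^N(t)| \le \|f\|_\infty \cdot N^{-1} \sum_{k \notin \D_{s,N}} \eta_t(k)$, whose expectation is uniformly bounded in $(t,N)$ by \eqref{expected_number}.

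For Aldous' increment condition (b), let $\tau \le T$ be a stopping time and $\theta \in [0, \delta]$, and decompose
\[
X^N(\tau+\theta) - X^N(\tau) \;=\; \bigl(M^{N,f}_{\tau+\theta} - M^{N,f}_\tau\bigr) + \int_{\tau}^{\tau+\theta} A^N_r \, dr,\qquad A^N_r := \tfrac{1}{N}\sum_{k\in \T_N} \Delta_N f(k/N)\, g_{k,N}(\eta_r(k)).
\]
The martingale increment satisfies $\E_N\bigl[(M^{N,f}_{\tau+\theta} - M^{N,f}_\tau)^2\bigr] \le \E_N \langle M^{N,f}\rangle_T = O(N^{-1})$ by Lemma \ref{lem: martingale bounds} and optional sampling. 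For the integral piece, I invoke strong Markov at $\tau$ together with attractiveness from Section \ref{sec: coupling}: the post-$\tau$ distribution is stochastically dominated by the stationary $\Pam_{c'}^N$ in the $n^\alpha$ setting, or by $\overline\kappa_{c'}^N$ in the bounded setting. Stationarity combined with \eqref{expected g bound} and \eqref{expected_g_all} then yields $\E_N[|A^N_{\tau+s}|] \le C\|\Delta_N f\|_\infty$ uniformly in $s \in [0,\delta]$, and consequently $\E_N\bigl[\bigl|\int_\tau^{\tau+\theta} A^N_r\, dr\bigr|\bigr] \le C\theta$. Chebychev's inequality yields (b) and so Skorokhod tightness of each $\{X^N_k\}_N$.

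Passing back to the measure-valued processes via the weighted structure of $d$, Skorokhod tightness of $\{Q^N\}$ on $D([0,T], \Mb)$ follows, and the vanishing jump size $O(1/N)$ forces all subsequential limits to be supported on $C([0,T], \Mb)$; on this continuous-path set the Skorokhod and uniform topologies agree, hence $\{Q^N\}$ is relatively compact in the uniform topology and the limit trajectories are vaguely continuous. The main obstacle is the integral-term increment bound: at defect sites no useful pointwise bound on $A^N_r$ is available, and the total-mass integral $\int_0^T |A^N_r|\,dr$ does not by itself supply $O(\theta)$ control uniformly over stopping time windows of length $\theta$. The strong-Markov plus attractiveness device, which exploits that the process distribution remains dominated by a stationary reference with uniformly bounded expected total jump rate, is what unlocks this estimate, and relies crucially on the monotonicity of $g$ together with the initial stochastic domination in Condition \ref{condition on mu N}.
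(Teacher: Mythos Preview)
Your overall Aldous-criterion strategy is a legitimate alternative to the paper's direct verification of the uniform oscillation modulus, and the martingale increment is correctly handled via optional sampling and Lemma~\ref{lem: martingale bounds}. The gap is in the drift estimate: the assertion that ``the post-$\tau$ distribution is stochastically dominated by the stationary $\Pam_{c'}^N$'' is false. Stochastic domination $\mu^N_t\le\Pam_{c'}^N$ holds at deterministic times but does not extend to stopping times, because in the basic coupling $\eta_\cdot\le\eta'_\cdot$ the stopping time $\tau$ (adapted to $\eta$) is correlated with $\eta'$, and the law of $\eta'_\tau$ need not be $\Pam_{c'}^N$. Concretely, if $\mu^N=\Pam_{c'}^N$ (so $\eta=\eta'$) and $\tau=\inf\{t:\eta_t(0)\ge M\}\wedge T$, then $\E[\eta_\tau(0)]$ exceeds $c'$ once $M$ is large and the hitting event has positive probability. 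Consequently the deterministic-time bounds \eqref{expected g bound}--\eqref{expected_g_all} cannot be invoked to give $\E_N[|A^N_{\tau+s}|]\le C$.

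The repair is a truncation, which is precisely what the paper does (Step~5 of its proof) for $g$ of $n^\alpha$ type. Split each regular-site summand of $|A^N_r|$ according to $\{\eta_r(k)\le A\}$ versus $\{\eta_r(k)>A\}$: the first piece is pointwise bounded by $Cg^*A$, so its integral over $[\tau,\tau+\theta]$ is at most $Cg^*A\,\theta$ deterministically; for the tail piece, discard the stopping window via $\int_\tau^{\tau+\theta}\le\int_0^T$, after which Fubini reduces matters to deterministic times, and either attractiveness or the entropy inequality (the paper uses the latter) shows $\int_0^T\E_N\big[N^{-1}\sum_{k\notin\D_N}\eta_r(k)\id_{\eta_r(k)>A}\big]\,dr\to 0$ as $A\to\infty$ uniformly in $N$. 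The finitely many defect-site contributions are handled by the same $\int_0^T$ trick together with \eqref{expected g bound}. Sending $\theta\to 0$ and then $A\to\infty$ recovers Aldous' condition~(b), and with this correction your argument goes through.
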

\begin{proof}
To deduce that $\{Q^N\}$ is relatively compact with respect to uniform
topology, we show the following items (cf. Theorem 15.5 in
\cite{Billingsley}).
\begin{enumerate} 
\item For each $t\in[0,T]$, $\epp>0$, there exists a compact set $K_{t,\epp}\subset \Mb$ such that
\begin{equation}\label {eqn1 compactness}
\sup_{N} Q^{N} \left[\pi^N_\cdot: \pi^N_t\notin K_{t,\epp}\right ] \leq \epp.
\end{equation}
\item For every $\epp>0$, 
\begin{equation}\label {eqn2 compactness}
\limsup_{r \to 0} 
\limsup_{N\to \infty}
Q^N \Big [  \pi^N_\cdot : \sup_{|t-s|<r} 
d(\pi^N_t,\pi^N_s ) >\epp \Big]
=0.
\end{equation}
 \end{enumerate}

\vspace{0.1cm}
{\it Step 1.}
We first consider \eqref{eqn1 compactness}.
Notice that, for any $A>0$, the set $\left\{ \mu\in \Mb: \langle 1,\mu\rangle \leq A  \right\}$ is compact in $\Mb$.
Also, by \eqref{expected_number}, we have $\E_N \Big [N^{-1} \sum_{k\notin \D_{s,N}} \eta_t(k) \Big ]\leq C$ for some constant $C<\infty$ independent of $N$. As
$Q^N\big[\langle 1, \pi^N_t\rangle > A\big]
\leq
A^{-1} \E_N \big [ N^{-1} \sum_{k\notin \D_{s,N}} \eta_t(k) \big ]$, the first condition \eqref{eqn1 compactness} is checked by taking $A$ large. 

\vspace{0.1cm}
{\it Step 2.}
To verify the second condition \eqref {eqn2 compactness}, it is enough
to show a counterpart of the condition for the distributions of
$\langle G, \pi^N_{\cdot}\rangle$ where $G$ is any smooth test
function compactly supported on $ \T \setminus \D_s$ (cf.\,p.\,54, \cite{KL}).  In other words, we need to
show, for every $\epp>0$,
\begin{equation}
\label {eqn2 compactness with G}
\limsup_{r \to 0} 
\limsup_{N\to \infty}
Q^N \Big [  \pi^N_\cdot : \sup_{|t-s|<r} \Big| \langle G,  \pi^N_t \rangle  -\langle G, \pi^N_s \rangle\Big| >\epp \Big]
=0.
\end{equation}
Note that
$\left\langle G,  \pi^N_t \right\rangle
=
\left\langle G,  \pi^N_0 \right\rangle
+
\int_0^t N^2L_N \left\langle G,   \pi^N_s \right\rangle ds
+
M^{N,G}_t$.
Then, we only need to consider the oscillations of $\int_0^t N^2L_N \left\langle G,   \pi^N_s \right\rangle ds$
and $M^{N,G}_t$ respectively. 

\vspace{0.1cm}
{\it Step 3.}
For the oscillations of the martingale $M^{N,G}_t$, 
by $\big| M^{N,G}_t - M^{N,G}_s  \big| \leq  \big| M^{N,G}_t \big| + \big| M^{N,G}_s  \big|$, we have
$
\P_N
\big[
\sup_{|t-s|<r} 
\big| 
M^{N,G}_t - M^{N,G}_s  
\big|
>\epp
\big]
\leq
2\P_N
\big[
\sup_{0\leq t \leq T} 
\big| 
M^{N,G}_t  
\big|
>\epp /2
\big]
$.
Using Chebyshev and Doob's inequality, we further bound it by
\[
\dfrac{8}{\epp^2}
\E_N
\Big[
\Big(
\sup_{0\leq t \leq T} 
\big| 
M^{N,G}_t  
\big|
\Big)^2
\Big]
\leq
\dfrac{32}{\epp^2}
\E_N
\Big[
\big(
M^{N,G}_T
\big)^2
\Big]
= 
\dfrac{32}{\epp^2}\E_N
\langle
M^{N,G}
\rangle_T .
\]
By Lemma \ref{lem: martingale bounds}, $\E_N \langle M^{N,G}\rangle_T = O(N^{-1})$.
Then, we conclude
\[    
\lim_{r \to 0}
\lim_{N\to \infty}
\P_N
\Big[
\sup_{|t-s|<r} 
\left| 
M^{N,G}_t - M^{N,G}_s  
\right|
>\epp
\Big]
=0.
\]

\vspace{0.1cm}
{\it Step 4.}
To conclude \eqref{eqn2 compactness with G}, it suffices to show
\[
\limsup_{r \to 0} 
\limsup_{N\to \infty}
 Q^N \Big [ 
\sup_{|t-s|<r} 
 \int_s^t 
  \big|
 N^2L_N \left\langle G,  \pi^N_{\tau} \right\rangle 
 \big|
 d\tau
>\epp \Big]
=0.
\]
The absolute value term is bounded above by
$ \|\Delta G\|_\infty \dfrac1N \sum_{k\notin \D_{s,N}}  g_{k,N}(\eta_\tau(k))$ (cf.~\eqref{gen_comp} applied to $\pi^N_\cdot$).
Then, by Markov's inequality, it remains to show
\[
\limsup_{r \to 0} 
\limsup_{N\to \infty}
\E_N \Big [ 
\sup_{|t-s|<r} 
 \int_s^t \dfrac1N \sum_{k \in \T_N \setminus \D_{s,N}} 
  g_{k,N}(\eta_\tau(k)) d\tau  \Big]
=0.
\]
By Lemma \ref{lem: total nmbr bound}, 
$\E_N \big[ \int_0^T N^{-1} g_{k,N} (\eta_t(k)) dt \big]$
vanishes as $N\to\infty$ for defect sites $k\in \D_{c,N}$ or $\D_{b,N}$.
Therefore, we may restrict the summation term in the previous display over $k\in \T_N \setminus \D_N$.
Note that for each such $k$ we have $g_{k,N} (\cdot)= g(\cdot)$.
When $g$ is bounded, as
$N^{-1} \sum_{k \notin \D_N} 
  g(\eta_\tau(k)) \leq \|g\|_\infty$, the lemma is proved for this case.
  
The rest of this proof focuses on the case when $g$ is of $n^\alpha$ type. Since $g(\cdot)$ grows at most linearly,  we are left to show
\begin{equation} 
\label{oscillate, parti numb}
\lim_{r\to 0} 
\lim_{N\to \infty}
\E_N \Big [ 
\sup_{|t-s|<r} 
\Big |
 \int_s^t \dfrac1N \sum_{k\in \T_N\setminus \D_N}  
 \eta_\tau(k) d\tau \Big | \Big]
 =0.
 \end{equation}

\vspace{0.1cm}
{\it Step 5.}
To show \eqref{oscillate, parti numb}. we introduce a truncation $\id_{\eta(k)\leq A}$ with $A>0$.
Notice that
\[
\E_N \Big [ 
\sup_{|t-s|<r} 
\Big |
 \int_s^t \dfrac1N \sum_{k\notin \D_N}    
\eta_\tau(k) \id_{\eta_\tau(k)\leq A}
 d\tau \Big | \Big]
 \leq
 \E_N \Big [ 
\sup_{|t-s|<r} 
\Big |
 \int_s^t A
 d\tau \Big | \Big]
 \leq
 r A
\]
which vanishes when taking 
$N\to\infty$, $r\to 0$, and $A\to \infty$ in order.
It remains to show the error term with $\id_{\eta(k) >A}$ also vanishes in the limit.
Note that the error term is estimated by
\[
\begin{split}
\E_N \Big [ 
\sup_{|t-s|<r} 
\Big |
 \int_s^t \dfrac1N \sum_{k\notin \D_N}    
 \eta_\tau(k) \id_{\eta_\tau(k)> A}
 d\tau \Big | \Big]
 \leq\ &
 \E_N \Big [ 
  \int_0^T
 \dfrac1N \sum_{k\notin \D_N}    
 \eta_\tau(k) \id_{\eta_\tau(k)> A}
  d\tau
 \Big] \\
 =&
  \int_0^T
 \E_N \Big [ 
 \dfrac1N \sum_{k\notin \D_N}   
 \eta_\tau(k) \id_{\eta_\tau(k)> A}
 \Big]  d\tau.
 \end{split}
\]
Recall the initial entropy bound $H(\mu^N | \Pam_{c_0}^N) < CN$. By entropy inequality, for any $\tau\in[0,T]$ and $B>0$, the term $\E_N \Big [ 
 N^{-1} \sum    
 \eta_\tau(k) \id_{\eta_\tau(k)> A}
 \Big]$ is further bounded above by
\begin{equation} \label {tightness: t-s, >A}
\dfrac{C}{B} + 
\dfrac{1}{BN} \ln E_{\Pam_{c_0}^N}
\Big[
\exp\Big\{B \sum_{k\notin \D_N}    
 \eta(k) \id_{\eta(k)> A} \Big\}
 \Big].
\end{equation}
 Since, with respect to $\Pam_{c_0}^N$, the $\eta(k)$'s in \eqref{tightness: t-s, >A} are i.i.d.\,with common distribution $\Pb_{\varphi}$ for $\varphi = \Phi(c_0)$,
we have \eqref{tightness: t-s, >A} is equal to
\[
\dfrac{C}{B} + 
\dfrac{N-n_0}{BN} \ln 
E_{\Pb_{\varphi}} 
\left[
e^{BX \id_{X> A} }
 \right]
 \leq 
 \dfrac{C}{B} + 
B^{-1} \ln 
E_{\Pb_{\varphi}} 
\left[
e^{BX \id_{X> A} }
 \right].
\]
Here,  $n_0$ is the number of defect sites and the distribution of $X$ is $\Pb_{\varphi}$.  As $E_{\Pb_{\varphi}}[e^{B X }] <\infty$ for all $B>0$,
we have $B^{-1} \ln E_{\Pb_{\varphi}} 
\big [ e^{BX \id_{X> A} } \big] \to 0$ when taking 
$A\to \infty$ and then $B\to \infty$.
Hence, we obtain
\[
\lim_{A\to\infty} \lim_{r\to 0} \lim_{N\to \infty}
\E_N \Big [ 
\sup_{|t-s|<r} 
\Big |
 \int_s^t \dfrac1N \sum_{k\notin \D_N}  
 \eta_\tau(k) \id_{\eta_\tau(k)> A}
 d\tau \Big | \Big]
 =0
\]
which completes the proof.
\end{proof}

\section{Properties of limit measures}
\label{sec: properties of limit measures}

By Lemma \ref{lem: tightness}, the sequence $\left\{ Q^N\right\}$ is relatively compact  with respect to the uniform topology.
 Consider any convergent subsequence of $Q^N$ and relabel so that $Q^N \Rightarrow Q$.  
We now consider absolute continuity and  boundary behaviors at defect sites for trajectories under $Q$.

\subsection{Absolute continuity}
We now address absolute continuity.

\begin{Lem}
\label{lem: abs continuity}
 We have
$Q$ is supported on trajectories whose restriction on $\T\setminus  \D_c$ is absolutely continuous and the density is in $L^\infty$ uniformly in $t$: there exists $c\geq0$ such that
\begin{equation*} 
Q \Big[ \pi_\cdot :
\pi_t(dx) = \rho(t,x)dx 
+ \sum_{j\in J_c} \m_j(t) \delta_{x_j}(dx)
 \text{ with }
\|\rho(t,\cdot)\|_\infty<c
 \text{ for all } 0\leq t\leq T
\Big]
=
1.
\end{equation*}
\end{Lem}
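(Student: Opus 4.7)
The plan is to exploit attractiveness together with the static behavior of invariant measures. By the basic coupling (Section \ref{sec: coupling}), $\mu^N_t$ is stochastically dominated for coordinate-increasing functions on $\T_N \setminus \D_{c,N}$ by a reference product measure $\nu^N$ ($\nu^N = \Pam_{c'}^N$ in the $n^\alpha$ case, $\nu^N = \overline\kappa_{c'}^N$ in the bounded case). Since under $\nu^N$ the coordinates are independent with mean $c'$ at regular sites, the law of large numbers (via Proposition \ref{static limit of pi} or \ref{prop: static limit, bounded g}) concentrates the bulk empirical mass to $c'\,dx$. The idea is to transfer this one-sided LLN through the weak limit $Q^N \Rightarrow Q$ in the uniform topology.

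More precisely, fix a nonnegative $G \in C_c(\T \setminus (\D_s \cup \D_c))$. For $N$ large, $\text{supp}(G)$ is separated from both $\D_{s,N}$ and $\D_{c,N}$, so $\langle G,\pi_t^N\rangle = N^{-1}\sum_k G(k/N)\eta_t(k)$ depends only on coordinates off $\D_{s,N} \cup \D_{c,N}$ and is coordinate-increasing there. For $\epsilon, M > 0$, set $\varphi(x) = \min\bigl((x - c'\int G\,dx - \epsilon)_+, M\bigr)$. Then $\varphi$ is increasing and bounded continuous, and $\pi \mapsto \varphi(\langle G,\pi\rangle)$ is continuous in the vague topology on $\Mb$. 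Stochastic domination gives
\[
\E_N\bigl[\varphi(\langle G, \pi^N_t\rangle)\bigr] \;\leq\; E_{\nu^N}\bigl[\varphi(\langle G, \wpi^N\rangle)\bigr],
\]
and the right side tends to $\varphi(c'\int G\,dx)=0$ as $N\to\infty$ by the LLN under $\nu^N$ and bounded convergence. Since evaluation at a fixed time is continuous in the uniform topology on $D([0,T],\Mb)$, we obtain $\E_Q[\varphi(\langle G,\pi_t\rangle)] = 0$. Letting $M \uparrow \infty$ by monotone convergence and then $\epsilon \downarrow 0$ yields
\[
\langle G, \pi_t\rangle \;\leq\; c' \int_\T G\,dx \qquad Q\text{-a.s.}
\]

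Choose a countable family $\{G_k\} \subset C_c(\T \setminus (\D_s \cup \D_c))$ of nonnegative functions, dense in the sense described before Lemma \ref{lem: tightness}. The bound above holds for each $G_k$ and each rational $t \in [0,T]$ on a set of full $Q$-measure; intersecting these countably many events and using the continuity of $t \mapsto \langle G_k, \pi_t\rangle$ from Lemma \ref{lem: tightness} extends the bound to all $t \in [0,T]$. Density then gives it for all nonnegative $G \in C_c(\T \setminus (\D_s \cup \D_c))$. By the Riesz representation theorem, this means the restriction of $\pi_t$ to $\T \setminus (\D_s \cup \D_c)$ is absolutely continuous with density $\rho(t,\cdot) \leq c'$. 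Since $\pi_t$ is a locally finite nonnegative measure on $\T \setminus \D_s$ and the complementary set $\D_c$ is a finite collection of points $\{x_j\}_{j \in J_c}$, the remaining mass must take the form $\sum_{j \in J_c} \m_j(t)\,\delta_{x_j}$ for some $\m_j(t)\geq 0$, giving the claimed decomposition with $c := c' + 1$ (to make the inequality strict as stated).

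The main obstacle is bridging the microscopic stochastic domination and the macroscopic limit: one must certify both that $\varphi(\langle G, \pi_t\rangle)$ is a genuine continuous functional on the trajectory space under the uniform topology and that the LLN holds for the dominating measure $\nu^N$ on test functions with support disjoint from $\D_{s,N}\cup\D_{c,N}$ (so that the contributions from defect coordinates, where $\nu^N$ may place large or infinite marginals, are harmlessly filtered out by $G$). Once this is in place, the uniformization over $t$ and $G$ in Step~3 is a routine density argument underpinned by the uniform-topology continuity provided by tightness.
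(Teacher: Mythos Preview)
Your proof is correct and follows essentially the same approach as the paper's: both arguments use the stochastic domination $\mu^N_t \leq \nu^N$ (with $\nu^N = \Pam_{c'}^N$ or $\kappa_{c'}^N$) on coordinates away from $\D_{c,N}$, a law of large numbers for $\langle G,\pi^N\rangle$ under $\nu^N$, passage to the limit via the uniform-topology convergence $Q^N\Rightarrow Q$, and a density argument over countably many nonnegative $G\in C_c(\T\setminus(\D_s\cup\D_c))$ combined with the continuity of $t\mapsto \langle G,\pi_t\rangle$. The only cosmetic difference is that the paper passes to the limit through the probability of the closed set $\{\langle G,\pi^N_t\rangle \leq c'\!\int G + \epsilon\}$, whereas you use the expectation of the bounded continuous functional $\varphi(\langle G,\cdot\rangle)$; both are standard and equivalent here.
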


\begin{proof}
Let $D:= \T \setminus \{\D_s\cup \D_c\}$
and $C_c^+(D)$ be the space of nonnegative continuous functions with compact support on $D$,
equipped with topology of uniform convergence on compact sets. 
Take $\left\{G_n\right\}_{n\in \N}$ be a dense sequence of $C_c^+(D)$.
The lemma will follow if we have, for some $c\geq 0$, 
\[
Q \Big[
\pi_\cdot:
\langle G_n, \pi_t \rangle
\leq 
\langle G_n, c \rangle
 \text{ for all } 0\leq t\leq T \text{ and } n\in \N
\Big] = 1.
\]
Here $\langle G, c\rangle$ is shorthand for $c\int_\T G(x)dx$.

To this end, recall $\kappa_{c'}^N$ defined in Condition \ref{condition on mu N}.  Let $\nu^N$ denote $\Pam_{c'}^N$ when $g$ is of $n^\alpha$ type and $\kappa_{c'}^N$ when $g$ is bounded. By the product structure of $\nu^N$ and Chebyshev inequality, for each $\delta>0$,  we have $\nu^N[ |\langle G_n,\pi^N\rangle - \langle G_n,c' \rangle| > \delta] \to 0$ as $N\to \infty$.

Fix $\e>0$ and $t\in [0,T]$. By attractiveness (cf.~Section \ref{sec: coupling}), 
$Q^N \big[
\langle G_n, \pi^N_t \rangle
\leq 
\langle G_n, c' \rangle
 + \e \big]$ is bounded from below by
$\nu^N \big[
\langle G_n, \pi^N \rangle
\leq 
\langle G_n, c' \rangle
 + \e \big].
 $
Then, we have, for all $t\geq0$ and $\e>0$
\[
\lim_{N\to \infty} Q^N \left[
\langle G_n, \pi_t^N \rangle
\leq 
\langle G_n, c' \rangle
 + \e
\right]
=
1.
\]
As compactness of $\{Q^N\}$ was shown in the uniform topology in Lemma \ref{lem: tightness}, the distribution of $\langle G_n,\pi^N_t\rangle$ under $Q^N$ converges weakly to $\langle G_n,\pi_t\rangle$ under $Q$. 
Hence, for any $\e>0$ and $t\in [0,T]$, we have 
\[
Q \Big[
\langle G_n, \pi_t \rangle
\leq 
\langle G_n, c' \rangle
 + \e
\Big]
\geq
\limsup_{N\to \infty}
Q^N \Big[
\langle G_n, \pi_t^N \rangle
\leq 
\langle G_n, c' \rangle
 + \e
\Big]=1.
\]
Since $Q$ is supported on vaguely continuous trajectories by Lemma \ref{lem: tightness}, we obtain for all $\e>0$ that
$
Q \Big[
\langle G_n, \pi_{t} \rangle
\leq 
\langle G_n, c' \rangle + \e
 \text{ for all } 0\leq t\leq T,\, n\in \N 
\Big]
=
1.$
Then, we conclude the lemma by taking $\e\to 0$ and choosing $c=c'$.
\end{proof}

\subsection{Boundary behavior}
\label{sec: boundary behavior}
We proceed by showing the behavior of the limit density $\rho(t,x)$ around the defects $x_j$ in $\D_s$ or $\D_c$.
Notice that, part of the results presented here are in terms of $\rho(t,x)$ as $x\to x_j+$. 
The corresponding results for $x\to x_j-$ follow from similar arguments.  In the following, we will make use of a `local' replacement lemma Lemma \ref{local replace, slow site} proved in Section \ref{sec: rplmt lem at bdry}.

For each $\eps\in (0,1)$, 
let $\hat \iota_\eps: (0,\eps)\mapsto [0,1]$
be a compactly supported continuous function such that $\hat \iota_\eps(x)=1$ for $x\in $
$(\eps^2, \eps-\eps^2)$.
Let $\| \hat\iota_\eps\|$ denote the $L^1$ norm $ \int_0^\eps \hat \iota_\eps(x) dx$ and $ \|\hat\iota_\eps\|^{-1}:= ( \|\hat\iota_\eps\|)^{-1}$. 
Notice that $\big| \|\hat\iota_\eps\|^{-1} - \eps^{-1} \big|$ is bounded from above for all $\eps$ small since 
$\big| \|\hat\iota_\eps\|^{-1} -\eps^{-1} \big| $
is less than $ \big| (\eps - 2\eps^2)^{-1} -\eps^{-1}\big  | $ which approaches $2$ as $\eps\to 0$.

We first describe behavior near `super-slow' sites in the $n^\alpha$ setting.
\begin{Lem} \label{lem: boundary cond, super}
Let $g$ be of $n^\alpha$ type.
Then, for any $j\in J_s$, $\delta>0$, and $G\in C[0,T]$,
we have
\begin{equation} \label {eqn: D_s bdry, with theta}
\lim_{\eps\to 0} Q
\Big[
\Big| \int_0^T G(s) 
\big( \Phi\big( \rho^{+,\theta}_{x_j}(s)\big)
-
\Phi(c_0)  \big)
  ds \Big| > \delta
\Big]
=0
\end{equation}
where $\rho^{+,\theta}_{x_j}(s) 
:=\|\hat \iota_\eps\| ^{-1}
\langle \hat\iota_\eps(\cdot-x_j) , \rho \rangle
= \|\hat\iota_\eps\|^{-1} \int_\T \hat\iota_\eps (x-x_j) \rho(s,x) dx$.
Consequently, 
\begin{equation} \label {eqn: D_s bdry}
Q\big[ 
\Phi(\rho(t,x_j) ) = \Phi(c_0) \text{ for a.e.~$t\in[0,T]$ and all $j \in J_s$}
\big] = 1.
\end{equation}  
\end{Lem}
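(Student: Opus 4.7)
The plan is to connect the macroscopic average $\Phi(\rho^{+,\eps}_{x_j})$ to the microscopic departure rate at the defect via a local replacement, then show this rate equals $\Phi(c_0)$ asymptotically, using that the $O(N^{\beta_j/\alpha})$ population at a super-slow site is essentially frozen on the diffusive time scale. First, I would invoke Lemma \ref{local replace, slow site} applied to the right $\eps$-window of $x_j$ to replace, in the sense of time-integrated convergence in $\P_N$-probability against $G$, the quantity $\Phi(\rho^{+,\eps}_{x_j}(s))$ by the microscopic rate $g_{k_{j,N},N}(\eta_s(k_{j,N})) = \lambda_j^{-1}N^{-\beta_j}g(\eta_s(k_{j,N}))$. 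Since $g(n)/n^\alpha \to 1$ and $\eta_s(k_{j,N})$ will be shown to be of exact order $N^{\beta_j/\alpha}$, this rate equals $\lambda_j^{-1}\bigl(N^{-\beta_j/\alpha}\eta_s(k_{j,N})\bigr)^\alpha$ up to a $\P_N$-vanishing error.

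The core step is then to prove $N^{-\beta_j/\alpha}\eta_s(k_{j,N}) \to (\lambda_j\Phi(c_0))^{1/\alpha}$ in $\P_N$-probability for each $s \in [0,T]$. At $s=0$, the marginal of $\Pam^N_{c_0}$ at $k_{j,N}$ is $\Pb_{\lambda_j N^{\beta_j}\Phi(c_0)}$ whose mean is $\sim (\lambda_j\Phi(c_0))^{1/\alpha} N^{\beta_j/\alpha}$ with fluctuations of smaller order (Lemma \ref{lem: order of R}); the entropy inequality combined with $H(\mu^N | \Pam^N_{c_0}) = O(N) = o(N^{\beta_j/\alpha})$ (since $\beta_j>\alpha$) transfers this concentration to $\mu^N$. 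For $s > 0$, mass conservation and the bulk bound $\sum_{k \notin \mathcal{D}_{s,N}} \eta_s(k) = O(N)$ from Lemma \ref{lem: total nmbr bound} give $\sum_{i \in J_s}(\eta_s(k_{i,N}) - \eta_0(k_{i,N})) = O(N)$, which already yields the single-site bound $|\eta_s(k_{j,N}) - \eta_0(k_{j,N})| = O(N)$ when $|J_s| = 1$. In the multi-site setting, isolating the individual variation requires a Dynkin-martingale decomposition of $\eta_\cdot(k_{j,N})$ under $N^2 L_N$ whose bracket is $O(N^2)$ on $[0,T]$ (both incoming and outgoing rates in the speeded scale being $O(N^2)$), giving fluctuations $O(N)$ by Doob's inequality. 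In either case $N^{-\beta_j/\alpha}|\eta_s(k_{j,N}) - \eta_0(k_{j,N})| \to 0$.

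Combining the above with dominated convergence gives
$\int_0^T G(s)\Phi(\rho^{+,\eps}_{x_j}(s))\,ds \to \Phi(c_0)\int_0^T G(s)\,ds$
in $Q^N$-probability, and hence in $Q$-probability since the functional is continuous under vague convergence for each fixed $\eps$, establishing \eqref{eqn: D_s bdry, with theta}. To derive \eqref{eqn: D_s bdry}, I would use Lemma \ref{lem: abs continuity} to write $\pi_s(dx) = \rho(s,x)\,dx + \sum_{j\in J_c}\m_j(s)\delta_{x_j}(dx)$ with $\rho \in L^\infty$, together with Remark \ref{rmk on w-deriv} which ensures that $x \mapsto \Phi(\rho(s,x))$ is continuous for a.e.\,$s$; then $\Phi(\rho^{+,\eps}_{x_j}(s)) \to \Phi(\rho(s, x_j))$ as $\eps \to 0$, and the arbitrariness of $G \in C[0,T]$ forces $\Phi(\rho(s, x_j)) = \Phi(c_0)$ a.e. The main obstacle is the per-site time-stability bound when $|J_s| > 1$: mass conservation alone only controls the aggregate variation across super-slow sites and cannot rule out $O(N^{\beta_j/\alpha})$ individual exchanges; the martingale argument needs a careful bracket computation at a superlinearly occupied site and matching of the non-equilibrium incoming and outgoing drifts.
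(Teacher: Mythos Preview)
Your overall architecture matches the paper's: local replacement (Lemma~\ref{local replace, slow site}) to pass from $\Phi(\rho^{+,\eps}_{x_j})$ to $g_{k_{j,N},N}(\eta_s(k_{j,N}))$, the approximation $g(n)\sim n^\alpha$, concentration at $s=0$ via entropy and Lemma~\ref{lem: order of R}, and then time-stability of $N^{-\beta_j/\alpha}\eta_s(k_{j,N})$. Step~5 (passage to \eqref{eqn: D_s bdry}) is also the same.

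The genuine difference is in the time-stability step for $|J_s|>1$, and here your proposed route has a gap. The Dynkin martingale for $\eta_\cdot(k_{j,N})$ does have bracket $O(N^2)$, but its compensator is
\[
\int_0^s N^2\Big[g(\eta_t(k_{j,N}-1))+g(\eta_t(k_{j,N}+1))-2(\lambda_jN^{\beta_j})^{-1}g(\eta_t(k_{j,N}))\Big]dt,
\]
a difference of two $O(N^2)$ terms. To make this drift $O(N)$ you would need the cancellation $g_{k,N}(\eta(k))-g_{k+1,N}(\eta(k+1))=O(N^{-1})$ in a time-integrated sense, whereas Lemma~\ref{replacement: slow site} only gives $o(1)$. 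So ``matching the drifts'' is not available at the required precision, and the obstacle you flag is real and unresolved in your plan.

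The paper avoids this entirely by testing against a \emph{smooth} function $F$ supported in a neighborhood of $x_j$ containing no other defect. Writing $r_{N,s}=N^{-\beta_j/\alpha}(\eta_s(k_{j,N})-\eta_0(k_{j,N}))$, one bounds
\[
|r_{N,s}|\le N^{1-\beta_j/\alpha}\big|\langle F,\wpi_s^N\rangle-\langle F,\wpi_0^N\rangle\big| + N^{-\beta_j/\alpha}\!\!\!\sum_{\substack{|k/N-x_j|\le\tau\\ k\neq k_{j,N}}}\!\!\!(\eta_s(k)+\eta_0(k)).
\]
The second piece is $O(N^{1-\beta_j/\alpha})$ by \eqref{expected_number}. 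For the first, $\langle F,\wpi_s^N\rangle-\langle F,\wpi_0^N\rangle$ decomposes into the martingale $M^{N,F}_s$ with $\E_N\langle M^{N,F}\rangle_T=O(N^{-1})$ (Lemma~\ref{lem: martingale bounds}) plus the drift $\int_0^s N^{-1}\sum_k \Delta_N F(k/N)\,g_{k,N}(\eta_t(k))\,dt$, which is $O(1)$ by \eqref{expected_g_all}. Thus $|r_{N,s}|=O(N^{1-\beta_j/\alpha})\to 0$. The point is that smoothness of $F$ converts the raw in/out rate difference into a discrete Laplacian acting on bounded quantities, so no drift cancellation at scale $N^{-1}$ is ever needed. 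This is the missing idea in your multi-site argument.
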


\begin{proof}
 The proof of is split into steps. We show \eqref{eqn: D_s bdry, with theta} in the first four steps and the last step derives \eqref{eqn: D_s bdry} from \eqref{eqn: D_s bdry, with theta}. 

\vspace{0.1cm}
{\it Step 1.} 
Fix $j\in J_s$ and also a test function $G\in C[0,T]$ for later use.  We first show that
\begin{equation} \label {eqn: g eta simple, E}
\lim_{N\to \infty}
\sup_{s\geq0}
\E_N \big[
 \big| 
 N^{-\beta_j}g(\eta_s(k_{j,N})) 
-
 N^{-\beta_j} (\eta_s(k_{j,N}) )^\alpha 
  \big| \big]
  =0.
\end{equation}
Note that $g(n)\sim n^\alpha$.
For each $\epp>0$, let $A=A(\epp)$ be such that
$| n^\alpha/ g(n) -1 | < \epp$ for all $n> A$. 
Then, the expectation term in \eqref{eqn: g eta simple, E} is bounded above by
$E_1 + E_2$ where $E_1$ and $E_2$ are the same expectation as in \eqref{eqn: g eta simple, E} with the integrand multiplied by indicators 
$\id_{\eta_s(k_{j,N}) \leq A}$ and
$\id_{\eta_s(k_{j,N}) > A}$ respectively.
For each $A$, the term $E_1$ vanishes as $N\to \infty$.
By the definition of $A$ and Lemma \ref{lem: total nmbr bound}, the term $E_2$ is further bounded above by 
 $ \epp \sup_{s\geq0}  \E_N \big[ N^{-\beta_j} g(\eta_s(k_{j,N}) )  \big] = O(\epp)$.
Letting $\epp \to 0$, we conclude \eqref{eqn: g eta simple, E}.

\vspace{0.1cm}
{\it Step 2.} 
We now argue that $N^{-\beta_j} (\eta_s(k_{j,N}) )^\alpha$ may be replaced by $N^{-\beta_j} (\eta_0(k_{j,N}) )^\alpha$. 
Let $r_{N,s} = N^{-\beta_j/\alpha} ( \eta_s(k_{j,N}) -
\eta_0(k_{j,N}))$.
Let $\tau>0$ be a constant such that there are no other defects within a $\tau$-neighborhood of $x_j$.
Take a test function $F: \T\mapsto [0,1]$ such that $F$ has compact support in $(x_j - \tau, x_j+\tau)$ and $F(x)=1$ for $|x-x_j|\leq \tau/2$.
Recall $\wpi_t^N$ from \eqref{eqn: pi_t hat} and notice that
\[
| r_{N,s} |
\leq
N^{1-\beta_j/\alpha}  
\big| \langle F, \wpi_s^N\rangle 
-  \langle F, \wpi_0^N\rangle 
\big|
+
N^{-\beta_j/\alpha} 
{\sum_k}^\circ
 (\eta_s(k) + \eta_0(k))
 := I_1 + I_2 .
\]
where the $\sum_k^\circ$ term is summation over $k\in \T_N$ such that $|k/N - x_j| \leq \tau$ and $k\neq k_{j,N}$.

As $\beta_j >\alpha$, \eqref{expected_number} of Lemma  \ref{lem: total nmbr bound} asserts that
 $\sup_{s\geq 0}\E_N [I_2]$ vanishes as $N\to \infty$.
To show $\E_N[I_1]$ vanishes, notice that
$\left\langle F,  \wpi^N_s \right\rangle
-
\left\langle F,  \wpi^N_0 \right\rangle
=
\int_0^s N^2L_N \left\langle F,   \wpi^N_t \right\rangle dt
+
M^{N,F}_s$.
Using the generator computation 
\eqref{gen_comp} and the jump rate bound \eqref{expected_g_all}, we have 
\[
\sup_{N\in \N}\sup_{t\geq0}
\E_N \big[
N^2L_N \left\langle F,   \wpi^N_t \right\rangle \big]
\leq 
\sup_{N\in \N}\sup_{t\geq0}
\|\Delta_N F(\cdot)\|_\infty
\E_N \Big [ \dfrac1N \sum_{k\in \T_N} g_{k,N}(\eta_t(k)) \Big ] <\infty.
\]
Also, by Lemma \ref{lem: martingale bounds}, the martingale term vanishes as 
its variance $ \sup_{0\leq s\leq T}\E_N \langle M^{N,G}\rangle_s = o(1)$.
Thus, as $\beta_j>\alpha$, we have $\E_N[I_1]$ and, therefore, $\E_N [|r_{N,s}|]$
 vanishes as $N\to \infty$ uniformly for all $s\in [0,T]$.
 
As $\alpha \in (0,1]$, by the elementary inequality $(|x|+|y|)^\alpha \leq |x|^\alpha + |y|^\alpha$, we have 
\[
N^{-\beta_j} |(\eta_s(k_{j,N}))^\alpha - (\eta_0(k_{j,N}))^\alpha |
\leq |r_{N,s}|^\alpha.
\]
Therefore, we conclude
\begin{equation} \label {eqn: eta change, super}
\lim_{N\to \infty}
\sup_{0\leq s\leq T}
\E_N \Big[
 N^{-\beta_j}
\big|
(\eta_s(k_{j,N}) )^\alpha 
-
(\eta_0(k_{j,N}) )^\alpha
\big|
 \Big]
  =0.
\end{equation}

In particular,
it follows from \eqref{eqn: g eta simple, E}
and \eqref{eqn: eta change, super} that,
for any $\delta>0$,
\begin{equation} \label{bdry super, before rplmt}
\lim_{N\to \infty}
\P_N \Big[ \Big| 
\int_0^T G(s) 
\big(
 N^{-\beta_j}g(\eta_s(k_{j,N})) 
-
 N^{-\beta_j} (\eta_0(k_{j,N}) )^\alpha \big)
  ds\Big| >\delta \Big]
  =0.
\end{equation}

\vspace{0.1cm}
{\it Step 3.} 
In this step, we show that $ N^{-\beta_j} (\eta_0(k_{j,N}) )^\alpha$ may be replaced by 
$\varphi = \lambda_j \Phi(c_0)$.
Precisely, as $\eta_0$ has distribution $\mu^N$, we show that
\begin{equation} 
\label{eqn: initial slow site esti}
\lim_{N\to \infty}
\mu^N \big [  \big| N^{-\beta_j/\alpha}  \xi(k_{j,N}) -   \varphi^{1/\alpha} \big| > \delta \big] 
= 0.
\end{equation}
Indeed, let 
$ \Ub_N
:=
\big\{
\big| N^{-\beta_j/\alpha}  \xi(k_{j,N}) -  
 \varphi^{1/\alpha} \big| \geq \delta
\big\}$.
Fix $r_0\in (1,\beta_j/\alpha)$.
By the entropy inequality,
\[
\mu^N \left[\Ub_N\right]
=
E_{\mu^N} \left[ \id_{\Ub_N} \right]
\leq
N^{-r_0} H(\mu^N | \Pam_{c_0}^N) 
+
N^{-r_0}\ln E_{\Pam_{c_0}^N}
\big[
e^{N^{r_0} \id_{\Ub_N}}
\big].
\]
Since $H(\mu^N | \Pam_{c_0}^N)=O(N)$  (cf.~Condition \ref{condition on mu N}) and $E_{\Pam_{c_0}^N}
\big[
e^{N^{r_0} \id_{\Ub_N}}
\big] \leq 1 + 
e^{N^{r_0}} \Pam_{c_0}^N \left[\Ub_N\right]$, 
we have
\[
\lim_{N\to \infty} \mu^N \left[\Ub_N \right]
\leq
\max\big\{0,
 \lim_{N\to \infty}
N^{-r_0} \ln \big( e^{N^{r_0}} \Pam_{c_0}^N \left[\Ub_N\right] \big)  \big\}.
\]
By the following Lemma \ref{lem: LD super}, we have $\lim_{N\to \infty} N^{-r_0} \ln \Pam_{c_0}^N \left[\Ub_N\right] =-\infty$,
and therefore, \eqref{eqn: initial slow site esti} holds.

\vspace{0.1cm}
{\it Step 4.}
Putting together \eqref{bdry super, before rplmt} and \eqref{eqn: initial slow site esti}, we have 
\[
\lim_{N\to \infty}
\P_N \Big[ \Big| 
\int_0^T G(s) 
\big(
 N^{-\beta_j}g(\eta_s(k_{j,N})) 
-
 \lambda_j \Phi(c_9) \big)
  ds\Big| >\delta \Big]
  =0.
\]
For each $\eps>0$, let $\eta_s^{\eps N,+}(k)  = (\eps N)^{-1} \sum_{ 0< j - k \leq \eps N} \eta_s(k)$.
By Lemma \ref{local replace, slow site}, we may further replace $N^{-\beta_j}g(\eta_s(k_{j,N}))$ by $\lambda_j \Phi(\eta_s^{\eps N,+}(k_{j,N}))$ and obtain
\begin{equation} \label {bdry, indicator test fcn}
\lim_{\eps \to 0}
\limsup_{N\to \infty}
\P_N \Big[ \Big| 
\int_0^T G(s) 
\big(
 \Phi(\eta_s^{\eps N,+}(k_{j,N})) 
-
 \Phi(c_0) \big)
  ds\Big| >\delta \Big]
  =0.
\end{equation}
Notice that
$\big |\eta_s^{\eps N,+}(k_{j,N}) 
 - 
 \|\hat \iota_\eps\| ^{-1}
\langle \hat\iota_\eps(\cdot-x_j) , \pi_s^N \rangle \big|$
is bounded above by
\[
\big| \|\hat\iota_\eps\|^{-1} -\eps^{-1} \big|
 N^{-1} \sum_{0<k/N -x_j\leq \eps} 
 \eta_s(k)
 +
  (\eps N)^{-1} \sum_{(k/N -x_j) \in (0,\eps^2) \cup (\eps - \eps^2, \eps]} 
 \eta_s(k).
\]
Using the particle numbers bound \eqref{nbhd expected nmbr} and that $\big| \|\hat\iota_\eps\|^{-1} -\eps^{-1} \big|$ is bounded for all $\eps$ small, the $\E_N$ expectation of the previous display vanishes as $N\to\infty$.
Since $\Phi(\cdot)$ is Lipschitz, we may replace $\eta_s^{\eps N,+}(k_{j,N})$ in \eqref{bdry, indicator test fcn} by $\|\hat \iota_\eps\| ^{-1}
\langle \hat\iota_\eps(\cdot-x_j) , \pi_s^N \rangle$ to obtain
\[
\lim_{\eps\to0}
\limsup_{N\to \infty}
Q^N \Big[
\Big| \int_0^T G(s) 
\big( \Phi\big( 
\|\hat \iota_\eps\| ^{-1}
\langle \hat\iota_\eps(\cdot-x_j) , \pi_s^N \rangle
\big)
-
\Phi(c_0)  \big)
  ds \Big| > \delta
\Big] = 0.
\]
Since the absolute value term in the previous display is continuous (in the uniform topology) when viewed as a function on the trajectory space $D([0,T],\Mb )$, we obtain 
\[
\lim_{\eps\to0}
Q \Big[
\Big| \int_0^T G(s) 
\big( \Phi\big( 
\|\hat \iota_\eps\| ^{-1}
\langle \hat\iota_\eps(\cdot-x_j) , \pi_s \rangle
\big)
-
\Phi(c_0)  \big)
  ds \Big| > \delta
\Big] = 0.
\]
By Lemma \ref{lem: abs continuity}, $\|\hat \iota_\eps\| ^{-1}
\langle \hat\iota_\eps(\cdot-x_j) , \pi_s \rangle$ is recognized as $\rho^{+,\theta}_{x_j}(s)$; therefore \eqref{eqn: D_s bdry, with theta} follows.

\vspace{0.1cm}
{\it Step 5.} 
We now conclude the proof of the lemma by showing \eqref{eqn: D_s bdry} from \eqref {eqn: D_s bdry, with theta}.
By Lemma \ref{lem: abs continuity} and Proposition \ref {prop: weak derivative of Phi} (see also Remarks \ref{rmk on w-deriv} and \ref{rmk on deriv L2}), it holds almost surely under $Q$ that $\|\rho(\cdot,\cdot)\|_\infty<\infty$ and $\rho(t,\cdot)\in C(\T)$ for almost all $t\in [0,T]$. Then, we have, $Q$-almost surely,  $\int_0^T G(s) 
 \Phi\big( \rho^{+,\theta}_{x_j}(s)\big) ds$ converges to 
 $\int_0^T G(s) \Phi (\rho(s,x_j)) ds$.
As we have shown in \eqref {eqn: D_s bdry, with theta} that $\int_0^T G(s) 
 \Phi\big( \rho^{+,\theta}_{x_j}(s)\big) ds$ also converges to 
 $\int_0^T G(s) \Phi (c_0) ds$ in probability, we obtain,
 for all $G\in C[0,T]$ and $j\in J_s$
\[
Q
\Big[
 \int_0^T G(s) 
\big( \Phi\big( \rho(s,x_j)\big)
-
\Phi(c_0)  \big)
  ds  =0
\Big]
=1.
\]
As the continuous function space $C[0,T]$ is separable with respect to the unform norm and $J_s$ is a finite set, we further obtain
\[
Q
\Big[
 \int_0^T G(s) 
\big( \Phi\big( \rho(s,x_j)\big)
-
\Phi(c_0)  \big)
  ds  =0 \text{ for all $G\in C[0,T]$ and $j\in J_s$}
\Big]
=1
\]
which implies \eqref{eqn: D_s bdry}, finishing the proof. 
\end{proof}

We now show the estimate used in Step 3 of the above argument.

\begin{Lem} \label{lem: LD super}
Let $g$ be of $n^\alpha$ type. Let $X_N$ be distributed according to $\Pb_{N^\beta \varphi}$
for some  $\beta>\alpha$ and $\varphi\geq0$. 
Then, for any $r_0 < \beta/\alpha$
and $\delta>0$
\[
\lim_{N\to \infty}
N^{-r_0}
 \ln P\big[\big| N^{-\beta/\alpha} X_N -  \varphi^{1/\alpha} \big| \geq \delta \big]
= -\infty.
\]
\end{Lem}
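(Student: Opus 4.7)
\medskip

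\noindent\textbf{Proof plan for Lemma \ref{lem: LD super}.}

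The plan is to carry out an exponential Chebyshev (Cram\'er) estimate, using the asymptotic $\ln Z(\varphi)\sim \alpha \varphi^{1/\alpha}$ from Lemma \ref{lem: order of R} to compute the rate function, and to show it decays like $N^{\beta/\alpha}$, which beats any $N^{r_0}$ with $r_0<\beta/\alpha$. The case $\varphi=0$ is trivial since then $\Pb_{N^\beta\varphi}=\delta_0$ and the probability is $0$, so we focus on $\varphi>0$. Split
\[
P\bigl[\,\bigl|N^{-\beta/\alpha}X_N-\varphi^{1/\alpha}\bigr|\geq \delta\,\bigr]
\;\leq\;
P\bigl[\,X_N\geq N^{\beta/\alpha}(\varphi^{1/\alpha}+\delta)\,\bigr]
+
P\bigl[\,X_N\leq N^{\beta/\alpha}(\varphi^{1/\alpha}-\delta)\,\bigr].
\]

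For the upper tail, the moment generating function is
\[
E\bigl[e^{t X_N}\bigr]
=\sum_{n\geq 0}e^{tn}\frac{(N^\beta\varphi)^n}{Z(N^\beta\varphi)\,g(n)!}
=\frac{Z(N^\beta\varphi\,e^t)}{Z(N^\beta\varphi)},
\qquad t\in\R,
\]
since $r_g=\infty$ in the $n^\alpha$ setting. By exponential Chebyshev for $t>0$,
\[
\ln P\bigl[X_N\geq N^{\beta/\alpha}(\varphi^{1/\alpha}+\delta)\bigr]
\leq -t N^{\beta/\alpha}(\varphi^{1/\alpha}+\delta)+\ln Z(N^\beta\varphi\,e^t)-\ln Z(N^\beta\varphi).
\]
By Lemma \ref{lem: order of R}, $\ln Z(\phi)=\alpha \phi^{1/\alpha}(1+o(1))$ as $\phi\to\infty$, so the right side equals
\[
N^{\beta/\alpha}\Bigl[\alpha\varphi^{1/\alpha}(e^{t/\alpha}-1)-t(\varphi^{1/\alpha}+\delta)\Bigr](1+o(1)).
\]
Optimizing over $t>0$ at $t^*=\alpha\ln(1+\delta\varphi^{-1/\alpha})$ yields the coefficient
\[
-I^+(\delta,\varphi)\;:=\;-\alpha\Bigl[(\varphi^{1/\alpha}+\delta)\ln\bigl(1+\delta\varphi^{-1/\alpha}\bigr)-\delta\Bigr]\;<\;0,
\]
which is strictly negative by the strict convexity inequality $(a+b)\ln(1+b/a)>b$ for $a,b>0$. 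Hence
$\ln P[\,X_N\geq N^{\beta/\alpha}(\varphi^{1/\alpha}+\delta)\,]\leq -I^+(\delta,\varphi)N^{\beta/\alpha}(1+o(1))$.

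For the lower tail, if $\delta\geq \varphi^{1/\alpha}$ then the event forces $X_N=0$ and $P[X_N=0]=1/Z(N^\beta\varphi)$, whose log is $-\alpha N^{\beta/\alpha}\varphi^{1/\alpha}(1+o(1))$, already of the right order. If $0<\delta<\varphi^{1/\alpha}$, apply exponential Chebyshev with $s>0$ to $-X_N$, giving
\[
\ln P\bigl[X_N\leq N^{\beta/\alpha}(\varphi^{1/\alpha}-\delta)\bigr]
\leq N^{\beta/\alpha}\bigl[\alpha\varphi^{1/\alpha}(e^{-s/\alpha}-1)+s(\varphi^{1/\alpha}-\delta)\bigr](1+o(1)),
\]
which minimized at $s^*=-\alpha\ln(1-\delta\varphi^{-1/\alpha})>0$ produces a strictly negative leading coefficient $-I^-(\delta,\varphi)$. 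In both cases, $\ln P\leq -c(\delta,\varphi)\,N^{\beta/\alpha}$ with $c>0$; dividing by $N^{r_0}$ and using $r_0<\beta/\alpha$ gives the limit $-\infty$.

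The main obstacle is purely bookkeeping: the asymptotic $\ln Z(\phi)\sim \alpha\phi^{1/\alpha}$ is not an identity, so one must verify that at the optimizing choice $t^*$ (which is a finite constant independent of $N$), the argument $N^\beta\varphi\,e^{t^*}\to\infty$ so Lemma \ref{lem: order of R} applies uniformly, and that the subleading $o(1)$ terms do not destroy the sign of the leading rate $-I^\pm(\delta,\varphi)$. Once this is in place, the result is immediate.
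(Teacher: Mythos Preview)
Your proposal is correct and follows essentially the same approach as the paper's proof: both use an exponential Chebyshev/Cram\'er bound together with the asymptotic $\ln Z(\phi)\sim \alpha\phi^{1/\alpha}$ from Lemma~\ref{lem: order of R} to show each tail probability decays like $\exp(-cN^{\beta/\alpha})$, then divide by $N^{r_0}$ with $r_0<\beta/\alpha$. Your version is slightly more explicit (computing the optimizer $t^*$ and the rate $I^\pm$, and separately treating $\varphi=0$ and $\delta\geq\varphi^{1/\alpha}$), but the argument is the same.
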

\begin{proof}
Let $\phi = \varphi^{1/\alpha}$.
To prove the lemma, it suffices to show that
\[
\lim_{N\to \infty}
N^{-r_0}
 \ln P\big[X_N \geq ( \phi+\delta)N^{\beta/\alpha} \big]
 =
 \lim_{N\to \infty}
N^{-r_0}
 \ln P\big[X_N \leq ( \phi-\delta)N^{\beta/\alpha} \big]
=
-\infty.
\]
To this end, we notice that, for any $x>0$, 
\[
\begin{split}
\ln P\big[X_N \geq ( \phi+\delta)N^{\beta/\alpha} \big]
\leq&
\ln\big(
e^{-( \phi+\delta)N^{\beta/\alpha}x}
E[e^{xX_N} ] \big)\\
=&
- ( \phi+\delta)N^{\beta/\alpha} x 
+
\ln \dfrac{Z(e^x N^\beta \varphi) }
{ Z(N^\beta \varphi)}.
\end{split}
\]
By Lemma \ref{lem: order of R},
$\ln \dfrac{Z(e^x N^\beta \varphi) }
{ Z(N^\beta \varphi)}
\sim
\alpha \phi N^{\beta/\alpha} 
(e^{x/\alpha} - 1)$.
Then, for 
\[
K_{\delta,\phi,\alpha} 
:= 
\max_{x>0} 
\big\{ 
 ( \phi +\delta ) x 
 -
  \alpha \phi ( e^{x/\alpha} - 1) \big\}
\]
we have
\[
\limsup_{N\to \infty}
N^{-\beta/\alpha}
\ln P\big[X \geq ( \phi+\delta)N^{\beta/\alpha} \big]
\leq
-K_{\delta,\phi,\alpha} <0.
\]
Since $r_0 < \beta/\alpha$, we obtain
\begin{equation} \label {eqn: prob,> delta}
\lim_{N\to \infty}
N^{-r_0}
 \ln P\big[X_N  \geq ( \phi+\delta)N^{\beta/\alpha} \big]
 =
-\infty.
\end{equation}
On the other hand, for all $x>0$, we also have
\[
\ln P\big[X_N  \leq ( \phi-\delta)N^{\beta/\alpha} \big]
\leq
\ln\big(
e^{( \phi - \delta)N^{\beta/\alpha} x }
E[e^{-xX_N} ] \big).
\]
By a similar argument employed to prove \eqref{eqn: prob,> delta}, we have 
\[
 \lim_{N\to \infty}
N^{-r_0}
 \ln P\big[X_N  \leq ( \phi-\delta)N^{\beta/\alpha} \big]
=
-\infty.
\]
The lemma is now proved.
\end{proof}

We now consider the behavior near `critical' slow sites in the $n^\alpha$ setting used in Step 5.

\begin{Lem} \label{lem: bdry at critical}
Let $g$ be of $n^\alpha$ type.
Then, for any $j\in J_c$, $\delta>0$, and $G\in C[0,T]$, we have
\begin{equation}  \label {eqn: D_c bdry with theta}
\lim_{\eps\to 0} Q
\Big[
\Big| \int_0^T G(s) 
\Big(
 \lambda_j  \Phi\big( \rho^{+,\theta}_{x_j}(s)  \big) 
 -
 (\m_j(s) )^\alpha
  \Big)
  ds \Big| > \delta
\Big]
=0,
\end{equation}
where $\rho^{+,\theta}_{x_j}(s) 
=\|\hat \iota_\eps\| ^{-1}
\langle \hat\iota_\eps(\cdot-x_j) , \rho \rangle$.
Consequently, 
\begin{equation} \label {eqn: D_c bdry}
Q\big[ 
 \m_j(t)  =
 \big( \lambda_j  \Phi\big( \rho(t,x_j)  \big) 
 \big)^{1/\alpha}
\text{ for a.e.~$t\in[0,T]$ and all $j \in J_c$}
\big] = 1.
\end{equation}  
\end{Lem}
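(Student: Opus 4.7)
We mirror the five-step structure of the proof of Lemma \ref{lem: boundary cond, super}, but since $j\in J_c$ means $\beta_j=\alpha$, the quantity $N^{-\beta_j}(\eta_s(k_{j,N}))^\alpha=(N^{-1}\eta_s(k_{j,N}))^\alpha$ is order-one rather than sublinear and cannot be frozen at its initial value: the substitute for the super-slow identification ``$N^{-\beta_j/\alpha}\eta_0\approx(\lambda_j\Phi(c_0))^{1/\alpha}$'' will be a direct identification of $N^{-1}\eta_s(k_{j,N})$ with the limiting atomic mass $\m_j(s)$ of $\pi_s$ at $x_j$, performed by testing $\pi^N_s$ against a suitable approximation of $\delta_{x_j}$.

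\textbf{Steps A--C (essentially Steps 1 and 4 of Lemma \ref{lem: boundary cond, super}).} Step A: using $g(n)/n^\alpha\to 1$ and Lemma \ref{lem: total nmbr bound}, replace $N^{-\alpha}g(\eta_s(k_{j,N}))$ by $(N^{-1}\eta_s(k_{j,N}))^\alpha$ uniformly in $s$, in $\P_N$-expectation. Step B: invoke Lemma \ref{local replace, slow site} to replace $N^{-\alpha}g(\eta_s(k_{j,N}))$ by $\lambda_j\Phi(\eta_s^{\eps N,+}(k_{j,N}))$, time-integrated with $G$. Step C: rewrite $\eta_s^{\eps N,+}(k_{j,N})=\|\hat\iota_\eps\|^{-1}\langle\hat\iota_\eps(\cdot-x_j),\pi^N_s\rangle$ up to a vanishing error controlled by \eqref{nbhd expected nmbr}, and use Lipschitz continuity of $\Phi$ together with weak convergence $Q^N\Rightarrow Q$ in the uniform topology to obtain, in probability,
$$
\int_0^T G(s)(N^{-1}\eta_s(k_{j,N}))^\alpha\,ds\ \longrightarrow\ \int_0^T G(s)\,\lambda_j\Phi\big(\rho^{+,\theta}_{x_j}(s)\big)\,ds
$$
as $N\to\infty$ and then $\eps\to 0$.

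\textbf{Step D (atom extraction) and Step E (conclusion).} To match the left-hand side above with $\int_0^T G(s)(\m_j(s))^\alpha\,ds$, fix a continuous $F_\eps\colon\T\to[0,1]$ with $F_\eps(x_j)=1$ and $\operatorname{supp}F_\eps\subset[x_j-\eps^2,x_j+\eps^2]$ disjoint from the other defects. Then $\langle F_\eps,\pi^N_s\rangle=N^{-1}\eta_s(k_{j,N})+r^N_\eps(s)$ with $\sup_s\E_N[r^N_\eps(s)]=O(\eps^2)$ by \eqref{nbhd expected nmbr}, while $\langle F_\eps,\pi_s\rangle=\m_j(s)+O(\eps^2)$ $Q$-a.s.\ using $\|\rho\|_\infty<\infty$ from Lemma \ref{lem: abs continuity}. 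The subadditivity $|a^\alpha-b^\alpha|\le|a-b|^\alpha$ on $a,b\ge 0$ for $\alpha\in(0,1]$, combined with Jensen's inequality for the concave $x\mapsto x^\alpha$, upgrades both residuals to $O(\eps^{2\alpha})$ after applying $(\cdot)^\alpha$ and integrating in $s$. Continuity of $\pi_\cdot\mapsto\int_0^T G(s)(\langle F_\eps,\pi_s\rangle)^\alpha\,ds$ on $D([0,T],\Mb)$ in the uniform topology together with $Q^N\Rightarrow Q$ then yields
$$
\int_0^T G(s)(N^{-1}\eta_s(k_{j,N}))^\alpha\,ds\ \longrightarrow\ \int_0^T G(s)(\m_j(s))^\alpha\,ds
$$
in probability. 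Chaining with Step C delivers \eqref{eqn: D_c bdry with theta}. For \eqref{eqn: D_c bdry}, the $Q$-a.s.\ continuity of $\rho(s,\cdot)$ (Lemma \ref{lem: abs continuity}, Remark \ref{rmk on w-deriv}) and dominated convergence send $\Phi(\rho^{+,\theta}_{x_j}(s))\to\Phi(\rho(s,x_j))$ as $\eps\to 0$; separability of $C[0,T]$ and finiteness of $J_c$ then promote the identity $\m_j(t)=(\lambda_j\Phi(\rho(t,x_j)))^{1/\alpha}$ to hold for a.e.\ $t\in[0,T]$, simultaneously for all $j\in J_c$, $Q$-almost surely.

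\textbf{Main obstacle.} The only genuinely new difficulty is Step D: transporting $N^{-1}\eta_s(k_{j,N})$ to $\m_j(s)$ through the nonlinear map $x\mapsto x^\alpha$ while having access only to vague convergence of $\pi^N$. The H\"older-type estimate $|a^\alpha-b^\alpha|\le|a-b|^\alpha$ available for $\alpha\in(0,1]$ is what makes the argument go through: it converts the $O(\eps^2)$ bulk-mass leakage of an $F_\eps$-test into an $O(\eps^{2\alpha})$ error after applying $(\cdot)^\alpha$, sharper than any mean value bound and sharp enough to close the limit when $N\to\infty$ is taken before $\eps\to 0$.
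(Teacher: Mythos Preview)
Your proposal is correct and follows essentially the same approach as the paper: both arguments replace $N^{-\alpha}g(\eta_s(k_{j,N}))$ by $(N^{-1}\eta_s(k_{j,N}))^\alpha$ via $g(n)\sim n^\alpha$, invoke Lemma~\ref{local replace, slow site} and Step~4 of Lemma~\ref{lem: boundary cond, super} for the $\Phi(\rho^{+,\theta}_{x_j})$ side, and extract the atom by testing against a shrinking bump $F_\eps$ at $x_j$, controlling the bulk leakage through $|a^\alpha-b^\alpha|\le|a-b|^\alpha$ together with Jensen's inequality $\E_N[B^\alpha]\le\E_N[B]^\alpha$. The only cosmetic differences are the ordering of the replacements and your choice of an $\eps^2$-neighborhood for $\operatorname{supp}F_\eps$ versus the paper's $\eps$-neighborhood.
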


\begin{proof}
Fix $j\in J_c$ and $G\in C[0,T]$.
As $g(n) \sim n^\alpha$, arguments as in Step $1$ of the proof of 
Lemma \ref{lem: boundary cond, super}
show that $N^{-\alpha}g(\eta_s(k_{j,N})) $ may be replaced by $N^{-\alpha} (\eta_s(k_{j,N}) )^\alpha$.  Hence, we have
\begin{equation} \label {eqn: g eta simple, critical}
\lim_{N\to \infty}
\P_N \Big[ \Big| 
\int_0^T G(s) 
\Big(
 N^{-\alpha}g(\eta_s(k_{j,N})) 
-
  (N^{-1} \eta_s(k_{j,N}) )^\alpha \Big)
  ds\Big| >\delta \Big]
  =0.
\end{equation}

Let $\{F_\eps(x)\}$ be a sequence of nonnegative smooth functions such that $F_\eps$'s are supported on $(x_j-\eps,x_j+\eps)$, $\|F_\eps\|_\infty \leq 1$, and $F_\eps(x) =1$ for $|x-x_j|\leq \eps/2$.
Then, $$ | \langle F_\eps, \pi^N_s \rangle - N^{-1}\eta_s(k_{j,N}) | \leq B_{\eps,N} (\eta_s)$$ 
where 
$B_{\eps,N} (\eta) 
= N^{-1} \sum_{k\notin \D_N} 
\id_{(-\eps,\eps)} (k/N -x_j)\, \eta(k) $
for $\eps$ small and $N$ large.
As $0< \alpha \leq 1$, we have 
\[
\E_N \big[
\big|
\langle F_\eps, \pi^N_s \rangle ^\alpha
-
(N^{-1}\eta_s(k_{j,N} ))^\alpha \big|
 \big]
 \leq
 \E_N \big[
(B_{\eps,N})^\alpha
 \big]
 \leq
  \E_N \big[
B_{\eps,N} \big] ^\alpha, 
\]
which vanishes as $N\to \infty$ and $\eps\to 0$ by \eqref{nbhd expected nmbr} in Lemma \ref{lem: total nmbr bound}.
Therefore, we may replace 
 $N^{-1} \eta_s(k_{j,N})$ in \eqref{eqn: g eta simple, critical} to obtain
\[
\lim_{\eps\to0}
\limsup_{N\to \infty}
\P_N \Big[ \Big| 
\int_0^T G(s) 
\Big(
 N^{-\alpha}g(\eta_s(k_{j,N})) 
-
  \langle F_\eps, \pi^N_s \rangle ^\alpha \Big)
  ds\Big| >\delta \Big]
  =0.
\]

By Lemma \ref{local replace, slow site}, $N^{-\alpha}g(\eta_s(k_{j,N}))$ may be replaced by $\lambda_j \Phi(\eta_s^{\eps N,+}(k_{j,N}))$. 
Moreover, following Step $4$ in the proof of Lemma \ref{lem: boundary cond, super} to replace $\eta_s^{\eps N,+}(k_{j,N})$ with $ \rho^{+,\theta}_{x_j}(s)$, we obtain
\[
\lim_{\eps\to 0} Q
\Big[
\Big| \int_0^T G(s) 
\Big(
 \lambda_j  \Phi\big( \rho^{+,\theta}_{x_j}(s)  \big)
 -
   \langle F_\eps, \pi_s \rangle^\alpha  \Big)
  ds \Big| > \delta
\Big]
=0.
\]
Moreover, by Lemma \ref{lem: abs continuity}, $\int_0^T G(s) \langle F_\eps, \pi_s \rangle^\alpha  ds$ converges to 
\[
\int_0^T G(s) ( \pi_s(\{x_j\}) )^\alpha ds
=
\int_0^T G(s) (\m_j(s))^\alpha ds
\] 
almost surely with respect to $Q$.  Hence, with these observations,
\eqref  {eqn: D_c bdry with theta} in the lemma holds.
Following the argument in Step $5$ of Lemma \ref{lem: boundary cond, super}, we obtain 
\eqref{eqn: D_c bdry} from 
\eqref{eqn: D_c bdry with theta}, finishing the proof. 
\end{proof}

We now turn to the behavior near `critical' slow sites in the $g$ bounded setting.
\begin{Lem} \label {lem: boundary, bounded}
Let $g$ be bounded.
Then, for any $j\in J_c$, $\delta>0$, and $G \in C[0,T]$, we have
\begin{equation} \label{max_rho, g bded}
\lim_{\eps\to 0} Q
\Big[
 \int_0^T |G(s) |
\big(
 \Phi\big( \rho^{+,\theta}_{x_j}(s)\big)
 -
 \phi_j 
  \big)
  ds  > \delta
\Big]
=0
\end{equation}
and
\begin{equation} \label{m=0_if_rho<max}
\lim_{\eps\to 0} Q
\Big[
\Big|
 \int_0^T G(s) 
( \m_j(s) \wedge 1)
\big( \phi_j - 
 \Phi\big( \rho^{+,\theta}_{x_j}(s)\big)
  \big)
  ds  \Big | > \delta
\Big]
=0.
\end{equation}
where $\rho^{+,\theta}_{x_j}(s) 
:=\|\hat \iota_\eps\| ^{-1}
\langle \hat\iota_\eps(\cdot-x_j) , \rho \rangle$,
$\phi_j = \lambda^{-1}_j$,
and
$a \wedge b = \min \{ a,b \}$. 
 Consequently, 
\begin{align} \label {eqn: D_c bdry, bounded g}
&Q\big[ 
\Phi(\rho(t,x_j)) \leq \lambda_j^{-1},\,
 \m_j(t)  = 
 \m_j(t) \id_{\Phi( \rho(t,x_j)) = \lambda_j^{-1}}. \\
&\ \ \ \ \ \ \ \ \ \text{ for a.e.~$t\in[0,T]$ and all $j \in J_c$}
\big] = 1. \nonumber
\end{align}  
\end{Lem}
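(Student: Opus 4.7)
The plan is to follow the structure of Lemma \ref{lem: bdry at critical}, exploiting the key new feature that in the bounded setting $g(n) \leq g_\infty = 1$. Fix $j \in J_c$, so $\beta_j = 0$ and $\lambda_j > 1$, and observe the pointwise upper bound $g_{k_{j,N},N}(\eta) = g(\eta)/\lambda_j \leq \phi_j := \lambda_j^{-1}$. This yields \eqref{max_rho, g bded} quite directly: since $\int_0^T |G(s)|(g_{k_{j,N},N}(\eta_s(k_{j,N})) - \phi_j)\,ds \leq 0$ a.s., I apply Lemma \ref{local replace, slow site} to replace $g_{k_{j,N},N}(\eta_s(k_{j,N}))$ by $\Phi(\eta_s^{\eps N, +}(k_{j,N}))$ in $L^1(\P_N)$, then follow Step 4 of the proof of Lemma \ref{lem: boundary cond, super} to replace $\eta_s^{\eps N,+}(k_{j,N})$ by $\|\hat\iota_\eps\|^{-1}\langle\hat\iota_\eps(\cdot - x_j), \pi_s^N\rangle$, and finally use $Q^N \Rightarrow Q$ together with Lemma \ref{lem: abs continuity} to identify the latter with $\rho^{+,\theta}_{x_j}(s)$.

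The more interesting estimate is \eqref{m=0_if_rho<max}, for which the crucial microscopic input is the complementary-slackness-type bound valid for all $A \geq 0$ and $n \in \N_0$:
\[
(N^{-1} n \wedge 1)(1 - g(n)) \;\leq\; A/N \;+\; (1 - g(A)),
\]
obtained by splitting on $\{n \leq A\}$ versus $\{n > A\}$ and using monotonicity of $g$. Since $g(A) \uparrow g_\infty = 1$, this gives
\[
\lim_{N \to \infty} \E_N\Big[ \int_0^T |G(s)|\, (N^{-1}\eta_s(k_{j,N}) \wedge 1)\,(\phi_j - g_{k_{j,N},N}(\eta_s(k_{j,N})))\, ds \Big] = 0
\]
by sending $N \to \infty$ then $A \to \infty$.

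It remains to translate this microscopic identity into its macroscopic form. I would introduce a smooth cutoff $F_\eps$ supported on $(x_j - \eps, x_j + \eps)$ with $F_\eps(x_j) = 1$ and $\|F_\eps\|_\infty \leq 1$, as in Lemma \ref{lem: bdry at critical}. Using $|a\wedge 1 - b\wedge 1| \leq |a - b|$ together with \eqref{nbhd expected nmbr}, I replace $N^{-1}\eta_s(k_{j,N})$ inside the truncation by $\langle F_\eps, \pi_s^N\rangle$; and via Lemma \ref{local replace, slow site} (together with the replacement of Step 4 of Lemma \ref{lem: boundary cond, super}) I replace $g_{k_{j,N},N}(\eta_s(k_{j,N}))$ by $\Phi(\|\hat\iota_\eps\|^{-1}\langle\hat\iota_\eps(\cdot - x_j), \pi_s^N\rangle)$. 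The resulting functional is continuous on $D([0,T],\Mb)$ under the uniform topology (Lipschitz continuity of $\Phi$ and of $x \mapsto x \wedge 1$), so $Q^N \Rightarrow Q$ transfers the vanishing to $Q$. Then $\langle F_\eps, \pi_s\rangle \to \m_j(s)$ as $\eps \to 0$ by Lemma \ref{lem: abs continuity}, while $\|\hat\iota_\eps\|^{-1}\langle\hat\iota_\eps(\cdot - x_j),\pi_s\rangle = \rho^{+,\theta}_{x_j}(s)$ since $\hat\iota_\eps$ is supported in $(0,\eps)$ and excludes the atom at $x_j$; hence \eqref{m=0_if_rho<max} follows.

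The consequence \eqref{eqn: D_c bdry, bounded g} then comes from combining both estimates with the continuity of $\rho(t,\cdot)$ at $x_j$ (from Lemma \ref{lem: abs continuity} and Proposition \ref{prop: weak derivative of Phi}), which yields $\rho^{+,\theta}_{x_j}(s) \to \rho(s, x_j)$ as $\eps \to 0$: then \eqref{max_rho, g bded} gives $\Phi(\rho(s, x_j)) \leq \phi_j$, and \eqref{m=0_if_rho<max} gives $(\m_j(s) \wedge 1)(\phi_j - \Phi(\rho(s, x_j))) = 0$, equivalently $\m_j(s) = \m_j(s)\,\id_{\Phi(\rho(s, x_j)) = \phi_j}$. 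I expect the main obstacle to be the careful handling of the nonlinear truncation $\cdot \wedge 1$ through the replacement chain; the saving grace is that it is $1$-Lipschitz, so replacement errors propagate linearly, and the pointwise slackness bound depends essentially (and only) on the boundedness of $g$, which is the new ingredient relative to the $n^\alpha$ case where $\m_j$ is determined outright by $\Phi(\rho)$.
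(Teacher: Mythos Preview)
Your proposal is correct and follows essentially the same route as the paper's proof: the pointwise bound $g(n)/\lambda_j\le\phi_j$ for \eqref{max_rho, g bded}, a case split on $\{\eta_s(k_{j,N})\le A\}$ versus $\{\eta_s(k_{j,N})>A\}$ for the microscopic slackness behind \eqref{m=0_if_rho<max}, followed by Lemma~\ref{local replace, slow site} and Step~4 of Lemma~\ref{lem: boundary cond, super} to pass to $Q$. The only cosmetic difference is that the paper writes the slackness directly with $\langle F_\eps,\pi_s^N\rangle\wedge 1$ in place of $N^{-1}\eta_s(k_{j,N})\wedge 1$, whereas you establish it first at the defect site and then replace; both orderings work, and your pointwise inequality $(N^{-1}n\wedge 1)(1-g(n))\le A/N+(1-g(A))$ is a clean packaging of the same estimate. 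One point the paper makes explicit and you leave implicit is that Lemma~\ref{local replace, slow site} still applies when the deterministic test function $G(s)$ is replaced by the bounded random prefactor $G(s)\big(\langle F_\eps,\pi_s^N\rangle\wedge 1\big)$; this is harmless since the entropy/Feynman--Kac bounds in its proof only use $\|G\|_\infty$, but it is worth stating.
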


\begin{proof}
Fix $j\in J_c$ and $G\in C[0,T]$.
We first address \eqref{max_rho, g bded}.
As $g(\cdot)\leq 1$, it is trivial that, for all $N$
\[
\P_N \Big[ 
\int_0^T | G(s) |
\left(
 \lambda_j^{-1}g (\eta_s(k_{j,N}))
 -
 \phi_j
 \right)
  ds >\delta \Big]
  =0.
\]
We now use the local `replacement' Lemma \ref{local replace, slow site} to replace
$\lambda_j^{-1}g (\eta_s(k_{j,N}))$ by 
$\Phi(\eta_s^{\eps N,+}(k_{j,N}))$, to obtain
\[
\lim_{\eps\to 0} \limsup_{N\to \infty}
\P_N \Big[ 
\int_0^T | G(s) |
\left(
\Phi(\eta_s^{\eps N,+}(k_{j,N}))
 -
 \phi_j
 \right)
  ds >\delta \Big]
  =0.
\]
Then, \eqref{max_rho, g bded} follows by taking $N\to \infty$,
 cf.~Step $4$ in Lemma \ref{lem: boundary cond, super}.

To show \eqref{m=0_if_rho<max},
we take the same sequence $\{F_\eps(x)\}$ from the proof of Lemma \ref{lem: bdry at critical}.
We now observe a key microscopic boundary relation:
\begin{equation} \label{eqn: mj=0_if_rho}
\lim_{\eps\to 0}  
\limsup_{N\to \infty}
\E_N
\Big[
 \int_0^T \big |
\left( \langle F_\eps, \pi^N_s \rangle  \wedge 1  \right)
\big( \phi_j - 
 \lambda_j^{-1}g (\eta_s(k_{j,N}))
  \big) \big |
  ds  
\Big]
=0.
\end{equation}
Indeed, as $\lim_{n\to\infty}g(n) = 1$, let $A=A(\e)$ be such that $|g(n) - 1| < \e$ for all $n\geq A$. 
Then, in the above display,  on the one hand, the absolute value is bounded above by $\e$ when $\eta_s(k_{j,N}) \geq A$. On the other hand, $\langle F_\eps, \pi^N_s \rangle$ is less than $N^{-1}(A + \sum_k^{\circ,\eps} \eta_s(k) )$
when $\eta_s(k_{j,N}) < A$. 
 Here, the sum $\sum_k^{\circ,\eps}$ is over all $k$ such that $|k/N -x_j| \leq \eps$ and $k\neq k_{j,N}$. In considering these cases, when
 $\eta_s(k_{j,N}) < A$ and $\eta_s(k_{j,N})\geq A$ respectively,
the estimate \eqref{eqn: mj=0_if_rho} follows from \eqref{nbhd expected nmbr} of the `particle numbers' Lemma \ref{lem: total nmbr bound}.

Now, as $| \langle F_\eps, \pi^N_s \rangle  \wedge 1 | $ is bounded, we observe that the local `replacement' Lemma \ref{local replace, slow site}
remains effective when the test function is taken in form $G(s) 
\left( \langle F_\eps, \pi^N_s \rangle  \wedge 1  \right)$. 
Thus, we may replace $\lambda_j^{-1}g (\eta_s(k_{j,N}))$ by 
$\Phi(\eta_s^{\eps N,+}(k_{j,N}))$ and obtain
\[
\lim_{\eps\to 0}  
\limsup_{N\to \infty}
\P_N
\Big[
\Big|
 \int_0^T G(s) 
\left( \langle F_\eps, \pi^N_s \rangle  \wedge 1  \right)
\big( \phi_j - 
\Phi(\eta_s^{\eps N,+}(k_{j,N}))
  \big)
  ds  \Big | > \delta
\Big]
=0.
\]
Again, by following Step $4$ in Lemma \ref{lem: boundary cond, super}, we have
\[
\lim_{\eps\to 0}  
Q
\Big[
\Big|
 \int_0^T G(s) 
\left( \langle F_\eps, \pi_s \rangle  \wedge 1  \right)
\big( \phi_j - 
 \Phi\big( \rho^{+,\theta}_{x_j}(s)\big)
  \big)
  ds  \Big | > \delta
\Big]
=0. 
\]
Also, by Lemma \ref{lem: abs continuity},
 it holds that
$\lim_{\eps\to 0} \int_0^T 
\left| \left( \langle F_\eps, \pi_s \rangle  \wedge 1  \right) -
 \left( \m_j(s) \wedge 1  \right) \right| ds
 = 0$ almost surely with respect to $Q$. 
 As $\phi_j$ and $\Phi(\cdot)$ are bounded, 
 we conclude the proof of \eqref{m=0_if_rho<max}.

It remains to show \eqref{eqn: D_c bdry, bounded g}. Following Step $5$ in Lemma \ref{lem: boundary cond, super}, we obtain from \eqref{max_rho, g bded} that, $Q$-almost surely,
$\Phi( \rho(s,x_j) )\leq \phi_j$
for a.e.~$t\in [0,T]$ and all $j\in J_c$. 
To finish the proof, it suffices to show that $\m_j(s) = 0$ whenever $\Phi ( \rho(s,x_j)) < \phi_j$. In fact,
by \eqref{m=0_if_rho<max}, we have
$( \m_j(t) \wedge 1)
\big( \phi_j - 
 \Phi\big( \rho(s,x_j)\big)
  \big) = 0$. 
  As we have $ \Phi( \rho(s,x_j))\leq \phi_j$, we have that $\m_j(s) \wedge 1 = 0$ (therefore $\m_j(s) = 0$) when $\Phi ( \rho(s,x_j)) < \phi_j$, completing the proof. 
\end{proof}

\section{Local $1$ and $2$-block estimates of bulk sites}
\label{sec: rplmt lem}
In this section we address the $1$ and $2$-block estimates for non-defect sites $\T_N \setminus \D_N$.
These estimates are obtained through a Rayleigh-type estimation of a
variational eigenvalue expression derived from a Feynman-Kac bound.  These bounds will hold in either $g(n)\sim n^\alpha$ or $g$ bounded settings.

\subsection{Local $1$-block estimate}
We start from recalling the concept of spectral gap which is used to prove our local $1$-block estimate.
For $k\in\T_N$ and $l\geq 1$, define the set $\L_{k,l}=\left\{k-l,k-l+1,\ldots,k+l\right\}\subset \T_N$. 
Let $\Omega_{k,l} = \N_0^{\L_{k,l}}$ be the state space of
configurations restricted on sites $\L_{k,l}$.
Define the state space
of configurations with exactly $j$ particles on the sites $\L_{k,l}$:
$$\Omega_{k,l,j} = \Big\{\eta \in \Omega_{k,l}: \sum_{x\in \L_{k,l} }\eta(x) = j\Big \}.$$
Consider the generator $L_{k,l}$ on $\Omega_{k,l}$ given by
\begin{equation*}  \label {eqn: L_k l}
\begin{split}
L_{k,l}f(\eta)
=&
\sum_{x,x+1\in \L_{k,l}}
\Big\{
g(\eta(x))
\left [  f\left(\eta^{x,x+1} \right)  - f(\eta)   \right] 
+
g(\eta(x))
\left [  f\left(\eta^{x+1,x}  \right)   - f(\eta) \right ] 
\Big\}.
\end{split}
\end{equation*}
Recall the generator $L_N$ from \eqref{eqn: generator L}.
Notice that, for each $k,l$ such that $\L_{k,l}\cap \D_N=\emptyset$, the generator $L_{k,l}$ coincides with $L_N$ localized on $\L_{k,l}$.
 
For any $\rho>0$, let $\nu_\rho$ be the product measure on
$\Omega = \N_0^{\T_N}$ with common marginal
$\Pb_{\Phi(\rho)}$ on each site $k\in \T_N$,
and let $\nu^\rho_{k,l}$ be its restriction to $\Omega_{k,l}$.
Let $\nu_{k,l,j}$ be  $\nu_{k,l}^\rho$ conditioned on total number of particles on $\L_{k,l}$ being $j$. 
Notice that $\nu_{k,l,j}$ does not depend on $\rho$.  It is well-known that both $\nu^\rho_{k,l}$ and $\nu_{k,l,j}$ are  invariant measures with respect to the localized generator $L_{k,l}$ (cf.~\cite{A}).
For $\kappa = \nu_{k,l}^\rho$ or $\nu_{k,l,j}$,
the corresponding Dirichlet form is given by
\begin{equation} \label {d form symm}
\begin{split}
E_\kappa
\left[ f(-L_{k,l} f) \right]
=&
\sum_{x,x+1\in \L_{k,l}}
E_\kappa
 \left[ 
 g(\eta(x)) \left(  f\left(\eta^{x,x+1} \right)  - f(\eta)   \right)^2
 \right].
\end{split}
\end{equation}

For $j\geq 1$, let $b_{l,j}$ be the spectral gap of $-L_{k,l}$ on $\Omega_{k,l,j}$ (cf.\,p.~374, \cite{KL}):
\begin{equation} \label{def of b_lj}
b_{l,j}:=
\inf_{f} 
\dfrac
 {E_{\nu_{k,l,j}}[ f(-L_{k,l}f)]}
 {{\rm Var}_{\nu_{k,l,j}} (f)}
\end{equation}
where the infimum is taken over all $L^2(\nu_{k,l,j})$ functions $f$ from $\Omega_{k,l,j}$ to $\R$.
For all $l, j\geq 1$, as $\Omega_{k,l,j}$ is a finite space and the localized process is irreducible, we have $b_{l,j}>0$. 
As a consequence, we have the following Poincar\'e inequality: for all $f\in L^2(\nu_{k,l,j})$
\begin{equation} \label{Poincare for 1 block}
{\rm Var}_{\nu_{k,l,j}} (f)
\leq
 C_{l,j} 
 E_{\nu{k,l,j}}
 \left[ f(-L_{k,l}f) \right]
\end{equation}
where 
$
 C_{l,j}
:=
b_{l,j}^{-1} < \infty$ for $j\geq 1$ and $C_{l,0} = 0$.
We remark that even though, for a large class of $g(\cdot)$'s, sharp estimates of
 $b_{l,j}$ are available in the literature, 
 we will only need that $b_{l,j}$ is strictly positive for all $l,j\geq 1$.

\medskip
We now prove the local $1$-block estimate for regular sites:
\begin{Lem}[Local $1$-block estimate]
\label{lem: 1 block}
For any bounded function $G$ on $[0,T]\times \T$, we have
\[
\limsup_{l\to \infty} \limsup_{N\to \infty}
\sup_{k,k' }
\E_N
\Big[
\Big |
\int_0^T
G(s,k'/N) 
\left(
g(\eta_s(k')) -  \Phi(\eta_s^l(k))\right) ds
\Big |
\Big]
=
0
\]
where the $\sup$ is taken over all $k$ and $k'$ such that $k'\in \Lambda_{k,l}$ and $\Lambda_{k,l} \cap \D_N =\emptyset$.
\end{Lem}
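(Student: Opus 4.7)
The plan is to apply the standard entropy-method reduction: combining the elementary $|x|\leq \gamma^{-1}\log(e^{\gamma x}+e^{-\gamma x})$ with the entropy inequality against $\Pam_{c_0}^N$ (paying $H(\mu^N|\Pam_{c_0}^N)/\gamma = O(N/\gamma)$) and the Feynman--Kac formula for the generator $N^2 L_N$, I would reduce the estimate to controlling, uniformly in $s\in[0,T]$, the Rayleigh-type expression
\[
\sup_{f:\|f\|_{L^2(\Pam_{c_0}^N)}=1}\bigl\{\langle V_s, f^2\rangle_{\Pam_{c_0}^N} - (N/\beta)\langle f,-L_N f\rangle_{\Pam_{c_0}^N}\bigr\},
\]
where $V_s(\eta) := G(s,k'/N)(g(\eta(k'))-\Phi(\eta^l(k)))$ and $\gamma=\beta N$. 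Since $V_s$ depends only on coordinates in $\Lambda_{k,l}$, which is disjoint from $\D_N$, the product structure of $\Pam_{c_0}^N$ lets me integrate out the complement of $\Lambda_{k,l}$ and replace $L_N$ by the localized $L_{k,l}$ with Dirichlet form \eqref{d form symm}, reducing the problem to a variational one on the finite space $\Omega_{k,l}$.

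Since each hyperplane $\Omega_{k,l,j}$ is $L_{k,l}$-invariant, conditioning on the total occupancy $j=\sum_{x\in\Lambda_{k,l}}\eta(x)$ further decomposes the variational expression into a convex combination indexed by $j$, with each summand a Rayleigh quotient on $\Omega_{k,l,j}$ under $\nu_{k,l,j}$. On $\Omega_{k,l,j}$ the block average is the constant $\rho_j := j/(2l+1)$, so $\Phi(\eta^l(k))\equiv \Phi(\rho_j)$; writing $\bar g_j := E_{\nu_{k,l,j}}[g(\eta(k'))]$, I split $V_s$ into the deterministic part $G(s,k'/N)(\bar g_j - \Phi(\rho_j))$ and the mean-zero fluctuation $G(s,k'/N)(g(\eta(k'))-\bar g_j)$. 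The deterministic part is $o(1)$ as $l\to\infty$ uniformly on bounded $\rho_j$ by the equivalence of ensembles for zero-range canonical measures, while the fluctuation is absorbed by the Dirichlet form via the Poincar\'e inequality \eqref{Poincare for 1 block}, since the coefficient $N/\beta$ grows with $N$ while each $b_{l,j}>0$ for fixed $(l,j)$.

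The main obstacle is controlling contributions from large occupancies $j$, for which neither the equivalence of ensembles nor the spectral gap $b_{l,j}$ is uniform (and $g(\eta(k'))$ itself can be large when $\alpha>0$). Here I would exploit attractiveness (cf.\ Section~\ref{sec: coupling}): by $\mu^N_s \leq \Pam^N_{c'}$ in the $n^\alpha$ setting and $\mu^N_s \leq \overline\kappa^N_{c'}$ in the bounded setting, the probability that $\sum_{x\in\Lambda_{k,l}}\eta_s(x)$ exceeds $(2l+1)A$ has exponential tail in $A$ uniform in $N$, $k$, and $s\in[0,T]$. Truncating at $j\leq (2l+1)A$ leaves only finitely many hyperplanes for each fixed $l$, with the tail error in $\E_N[\int_0^T V_s\,ds]$ absorbed using \eqref{nbhd expected nmbr} and \eqref{expected_g_all}. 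Passing the limits in the order $N\to\infty$, $l\to\infty$, $A\to\infty$, $\beta\to\infty$ then yields the lemma.
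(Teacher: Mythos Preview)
Your outline is correct and follows the paper's proof essentially step for step: truncation at large block density via attractiveness, entropy inequality plus Feynman--Kac, localization to $\Lambda_{k,l}$, decomposition by the conserved particle number $j$, equivalence of ensembles for the centering term, and spectral-gap control of the mean-zero fluctuation. Two small precisions are worth noting: the large-density cutoff must be performed \emph{before} the entropy/Feynman--Kac step (so that the function entering the variational formula is bounded, as required by the perturbation argument), and the precise tool that turns the Poincar\'e inequality \eqref{Poincare for 1 block} into the desired bound on $E_{\nu_{k,l,j}}[\hat V f]-(N/\gamma)E_{\nu_{k,l,j}}[\sqrt f(-L_{k,l})\sqrt f]$ is the Rayleigh expansion (Appendix~3, Theorem~1.1 of \cite{KL}) rather than the raw Poincar\'e inequality; relatedly, the tail error $\E_N[|V_s|\id_{\eta_s^l(k)>A}]$ is handled via a second-moment Markov bound under the dominating measure $\Pam^N_{c'}$ (resp.\ $\kappa^N_{c'}$), not via \eqref{nbhd expected nmbr}--\eqref{expected_g_all} alone.
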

\begin{proof}
We separate the argument into $5$ steps.

\vskip .1cm
{\it Step 1.}
We first introduce a cutoff of large densities.
Let 
\[
V_{k,k',l}(s,\eta)
:=
G(s,k'/N) 
\left(
g(\eta(k'))
- 
\Phi(\eta^l(k))\right).\]
As $g(n)\leq g^* n$ and $\Phi(x)\leq g^*x$, we have
\[
\E_N \Big[ \Big | \int_0^T V_{k,k',l}(s,\eta_s)  \id_{\eta_s^l(k) > A} ds\Big |\Big]
\leq
 g^* \|G\|_\infty  \int_0^T 
  \E_N \left[ \left(\eta_s(k')+ \eta_s^l(k) \right)  \id_{\eta_s^l(k)> A} \right] ds .
\]
By attractiveness (cf.~Section \ref{sec: coupling}), in both $g(n)\sim n^\alpha$ and bounded settings (as $\Lambda_{k,l}\cap \D_N=\emptyset$, the stochastic bound $\kappa^N_{c'}$ and $\Pam^N_{c'}$  agree with $\nu_{c'}$ on $\Lambda_{k,l}$), the last expectation is bounded above by
\[
\begin{split}
E_{\Pam_{c'}^N} \left[ \left(\eta(k')+\eta^l(k) \right)  \id_{\eta^l(k) > A} \right]
\leq&
A^{-1} 
E_{\nu_{c'}} 
\left[ \left(\eta(k') \eta^l(k) \right) 
+
\left(\eta^l(k) \right)^2 \right] \\
 \leq&
 A^{-1} 
E_{\nu_{c'}} 
\left[ \left(\eta(k') \right)^2 
+
2 \left( \eta^l(k) \right)^2  \right].
\end{split}
\]
Notice that $\left( \eta^l(k) \right)^2 \leq (2l+1)^{-1} \sum_{j\in \Lambda_{k,l}} \left(\eta(j) \right)^2$
and, under $\nu_{c'}$, $\left\{\eta(j)\right\}_{j\in \Lambda_{k,l}}$ has common distribution $\Pb_{\Phi(c')}$.
We obtain $\E_N \big[ \big | \int_0^T V_{k,k',l}(s,\eta_s)  \id_{\eta_s^l(k) > A} ds \big |\big] \to 0$ as $N$, $l$, and $A$ approach $\infty$ in this order. 

Therefore, to prove the lemma, it will be enough to show, for all $A>0$, that
\begin{equation*}
\limsup_{l\to \infty} \limsup_{N\to \infty}
\sup_{k,k'}
\E_N
\Big[
\Big |
\int_0^T
V_{k,k',l,A} (s,\eta_s) ds
\Big |
\Big]
=
0
\end{equation*}
where $V_{k,k',l,A} (s,\eta)
:=
V_{k,k',l}(s,\eta) \id_{\{\eta^l (k)\leq A\}}$.

\vskip .1cm
{\it Step 2.} 
As $H(\mu^N| \Pam_{c_0}^N)\leq C N$ for some $C<\infty$, it follows from the entropy inequality
\begin{equation*}
\begin{split}
\E_N
\Big[
\Big |
\int_0^T
V_{k,k',l,A} (s,\eta_s) ds
\Big |
\Big]
\leq 
\dfrac{C}{\gamma}
+
\dfrac{1}{\gamma N} \ln \E_{\Pam_{c_0}^N}
\Big[
\exp
 \Big\{
  \gamma N  
  \Big |
\int_0^T
V_{k,k',l,A} (s,\eta_s) ds
\Big |
  \Big\}
\Big].
\end{split}
\end{equation*}
The absolute value in the right hand side of last inequality can be dropped by using $e^{|x|} \leq e^x + e^{-x}$.
By Feynman-Kac formula (cf.\,p.336, \cite{KL}),
\begin{equation*}
\begin{split}
\dfrac{1}{\gamma N} \ln \E_{\Pam_{c_0}^N}
\Big[
\exp
 \Big\{
  \gamma N  
\int_0^T
V_{k,k',l,A} (s,\eta_s)
 ds
  \Big\}
\Big]
\leq
\dfrac 1 {\gamma N} 
\int_0^T \lambda_{N,l}(s) ds
\end{split}
\end{equation*}
where $\lambda_{N,l}(s)$ is the largest eigenvalue of 
$N^2 L_N + \gamma N V_{k,k',l,A}(s,\eta)$.

\vskip .1cm
{\it Step 3.}
Fix $s\in [0,T]$; we will omit the argument $s$ in
 $\lambda_{N,l}(s)$ to simplify notation, that is $ \lambda_{N,l} =  \lambda_{N,l}(s)$.
Note the variational formula for $\lambda_{N,l}$: 
\begin{equation*}
(\gamma N)^{-1} \lambda_{N,l}
=
\sup_f
\big\{
E_{\Pam_{c_0}^N}\big[  V_{k,k',l,A} f  \big]
-\gamma^{-1} N 
E_{\Pam_{c_0}^N}\big [
\sqrt f (-L_N \sqrt f)
\big]
\big\},
\end{equation*}
where the supremum is over all $f$ which are densities with respect to $\Pam_{c_0}^N$ (cf.\,\cite{KL}, p.\,377).

Let $f_{k,l}= E_{\Pam_{c_0}^N}\big[f|\Omega_{k,l}\big]$, be the conditional expectation of $f$ given the variables on $\L_{k,l}$. 
For $\mu=\Pam_{c_0}^N$ let $\mu_{k,l}$ be $\mu$ restricted on $\Lambda_{k,l}$. 
Clearly, $\mu_{k,l}=\nu_{k,l}^{c_0}$.
  Since the Dirichlet form $E_{\Pam_{c_0}^N}\left [
\sqrt f (-L_N \sqrt f)
\right]$ is convex, we have
\begin{equation*}
(\gamma N)^{-1} \lambda_{N,l}
\leq
\sup_{f_{k,l}}
\left\{
E_{\mu_{k,l}}\left [ V_{k,k',l,A} f_{k,l}  \right ]
-\gamma^{-1} N 
E_{\mu_{k,l}}\left [
\sqrt {f_{k,l}} (-L_{k,l} \sqrt {f_{k,l}})
\right]
\right\}.
\end{equation*}

\vskip .1cm
{\it Step 4.}
We now decompose $f_{k,l}  d\mu_{k,l}$ with respect to sets $\Omega_{k,l,j}$ of configurations with total particle number $j$ on $\L_{k,l}$:
\begin{equation} \label {beta 0, c klj}
E_{\mu_{k,l}}\left[  V_{k,k',l,A} f_{k,l}  \right]
=
\sum_{j\geq 0}
c_{k,l,j}(f) \int V_{k,k',l,A} f_{k,l,j} d\mu_{k,l,j},
\end{equation}
where
$ c_{k,l,j}(f) = \int_{\Omega_{k,l,j}} f_{k,l} d\mu_{k,l}$ and 
$ f_{k,l,j} = c_{k,l,j}(f)^{-1} \mu_{k,l} \left( \Omega_{k,l,j} \right) f_{k,l}$.  Here, $\sum_{j\geq 0} c_{k,l,j} = 1 $ and $f_{k,l,j}$ is a density with respect to $\mu_{k,l,j}$.

Straightforwardly, on $\Omega_{k,l,j}$, we have
\begin{equation*}
\begin{split}
\dfrac{L_{k,l} \sqrt {f_{k,l}}}{\sqrt {f_{k,l}}}
=
\dfrac{L_{k,l} \sqrt {f_{k,l,j}}}{\sqrt {f_{k,l,j}}}.
\end{split}
\end{equation*}
Using \eqref{beta 0, c klj}, we write
\begin{equation*}
E_{\mu_{k,l}}\Big[
\sqrt {f_{k,l}} (-L_{k,l} \sqrt {f_{k,l}})
\Big]
=
\sum_{j\geq 0}
c_{k,l,j}(f) 
E_{\mu_{k,l,j}}\Big[
\sqrt {f_{k,l,j}} (-L_{k,l} \sqrt {f_{k,l,j}})
\Big ].
\end{equation*}
Then, we get
\begin{equation*}
(\gamma N)^{-1} \lambda_{N,l}
\leq
\sup_{0\leq j\leq A(2l+1)}
\sup_f
\left\{
E_{\mu_{k,l,j}}\left[ V_{k,k',l,A} f  \right]
-\gamma^{-1} N 
E_{\mu_{k,l,j}}\left [
\sqrt f (-L_{k,l} \sqrt f)
\right]
\right\},
\end{equation*}
where the second supremum is on all densities $f$ with respect to $\mu_{k,l,j}$.

\vskip .1cm
{\it Step 5.}
Let
\begin{equation*}
\widehat V_{k,k',l,A}
=
V_{k,k',l,A}
-
E_{\mu_{k,l,j}}
\left[V_{k,k',l,A}   \right].
\end{equation*}

Let $C_{l,A,G}$ be such that $\| \widehat V_{k,k',l,A}\|_\infty \leq C_{l,A,G}$. Recall $C_{l,j}$ the inverse spectral gap of $L_{k,l}$ (cf.~\eqref{Poincare for 1 block}).
We now use the Rayleigh expansion (cf.\,\cite{KL}, pp.\,375--376, Appendix 3, Theorem 1.1)
\begin{equation} \label {eqn: rayleigh expansion}
\begin{split}
&E_{\mu_{k,l,j}}\left[   \widehat V_{k,k',l,A} f  \right]
-\gamma^{-1} N 
E_{\mu_{k,l,j}}\big [
\sqrt f (-L_{k,l} \sqrt f)
\big ]\\
\leq&
\dfrac{\gamma N^{-1}}{1-2C_{l,A,G} C_{l,j}\,\gamma N^{-1}}
E_{\mu_{k,l,j}}\left[
  \widehat V_{k,k',l,A}  (-L_{k,l} )^{-1}
  \widehat V_{k,l,A}
\right ].
\end{split}
\end{equation}

The spectral gap of $L_{k,l}$ also implies that $\|L_{k,l}^{-1}\|_2$, the $L^2(\mu_{k,l,j})$ norm of the operator $L_{k,l}^{-1}$ on 
mean zero functions, is less than or equal to $C_{l,j}$.
Now, by Cauchy-Schwarz and the estimate of $\|L_{k,l}^{-1}\|_2$, we have 
\begin{equation*} 
\begin{split}
E_{\mu_{k,l,j}}\left[
  \widehat V_{k,k',l,A}  (-L_{k,l} )^{-1}
  \widehat V_{k,k',l,A}
\right ]
\leq
C_{l,j}
E_{\mu_{k,l,j}}\left[
  \widehat V_{k,k',l,A} ^2
\right ]
\leq  C_{l,j}C^2_{l,A,G}.
\end{split}
\end{equation*}

Accordingly, retracing our steps, noting \eqref{eqn: rayleigh expansion}, we have that $\E_N
\big[
\big |
\int_0^T
 V_{k,k',l,A} (\eta_s) ds
\big |
\big]$ 
is less than or equal to
\begin{equation*}
\begin{split}
\dfrac{C_0}{\gamma}
+
\sup_{0\leq j\leq A(2l+1)}
\dfrac{T\gamma  N^{-1} C_{l,j} C_{l,A,G}^2 } {1-2C_{l,A,G} C_{l,j}\,\gamma N^{-1}}
+
T \sup_{0\leq j\leq A(2l+1)}
 E_{\mu_{k,l,j}}
\left[V_{k,k',l,A} \right].
\end{split}
\end{equation*}
Taking $N\to \infty$, first $\sup$ term vanishes. Notice that the expression $\sup E_{\mu_{k,l,j}}\left[ V_{k,k',l,A} \right]$  is independent of $N$ and vanishes as $l\to \infty$. 
In fact,  as $\mu_{k,l,j} = \nu_{k,l,j}$ is translation-invariant
\[   
\big| E_{\mu_{k,l,j}}\left[ V_{k,k',l,A} \right] \big|
\leq
\| G \|_\infty
\Big |
E_{\nu_{0,l,j}}
[g(\eta(0))]
-
E_{\nu_{j/(2l+1)}}[g(\eta(0))]
\Big |.
\]
By equivalence of ensembles (cf.\,p.355, \cite{KL}),  
the right hand side of the above disaplay vanishes as $l\to \infty$, uniformly for $\rho = j/(2l+1)\in [0,A]$. 
The lemma now is proved by letting $\gamma \to \infty$.
\end{proof}


\subsection{Local $2$-block estimate}
\label{subsec: 2 block}
 We now detail the local $2$-block estimate following the outline of the local $1$-block estimate.  
Recall the notation $\L_{k,l}$ from the $1$-block estimate and let  $\L_{k,k',l} = \L_{k,l} \cup \L_{k',l}$ for $|k-k'|>l$.
Define the generator $L_{k,k',l}$ on $\Omega_{k,k',l} = \N_0^{\L_{k,k',l}}$:
\begin{equation*}
\begin{split}
L_{k,k',l}f(\eta)
&= L_{k,l} f(\eta) + L_{k',l} f(\eta)  \\
&+
g(\eta(k+l))
\big [  f\big(\eta^{k+l,k'-l} \big)  - f(\eta)   \big] 
+
g(\eta(k'-l))
\big [  f\big(\eta^{k'-l,k+l}  \big)   - f(\eta) \big ] .
\end{split}
\end{equation*}
When $|k-k'|$ is large, the process governed by $L_{k,k',l}$ in effect treats the blocks $\L_{k,l}$ and $\L_{k',l}$ as adjacent, with a connecting bond.

Let $\Omega_{k,k',l,j}: = \{\eta\in \Omega_{k,k',l}: \sum_{x\in \L_{k,k',l}} \eta(x) = j\}$.
As before, the localized measure $\nu^\rho_{k,k',l}$ defined by $\nu_\rho$ limited to sites in $\L_{k,k',l}$, 
as well as $\nu_{k,k',l,j}$,  the canonical measure of $\nu^\rho_{k,k;,l}$ on $\Omega_{k,k',l,j}$, are both invariant and reversible with respect to $L_{k,k',l}$.

The corresponding Dirichlet form, with measure $\kappa$ given by $\mu_{k,k',l}$ or $\mu_{k,k',l,j}$, is given by
\begin{equation} \label {D form, 2 block}
\begin{split}
E_{\kappa} \left[ f(-L_{k,k',l} f) \right ]
=&
\sum_{x,x+1\in \L_{k,k',l}}
 E_\kappa \left[ 
g(\eta(x))
\left [  f\left(\eta^{x,x+1}  \right)   - f(\eta) \right ]^2
 \right ]\\
 &+
  E_\kappa \Big [
g(\eta(k+l))
\left [  f\left(\eta^{k+l,k'-l}  \right)   - f(\eta) \right ]^2
 \Big ].
\end{split}
\end{equation}

For $l,j\geq 1$, let $b_{l,l,j}$ be the spectral gap of $-L_{k,k',l}$ on $\Omega_{k,k',l,j}$ ( cf.~\eqref{def of b_lj}). As $b_{l,l,j}$ is strictly postive, we have the following
Poincar\'e inequality (cf.~\eqref{Poincare for 1 block}):
for all $f\in L^2 (\nu_{k,k',l,j})$
\begin{equation} \label{Poincare for 2 block}
{\rm Var}_{\nu_{k,k',l,j}} (f)
\leq
 C_{l,l,j} 
 E_{\nu_{k,k',l,j}}
 \left[ f(-L_{k,k',l}f) \right]
\end{equation}
where 
$
 C_{l,l,j} := b_{l,l,j}^{-1}$ for $j\geq 1$ and $C_{l,l,0} = 0$.

\medskip
We now state and show a local $2$-blocks estimate.  The scheme is similar to that of the local $1$-block estimate.

\begin{Lem}[Local $2$-block estimate] \label{lem: 2 blocks}
 We have
 \begin{equation} \label {eqn: local 2 block}
\limsup_{l\to \infty}
\limsup_{\eps \to 0} \limsup_{N\to \infty}
\sup_{k,k''}
\E_N
\Big[
\int_0^T
\left |
\Phi(\eta_s^l(k))
-
\Phi(\eta_s^{\eps N}(k''))
\right |
ds
\Big]
=
0
\end{equation}
where the $\sup$ is taken over all $k$ and $k''$ such that $ \Lambda_{k,l} \subset \Lambda_{k'',\eps N}$ and $\Lambda_{k'',\eps N} \cap \D_N =\emptyset$.
\end{Lem}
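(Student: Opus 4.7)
The plan is to follow the five-step Rayleigh scheme of Lemma \ref{lem: 1 block}, adapted to a comparison of two distant $l$-blocks sitting inside a common $\eps N$-window. In Step 1, since $\Lambda_{k'',\eps N}\cap\D_N=\emptyset$, attractiveness (cf.~Section \ref{sec: coupling}) dominates $\mu_t^N$ on this block by the translation-invariant $\nu_{c'}$, allowing me to insert the cutoff $\id_{\eta_s^{\eps N}(k'')\le A}$ at a cost $O(A^{-1})$ uniform in $N,\eps,l$. In Step 2 I coarse-grain by partitioning $\Lambda_{k'',\eps N}$ into $M\approx \eps N/l$ disjoint $l$-blocks centered at $y_1,\dots,y_M$, writing
\[
\eta^{\eps N}(k'')=\frac1M\sum_{i=1}^M \eta^l(y_i)+R_{N,l,\eps},
\]
where the boundary remainder $R$ satisfies $\E_N[|R|]=O(l/(\eps N))$ by \eqref{nbhd expected nmbr}. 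Since $\Phi$ is Lipschitz with constant $g^*$, the triangle inequality reduces the lemma to proving
\[
\limsup_{l\to\infty}\limsup_{\eps\to 0}\limsup_{N\to\infty}\sup_{k,k'}\E_N\Bigl[\int_0^T |\eta_s^l(k)-\eta_s^l(k')|\,\id_{\eta_s^{\eps N}(k'')\le A}\,ds\Bigr]=0,
\]
where the supremum ranges over pairs with $2l<|k-k'|\le 2\eps N$ and $\Lambda_{k,l}\cup\Lambda_{k',l}\subset\Lambda_{k'',\eps N}$.

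Steps 3--4 then repeat the Rayleigh/hyperplane argument of Lemma \ref{lem: 1 block} but with the joined generator $L_{k,k',l}$ in place of $L_{k,l}$. The entropy inequality with $H(\mu_t^N\,|\,\Pam_{c_0}^N)\le CN$ and the Feynman--Kac formula bound the relevant expectation by the largest eigenvalue of $N^2L_N+\gamma N V_{k,k',l,A}$, where $V_{k,k',l,A}=|\eta^l(k)-\eta^l(k')|\id_{\eta^{\eps N}(k'')\le A}$. Convexity lets me condition to $\Omega_{k,k',l}$, replacing the $L_N$-Dirichlet form by that of $L_{k,k',l}$ in \eqref{D form, 2 block}. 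The only nontransparent term is the artificial gluing bond between $k+l$ and $k'-l$; by the moving-particle lemma (cf.~p.~87, \cite{KL}), its Dirichlet form is dominated by $|k-k'|$ times the sum of native $L_N$-bond Dirichlet forms along a straight path joining $k+l$ to $k'-l$, so that
\[
E_{\Pam_{c_0}^N}[\sqrt f(-L_N\sqrt f)]\;\ge\; (\eps N)^{-1} E_{\nu^{c_0}_{k,k',l}}[\sqrt{f_{k,k',l}}(-L_{k,k',l}\sqrt{f_{k,k',l}})].
\]
The variational bound thus carries effective Dirichlet-form coefficient $(\eps\gamma)^{-1}$; hyperplane decomposition and the Rayleigh expansion \eqref{eqn: rayleigh expansion}, using the positive spectral gap $b_{l,l,j}>0$ of \eqref{Poincare for 2 block}, then produce a leading term of order $\gamma\eps\,C_{l,l,j}C_{l,A,V}^2$ which vanishes as $\eps\to 0$ for fixed $\gamma,l$, and reduces matters to controlling $\sup_j E_{\nu_{k,k',l,j}}[V_{k,k',l,A}]$.

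Step 5 closes via equivalence of ensembles: under the canonical measure $\nu_{k,k',l,j}$, the averages $\eta^l(k)$ and $\eta^l(k')$ are exchangeable by the symmetric roles of the two sub-blocks, and each converges in $L^1$ to the common mean $j/|\Lambda_{k,k',l}|$ as $l\to\infty$, uniformly for $j/|\Lambda_{k,k',l}|\in[0,A]$ (cf.~p.~355, \cite{KL}). Letting $\gamma\to\infty$ at the end kills the residual $C/\gamma$ entropy term, yielding \eqref{eqn: local 2 block}. The main obstacle is the glued-bond comparison in Step 3, since the moving-particle bound introduces a loss factor growing like $\eps N$; absorbing this loss into the Rayleigh denominator is precisely why the prescribed order of limits requires $\eps\to 0$ to be taken strictly after $N\to\infty$ and strictly before $l\to\infty$.
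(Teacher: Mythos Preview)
Your overall strategy---coarse-grain, cut off, entropy/Feynman--Kac, gluing-bond Dirichlet comparison, hyperplane decomposition, Rayleigh expansion, equivalence of ensembles---matches the paper's proof. However, there is a genuine gap in where you place the cutoff.

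Your $V_{k,k',l,A}=|\eta^l(k)-\eta^l(k')|\,\id_{\eta^{\eps N}(k'')\le A}$ depends on all sites of $\Lambda_{k'',\eps N}$, not only on $\Lambda_{k,k',l}$. This breaks two steps. First, the conditioning ``convexity lets me condition to $\Omega_{k,k',l}$'' requires $V$ to be $\Omega_{k,k',l}$-measurable in order to write $E_{\Pam_{c_0}^N}[Vf]=E_{\mu_{k,k',l}}[V f_{k,k',l}]$; with your $V$ this identity fails, and there is no monotone replacement since dropping the indicator leaves an unbounded function. Second, even if you could condition, the event $\eta^{\eps N}(k'')\le A$ only yields $\eta^l(k)+\eta^l(k')\le (2\eps N+1)A/(2l+1)$, so $\|V\|_\infty$ grows like $\eps N/l$. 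The Rayleigh error term is then of order $\gamma\eps\,C_{l,l,j}\,(\eps N/l)^2$, which does not vanish as $N\to\infty$ before $\eps\to 0$.

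The fix is the paper's: swap your Steps 1 and 2. Coarse-grain first to reduce to the two-block comparison, and \emph{then} introduce the cutoff on the two blocks themselves, taking $V_{k,k',l,A}=|\eta^l(k)-\eta^l(k')|\,\id_{\eta^l(k)+\eta^l(k')\le A}$. This indicator is $\Omega_{k,k',l}$-measurable and gives $\|V\|_\infty\le A$ uniformly in $N,\eps$; the attractiveness bound for the complementary event $\{\eta^l(k)+\eta^l(k')>A\}$ goes through unchanged since $\Lambda_{k,k',l}\cap\D_N=\emptyset$. With this correction the rest of your argument is sound and coincides with the paper's.
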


\begin{proof}
We separate the argument into steps.
\vskip .1cm

{\it Step 1.}
Since $\Phi(\cdot)$ is Lipschitz,
to prove the lemma, it suffices to show \eqref{eqn: local 2 block} with $\Phi(\eta_s^l(k)) - \Phi(\eta_s^{\eps N}(k''))$ replaced by $\eta_s^l(k) - \eta_s^{\eps N}(k'')$.
We may further replace $ \eta_s^{\eps N}(k'')$ by 
$ (2\eps N +1)^{-1}\sum_{k'\in \L_{k'',\eps N-l}} \eta^l (k')$. Indeed, the replacement error is
\[
\E_N
\Big[
\Big |
\eta_s^{\eps N}(k'')
-
\dfrac{1}{2\theta N+1} \sum_{k'\in \L_{k'',\eps N-l}}
\eta_s^{l}(k')
\Big |
\Big]
\leq
\dfrac{2l+1}{2\eps N +1} 
\E_N
\big[
\eta_s^l(k''-\eps N+l) + \eta_s^l(k''+\eps N-l)
\big] .
\]
By attractiveness (as explained in Step 1 of the proof of the local 1-block Lemma \ref{lem: 1 block}), for either case $g(n)\sim n^\alpha$ or $g$ bounded, the expectation term $\E_N$ is bounded uniformly in $t$, $l$, and $N$, the right-hand side of the above display vanishes as $N\uparrow\infty$ first.

Therefore, the lemma will follow if we show
\begin{equation} \label {eqn: two block, no cutoff}
\limsup_{l\to \infty}
\limsup_{\theta \to 0}
 \limsup_{N\to \infty}
 \sup_{k,k'}
\E_N
\Big[
\int_0^T 
\Big |
\eta_s^{l}(k)
-
\eta_s^l(k')
 \Big |
ds
\Big]
=
0
\end{equation}
where the $\sup$ is taken over all $k,k'$ such that $\Lambda_{k,k',l} \cap \D_N = \emptyset$ and $2l+1\leq k'-k \leq \theta N$.

\vskip .1cm

{\it Step 2.}
By a similar coupling argument as in the Step 1 of the proof of local $1$-block Lemma \ref{lem: 1 block},
we may apply a cutoff of large densities.
Therefore, to prove the lemma, it suffices to show
\begin{equation*}
\limsup_{l\to \infty}
\limsup_{\theta \to 0}
 \limsup_{N\to \infty}
 \sup_{k,k'}
\E_N
\Big[
\int_0^T 
\Big |
\eta_s^{l}(k)
-
\eta_s^l(k')
 \Big |
  \id_{\{\eta_s^{l} (k,k') \leq A \}}
ds
\Big]
=
0
\end{equation*}
where $\eta_s^{l} (k,k') =  \eta_s^l(k) + \eta_s^l(k')$
and the $\sup$ is over $k,k'$ as in \eqref{eqn: two block, no cutoff}.

Let $U_{k,k',l,A}(\eta)
:=
\left|
\eta^l (k)
- 
\eta^l(k')
\right|
\id_{\{\eta^l(k,k')\leq A\}}$.
Following the proof of Lemma \ref{lem: 1 block}, for fixed $l,\theta,N,k,k'$, in order to estimate
$\E_N
\big[
\int_0^T 
U_{k,k',l,A} (\eta_s)
ds
\big]$,
it suffices to bound
\begin{equation}
\label{eigen_2blocks}
(\gamma N)^{-1} \lambda_{N,l}
=
\sup_f
\left\{
E_{\Pam_{c_0}^N} \left [  U_{k,k',l,A} f  \right ]
-\gamma^{-1} N 
E_{\Pam_{c_0}^N} \left [
\sqrt f (-L_N \sqrt f)
\right ]
\right\}.
\end{equation}
where the supremum is over all $f$ which are densities with respect to $\Pam_{c_0}^N$.

\vskip .1cm
{\it Step 3.}
Recall the generator $L_{k,k',l}$ and its Dirichlet form defined in the beginning of this subsection. 
We now argue the following Dirichlet form inequality
\begin{equation}\label {eqn: d form kk'l LN}
E_{\Pam_{c_0}^N} \big [
\sqrt f (-L_{k,k',l}\sqrt f)
\big ]
\leq
 \theta N
E_{\Pam_{c_0}^N}\big [
\sqrt f (-L_N \sqrt f)
\big ].
\end{equation}
The Dirichlet form with respect to the full generator $L_N$ under $\Pam_{c_0}^N $ is given by
\begin{equation} \label {d form, full LN}
E_{\Pam_{c_0}^N } \left[ f(-L_N f)\right ]
= \sum_{k\in\T_N}  E_{\Pam_{c_0}^N} \Big[
g_{k,N}(\eta(k))
\big(f(\eta^{k,k+1}) - f(\eta) \big)^2
\Big].
\end{equation}

First, writing out the Dirichlet form in \eqref{D form, 2 block}, in terms of the product measure $\Pam_{c_0}^N$, we have
\begin{equation*}
\begin{split}
E_{\Pam_{c_0}^N} \left [ f(-L_{k,k',l} f) \right ]
=&
\sum_{x,x+1\in \L_{k,k',l}}
 E_{\Pam_{c_0}^N} \Big[
g(\eta(x))
\big(f(\eta^{x,x+1}) - f(\eta) \big)^2
\Big]\\
 &+
 E_{\Pam_{c_0}^N} \Big[
g(\eta(k+l))
\big(f(\eta^{k+l,k'-l}) - f(\eta) \big)^2
 \Big].
\end{split}
\end{equation*}

Next, by adding and subtracting at most $\theta N$ terms, we have
\begin{equation*}
\begin{split}
\left [  f\left(\eta^{k+l,k'-l}  \right)   - f(\eta) \right ]^2
\leq
(k'-k-2l) \sum_{q=0}^{k'-k-2l-1}
\left [f\left(\eta^{k+l,k+l+q+1}  \right)  -  f\left(\eta^{k+l,k+l+q}  \right)  \right ]^2
.
\end{split}
\end{equation*}
 By  the change of variables $\xi = \eta^{k+l,k+l+q} $, which takes away a particle at $k+l$ and adds one at $k+l+q$, we have 
$\Pam_{c_0}^N (\eta)
=
\dfrac{g(\eta(k+l+q)+1)}{g(\eta(k+l))}
\Pam_{c_0}^N (\xi)$.
Then
\begin{align*}
& E_{\Pam_{c_0}^N } \left [
g(\eta(k+l))
\left [f\left(\eta^{k+l,k+l+q+1}  \right)  -  f\left(\eta^{k+l,k+l+q}  \right)  \right ]^2 \right]\\
 =&
\sum_{\xi} \Pam_{c_0}^N (\eta) 
g(\eta(k+l))
 \left [f\left(\xi^{k+l+q,k+l+q+1}  \right)  -  f\left(\xi  \right)  \right ]^2\\
 =&
 E_{\Pam_{c_0}^N } \Big [ 
 g(\eta(k+l+q))
 \Big [  f\left(\eta^{k+l+q,k+l+q+1}  \right)   -f\left(\eta  \right)  \Big]^2
 \Big ].
\end{align*}
From these observations, \eqref {eqn: d form kk'l LN} follows.

\vskip .1cm
{\it Step 4.} 
Let $\mu_{k,k',l}$ be the restriction of $\mu=\Pam_{c_0}^N$ to $\L_{k,k',l}$. Clearly, $\mu_{k,k',l} = \nu_{k,k',l}$.
Inputting \eqref{eqn: d form kk'l LN} into \eqref{eigen_2blocks}, and considering the conditional expectation of $f$ with respect to $\Omega_{k,k',l}$ as in the $1$-block estimate proof, we have
\[
(\gamma N)^{-1} \lambda_{N,l}
\leq
\sup_{f_{k.k',l}}
\Big\{
E_{\mu_{k,k',l}} \left [ U_{k,k',l,A} f_{k,k',l}  \right ]
-\dfrac{1}{\theta \gamma} 
E_{\mu_{k,k',l}} \Big [
\sqrt {f_{k,k',l}} (-L_{k,k',l} \sqrt {f_{k,k',l}})
\Big ]
\Big\},
\]
where the supremum is over densities $f_{k,k',l}$ with respect to $\mu_{k,k',l}$.

Again, as in the proof of the $1$-block estimate, decomposing $f_{k,k',l}  d\mu_{k,k'.l}$ along configurations with common total number $j$, we need only to bound
\begin{equation*}
\sup_{0\leq j\leq A(2l+1)}
\sup_f
\left\{
E_{\nu_{k,k',l,j}} \left [ U_{k,k',l,A} f  \right ]
-\dfrac{1}{\theta \gamma} 
E_{\nu_{k,k',l,j}} \left [
\sqrt f (-L_{k,k',l} \sqrt f)\right]
\right\},
\end{equation*}
where the supremum is over densities $f$ with respect to $\nu_{k,k',l,j}$.
\vskip .1cm

{\it Step 5.}
Consider the centered object
\begin{equation*}
\widehat U_{k,k',l,A}
=
U_{k,k',l,A}
-
E_{\nu_{k,k',l,j}}
\left[U_{k,k',l,A}   \right].
\end{equation*}
Recall $C_{l,l,j}$, the inverse spectral gap of $L_{k,k',l}$ from \eqref{Poincare for 2 block} and note that $\| \widehat U_{k,k',l,A}\|_{\infty} \leq  A$. 
Using the Rayleigh expansion (cf. p.375, \cite{KL}), we have 
\begin{equation*}
\begin{split}
&E_{\nu_{k,k',l,j}} \big [  \widehat U_{k,k',l,A} f  \big ]
-(\theta \gamma)^{-1}
E_{\nu_{k,k',l,j}} \big [
\sqrt f (-L_{k,k',l} \sqrt f)
\big ]\\
\leq&
\dfrac{\theta \gamma}{1-2 A C_{l,l,j}\,\theta  \gamma}
E_{\nu_{k,k;,l,j}}\big [
 \widehat U_{k,k',l,A}  (-L_{k,k',l} )^{-1}
 \widehat U_{k,k',l,A}
\big ]\\
\leq&
\dfrac{\theta \gamma C_{l,l,j} }{1-2 A C_{l,l,j}\,\theta \gamma}
E_{\nu_{k,k',l,j}}\big [
 \widehat U_{k,k',l,A} ^2
\big ]
\to 0 \text{ as } \theta \to 0.
\end{split}
\end{equation*}

\vskip .1cm {\it Step 6.}  Recall the definition of $U_{k,k',l,A}$ in Step 2.  To finish, we still need to estimate the centering term
$E_{\nu_{k,k',l,j}}\left[ U_{k,k',l,A} \right ]$.
By adding and subtracting $j/(4l+2)$, we 
need only bound 
$E_{\nu_{k,k',l,j}}\left[\big |\eta^l(k)- j/(4l+2)\big|\right]$.   By exchangeability and
an equivalence of ensemble estimate (cf.~p.~355 \cite{KL}), the canonical variance
\begin{align*}
&E_{\nu_{k,k',l,j}}\left[\big |\eta^l(k)- j/(4l+2)\big|^2 \right] =
O(l^{-1})E_{\nu_{k,k',l,j}} \left[(\eta(k) - j/(4l+2))^2\right] \\
&\ \ \ \ \ \ \ \ \ \ + O(1) E_{\nu_{k,k',l,j}}\left[(\eta(k) - j/(4l+2) )(\eta(k+1)-j/(4l+2))\right]
\end{align*}
 and is further bounded by
$C(A){\rm Var}_{\nu^{j/(4l+2)}_{k,k',l} }\left( \eta^l(k)\right)$ for some constant $C(A)$ depending only on $A$.
 This variance is of order $O(l^{-1})$, since the single
site variance
${\rm Var}_{\nu^{j/(4l+2)}_{k,k',l} }\left(\eta(k)\right)$ is uniformly
bounded for $j/(4l+2)\leq A$. 
 Hence,
$\sup_{0\leq j \leq A(4l+2) }E_{\nu_{k,k',l,j}} \left [ V_{k,k',l,A} \right ]$ is of order
$O(l^{-1/2})$, vanishing as $l\uparrow \infty$.  This finishes the proof.
\end{proof}

\begin{Rmk}
\label{remark about +}
We comment that Lemmas \ref{lem: 1 block} and \ref{lem: 2 blocks} with the choice $k=k'=k''$ gives the replacement of $g(\eta_s(k))$ by $\Phi\left(\eta_s ^{\eps N}(k)\right)$ which will be used in the `bulk' replacement below.
Moreover, choosing $k'=x+1$, $k=x+l+1$, and $k''=x+\eps N+1$, 
we may replace $g(x+1)$ by 
$\Phi\left(\eta_s ^{\eps N,+}(x)\right)$.
Here $\eta^{l,+}(x) = \frac{1}{2l+1}\sum_{y=x+1}^{x+2l+1}(x)$ is the average number of particles over the $2l+1$ neighboring sites to the right of $x$.
Such an average will be used to treat `replacement' near the boundary of a defect site in Section \ref{sec: rplmt lem at bdry}.

\end{Rmk}

\subsection{Bulk Replacement Lemma}
Let $G(t,x)$ be a bounded function on $[0,T] \times \T$ with compact support on $[0,T] \times (\T\setminus \D)$.
As in Remark \ref{remark about +}, 
Lemma  \ref{lem: 1 block}  implies that
\begin{equation*} 
\limsup_{l\to \infty}
\limsup_{N\to \infty} 
 \E_N
\Big[
\dfrac1N \sum_{k\in \T_N }
\Big|
\int_0^T 
G(s,k/N) 
\left(
g(\eta_s(k))
- 
\Phi(\eta_s^l(k))\right)ds
\Big|
\Big]
=0
\end{equation*}
and by Lemma \ref{lem: 2 blocks},
 \begin{equation*}  
\limsup_{l\to \infty}
\limsup_{\eps\to 0} 
\limsup_{N\to \infty} 
 \E_N
\Big[
\dfrac1N \sum_{k\in \T_N}
\int_0^T 
G(s,k/N) 
\Big|
\Phi\left( \eta_{s}^{l}(k)\right)
-
\Phi\left( \eta_{s}^{\eps N}(k)\right)
\Big|
ds
\Big]
=0.
\end{equation*}
By Markov's inequality and triangle inequality, we obtain
\begin{Lem} [Bulk Replacement Lemma]
\label{rplmt_global_lem}

For each bounded function $G(t,x)$ on $[0,T] \times \T$ with compact support on $[0,T] \times (\T\setminus \D)$, and $\delta>0$, we have
\begin{equation*}
\begin{split}
\limsup_{\theta\to 0} \limsup_{N\to \infty} 
 \P_N
\Big[
\Big|
\dfrac1N \sum_{k\in \T_N }
\int_0^T 
G\big(s,\frac k N \big) 
\Big( g\left (\eta_s (k)\right )
-\Phi\left(\eta_s^{\eps N}(k)\right)\Big) ds
\Big|
 > \delta
\Big]
=0.
\end{split}
\end{equation*}
\end{Lem}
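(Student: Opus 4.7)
The plan is to derive Lemma \ref{rplmt_global_lem} as a direct consequence of the two displayed expectation bounds immediately above it, via the triangle inequality and Markov's inequality. First, I would insert an intermediate block scale $l$, writing for every $k$ and $s$
\[
g(\eta_s(k)) - \Phi(\eta_s^{\eps N}(k)) = \bigl[g(\eta_s(k)) - \Phi(\eta_s^l(k))\bigr] + \bigl[\Phi(\eta_s^l(k)) - \Phi(\eta_s^{\eps N}(k))\bigr].
\]
Then, after multiplying by $G(s,k/N)$, integrating in $s$, averaging $N^{-1}\sum_k$, and pulling absolute values inside the sum and integral via the triangle inequality, the quantity inside the probability in the lemma is bounded above by the sum of the two expressions appearing inside $\E_N[\cdot]$ in the two displayed bounds preceding the lemma statement.

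Next, taking $\E_N$ and invoking those two bounds, the expectation of this upper bound vanishes under the iterated limits $\limsup_{l\to\infty}\limsup_{\eps\to 0}\limsup_{N\to\infty}$. Markov's inequality then promotes this expectation estimate to the desired probability estimate: for every $\delta>0$, writing $W_{N,\eps}$ for the centered empirical functional appearing inside the probability in the lemma,
\[
\P_N\bigl[\,|W_{N,\eps}| > \delta\,\bigr] \;\leq\; \delta^{-1}\,\E_N\bigl[\,|W_{N,\eps}|\,\bigr].
\]
Since the auxiliary parameter $l$ is introduced only through the triangle inequality and does not appear in the statement, it can be sent to infinity after $\eps\to 0$ and $N\to\infty$, leaving precisely the two outer limits $\limsup_{\eps\to 0}\limsup_{N\to\infty}$ recorded in the lemma.

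The only point to verify is that the sums effectively range over $k$ whose $\eps N$-block $\Lambda_{k,\eps N}$ is disjoint from $\D_N$, so that the uniform $\sup_{k}$ bounds in Lemmas \ref{lem: 1 block} and \ref{lem: 2 blocks} can be applied term by term and accumulated as $\tfrac{1}{N}\sum_k a_k \leq \sup_k a_k$. This is secured by the hypothesis that $G$ has compact support in $[0,T]\times(\T\setminus\D)$: there is a positive distance $\tau>0$ between $\mathrm{supp}(G)$ and $\D$, so for $\eps<\tau/2$ and $N$ large, any $k$ with $G(s,k/N)\neq 0$ satisfies $\Lambda_{k,\eps N}\cap\D_N=\emptyset$, and in particular $\Lambda_{k,l}\cap\D_N=\emptyset$ for $l\leq\eps N$. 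Consequently no substantive obstacle arises; the proof is a short bookkeeping exercise assembling the already-proven local $1$- and $2$-block estimates.
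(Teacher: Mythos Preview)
Your proposal is correct and follows essentially the same approach as the paper: split via the intermediate scale $l$, apply the two displayed expectation bounds (coming from Lemmas \ref{lem: 1 block} and \ref{lem: 2 blocks}), and conclude by the triangle and Markov inequalities. Your additional remark about the compact support of $G$ guaranteeing $\Lambda_{k,\eps N}\cap \D_N=\emptyset$ makes explicit a point the paper leaves implicit.
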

\begin{Rmk}
We comment, in the $g(n)\sim n^\alpha$ setting (where 'FEM' as stated in \cite{KL} holds), the ``attractiveness" assumption used in the local $1$ and $2$-block estimates to introduce cutoffs of large densities in a local region, may be dropped in the statement of the `global average' Lemma \ref{rplmt_global_lem}.

\end{Rmk}

\section{Replacement at the boundary}
\label{sec: rplmt lem at bdry}
In this section, we show a local replacement near the defect sites, used in Subsection \ref{sec: boundary behavior}, in two steps. In Lemma \ref{replacement: slow site}, we show that the jump rate $g_{k,N}(\eta_s(k))$ at any site $k\in \T_N$ is close to $g_{k+1,N}(\eta_s(k+1))$, that of its neighbor site $k+1$. 

In later use, when $k$ is a defect site in $\D_N$, this neighbor site will be a non-defect or regular site for $N$ large.
Then, we may apply local $1$ and $2$-blocks estimates from last section to obtain our local replacement Lemma  \ref{local replace, slow site} near the defect sites. 

 In both lemmas, the replacements will hold in the $g(n)\sim n^\alpha$ and $g$ bounded settings.

\begin{Lem} 
 \label{replacement: slow site}
 Let $G(\cdot): [0,T]\mapsto \R$ be bounded.
 Then, we have
\begin{equation*}
\limsup_{N\to \infty}
\sup_{k\in \T_N}
\E_N \Big[ \Big| 
\int_0^T G(s) 
\big(
 g_{k,N}(\eta_s(k)) 
-
 g_{k+1,N}(\eta_s(k+1))  \big)
  ds\Big |  \Big]
  =0.
\end{equation*} 
\end{Lem}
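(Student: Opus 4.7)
The plan is to split the analysis by whether $k$ or $k+1$ lies in $\D_N$, exploiting that macroscopic separation of defects forces, for $N$ large, at most one of $k, k+1$ to lie in $\D_N$ and the neighbors $k-1, k+2$ to be regular whenever one of them is a defect. A preliminary reduction to $G\in C^1([0,T])$ is possible since attractiveness (Lemma~\ref{lem: total nmbr bound}) gives the uniform bound $\sup_{N,s,k} \E_N[g_{k,N}(\eta_s(k))] \leq \Phi(c')$, which yields
\[
\E_N\Big|\int_0^T (G-G_\epsilon)(s)\bigl[g_{k,N}(\eta_s(k))-g_{k+1,N}(\eta_s(k+1))\bigr]\,ds\Big| \;\leq\; 2\Phi(c')\,\|G-G_\epsilon\|_{L^1([0,T])},
\]
uniformly in $k$ and $N$, so that once the smooth case is settled, $G$ may be any bounded measurable function.

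\emph{Case 1} ($k,k+1\notin\D_N$): choose a symmetric block $\L_{k_c,l}$ of length $2l+1$ containing $\{k,k+1\}$ and disjoint from $\D_N$ (possibly off-centre from $k$; feasible for any fixed $l$ and $N$ large by macroscopic separation). Lemma~\ref{lem: 1 block} replaces both $g(\eta_s(k))$ and $g(\eta_s(k+1))$ by the common $\Phi(\eta_s^l(k_c))$, so their difference vanishes uniformly in such $k$ as $l\to\infty$ and then $N\to\infty$. \emph{Case 2} ($k=k_{j,N}\in\D_N$, $k+1$ regular; the reverse configuration is symmetric, using $\psi=\eta(k)$): the generator identity
\[
L_N\eta(k+1) \;=\; \bigl[g_{k,N}(\eta(k))-g_{k+1,N}(\eta(k+1))\bigr] \;+\; \bigl[g_{k+2,N}(\eta(k+2))-g_{k+1,N}(\eta(k+1))\bigr],
\]
combined with Dynkin's formula applied to $\psi(\eta)=\eta(k+1)$, gives
\[
\int_0^T G\bigl[g_{k,N}(\eta_s(k))-g_{k+1,N}(\eta_s(k+1))\bigr]ds \;=\; N^{-2}\!\int_0^T G\,d\eta_s(k+1) - N^{-2}\!\int_0^T G\,dM^\psi_s - \int_0^T G\bigl[g_{k+2,N}(\eta_s(k+2))-g_{k+1,N}(\eta_s(k+1))\bigr]ds.
\]
The final integral is a regular-pair difference at $(k+1,k+2)$ (with $k+2\notin\D_N$ for $N$ large), handled by Case~1. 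Integration by parts for $G\in C^1$ bounds $|\int_0^T G\,d\eta_s(k+1)|$ by $\|G\|_\infty(\eta_T(k+1)+\eta_0(k+1))+\|G'\|_\infty T\sup_s\eta_s(k+1)$, whose $\E_N$-expectation is $O_G(1)$ since $k+1$ is regular and $\E_N[\eta_s(k+1)]\leq c'$ by attractiveness (Lemma~\ref{lem: total nmbr bound}(2)). The quadratic variation satisfies $\E_N[\langle M^\psi\rangle_T]\leq CN^2T$ (each of the three adjacent rates has $\E_N[g_{\cdot,N}(\eta_s(\cdot))]\leq\Phi(c')$, including at the defect site $k$), hence $\E_N|\int_0^T G\,dM^\psi_s|\leq C\|G\|_\infty N\sqrt T$. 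The $N^{-2}$-scaled contribution is therefore $O(N^{-1})$, uniformly in $k$.

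The main obstacle is Case~2: the rate $g_{k,N}(\eta_s(k))$ at a super-slow defect admits no direct local-density replacement via Lemma~\ref{lem: 1 block}, whose blocks must avoid $\D_N$, and $\eta_s(k)$ itself may be of order $N^{\beta_j/\alpha}$, arbitrarily far above any bulk scale. The Dynkin identity sidesteps this by transferring all estimation to the regular neighbour $k+1$, whose occupation, adjacent rates, and martingale quadratic variation remain uniformly bounded by attractiveness regardless of the defect strength $\beta_j$; this is what yields the uniform-in-$k$ estimate for arbitrarily strong slow sites.
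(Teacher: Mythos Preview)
Your argument is correct and takes a genuinely different route from the paper. One small correction: in the integration-by-parts bound, replace $\|G'\|_\infty T\sup_s\eta_s(k+1)$ by $\|G'\|_\infty\int_0^T\eta_s(k+1)\,ds$; attractiveness controls $\E_N[\eta_s(k+1)]$ pointwise in $s$, not the pathwise supremum, but the integral form gives $\E_N\bigl[\int_0^T\eta_s(k+1)\,ds\bigr]\leq c'T$ directly, which is all you need.

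The paper's proof does not split into cases and does not invoke Lemma~\ref{lem: 1 block}. Instead it treats all pairs $(k,k+1)$ uniformly via the entropy inequality and Feynman--Kac: writing $V_{s,k}=G(s)(g_{k,N}(\eta(k))-g_{k+1,N}(\eta(k+1)))$ and an auxiliary $U_{s,k}=2N^{-1}G(s)^2(g_{k,N}(\eta(k))+g_{k+1,N}(\eta(k+1)))$, one bounds the largest eigenvalue of $N^2L_N+\kappa N(V_{s,k}-\kappa U_{s,k})$ by exploiting the reversibility identity $g_{k,N}(\eta(k))\Pam_c^N(\eta)=g_{k+1,N}(\eta(k+1)+1)\Pam_c^N(\eta^{k,k+1})$ to rewrite $E_{\Pam_c^N}[V_{s,k}f]$ as a single-bond gradient term, followed by Cauchy--Schwarz against the Dirichlet form. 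Your approach instead uses the Dynkin identity at the regular neighbour to push the defect-adjacent difference onto a purely regular pair, then cites the already established $1$-block estimate. This is more elementary in that it avoids a fresh Feynman--Kac computation, and it makes transparent why the defect strength $\beta_j$ is irrelevant: all estimation happens at $k+1$. The paper's method is more self-contained (it does not rely on Lemma~\ref{lem: 1 block}) and handles every $k$ in one stroke without a case distinction.
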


\begin{proof}
Let
\[
U_{s,k}(\eta)=\,
2 N^{-1}
 (G(s))^2
   \big(
    g_{k,N}(\eta(k)) 
   +
  g_{k+1,N}(\eta(k+1))   \big).
\]
By Lemma \ref{lem: total nmbr bound},
$ \lim_{N\to \infty} 
\sup_{k\in \T_N}
\E_N \big[\int_0^T U_{s,k}(\eta_s)ds \big] = 0$.
Let
\[
V_{s,k}(\eta)
=\,
G(s)  \big(
 g_{k,N}(\eta(k)) 
-
 g_{k+1,N}(\eta(k+1))  \big).
\]
Then, to prove the lemma, it suffices to show that
\begin{equation} 
\label {ean: kappa N limit}
\limsup_{\kappa \to \infty}
\limsup_{N\to \infty}
\sup_{k\in \T_N}
\E_N \Big[ \Big | 
\int_0^T V_{s,k}(\eta_s) ds\Big | 
-\kappa \int_0^T  U_{s,k}(\eta_s) ds
   \Big]
  =0.
\end{equation}
Recall the initial entropy bound $H(\mu^N| \Pam_{c_0}^N)\leq C N$. Then, for $c=c_0$ and $\kappa>0$, by the entropy inequality, the expectaton in the previous display is bounded from above by \begin{equation*}
\dfrac{C}{\kappa }
+
\dfrac{1}{\kappa N} \ln \E_{\Pam_c^N}
\Big[
\exp
 \Big\{
  \kappa N  
  \Big |
\int_0^T
V_{s,k} (\eta_s) ds \Big |
-
\kappa^2 N \int_0^T  U_{s,k} (\eta_s) ds
  \Big\}
\Big].
\end{equation*}
The absolute value in the right hand side of last inequality can be dropped by using $e^{|x|} \leq e^x + e^{-x}$.
By Feynman-Kac formula (cf.\,p.336, \cite{KL}), we have
\begin{equation*}
\begin{split}
\dfrac{1}{\kappa N} \ln \E_{\Pam_c^N}
\Big[
\exp
 \Big\{
  \kappa N  
\int_0^T
(V_{s,k} - \kappa U_{s,k}) ds
  \Big\}
\Big]
\leq
\dfrac 1 {\kappa N} 
\int_0^T \lambda_{N,k}(s) ds
\end{split}
\end{equation*}
where $\lambda_{N,k}(s)$ is the largest eigenvalue of 
$N^2 L_N + \kappa N (V_{s,k}(\eta) - \kappa U_{s,k}(\eta))$.
Fix $s\in [0,T]$ and note the variational formula for $\lambda_{N,k}$: 
\begin{equation*}
(\kappa N)^{-1} \lambda_{N,k}
=
\sup_f
\left\{
E_{\Pam_c^N}\left[ (V_{s,k}-\kappa U_{s,k})  f  \right]
-\kappa^{-1} N 
E_{\Pam_c^N}\big [
\sqrt f (-L_N \sqrt f) \big ]
\right\}
\end{equation*}
where the supremum is over all $f$ which are densities with respect to $\Pam_c^N$ (cf.\,\cite{KL}, p.\,377).
Thus, to prove \eqref{ean: kappa N limit}, it remains to show, for any density $f$,
\begin{equation}
\label {eqn: U V D_N bound}
E_{\Pam_c^N}[V_{s,k} f]
\leq
E_{\Pam_c^N}
[ \kappa U_{s,k} f ]
+
\kappa^{-1} N 
E_{\Pam_c^N}\big [
\sqrt f (-L_N \sqrt f) \big ].
\end{equation}

By the product structure of $\Pam_c^N$, we have
\[
g_{k,N}(\eta(k)) \Pam_c^N (\eta)
=
g_{k+1,N}(\eta(k+1)+1) \Pam_c^N (\eta^{k,k+1}).
\]
Thus, we compute that
\[
\begin{split}
E_{\Pam_c^N}\left[  V_{s,k} f  \right]
=
&E_{\Pam_c^N} \Big[
 G(s) 
\Big(
 g_{k,N}(\eta(k)) 
-
 g_{k+1,N}(\eta(k+1))  \Big)
 f(\eta) \Big]\\
  =&
E_{\Pam_c^N} \Big[
 G(s) 
 g_{k,N}(\eta(k)) 
\Big(
  f(\eta)
-
  f(\eta^{k,k+1})  \Big)
 \Big]\\
  =&
E_{\Pam_c^N} \Big[
 G(s) 
 g_{k,N}(\eta(k)) 
\Big( \sqrt{ f(\eta)} - \sqrt{f(\eta^{k,k+1})} \Big)
\Big( \sqrt{ f(\eta)} + \sqrt{f(\eta^{k,k+1})} \Big)
\Big].
\end{split}
\]
By Cauchy-Schwarz, for any $A>0$, the above display is estimated from above by 
\[
\begin{split}
&A E_{\Pam_c^N} \Big[
 g_{k,N}(\eta(k)) 
\Big( \sqrt{ f(\eta)} 
- \sqrt{f(\eta^{k,k+1})} \Big)^2 \Big]\\
&\qquad\qquad\qquad+
A^{-1} E_{\Pam_c^N} \Big[
 G(s)^2 
 g_{k,N}(\eta(k)) 
\Big( \sqrt{ f(\eta)} + \sqrt{f(\eta^{k,k+1})} \Big)^2.
 \end{split}
 \]
Notice that the first expectation in the above display is bounded by $E_{\Pam_c^N}\big [
\sqrt f (-L_N \sqrt f) \big]$ (cf.~\eqref{d form, full LN}).
Take $A = \kappa^{-1} N$. 
 The second summand is estimated from above by
 \[
 \begin{split}
&2 \kappa N^{-1} E_{\Pam_c^N} \Big[
 (G(s))^2
  g_{k,N}(\eta(k)) 
  \big(
  f(\eta)
   +
 f(\eta^{k,k+1})  \big)
 \Big]\\
 =\,&
 2 \kappa N^{-1} E_{\Pam_c^N} \Big[
 (G(s))^2
   \big(
    g_{k,N}(\eta(k)) 
   +
  g_{k+1,N}(\eta(k+1))   \big)
   f(\eta)
 \Big]\\
 =\,&
 E_{\Pam_c^N} \big[ \kappa U_{s,k} f \big] .
\end{split}
\]
Retracing the terms, we obtain \eqref{eqn: U V D_N bound}, finishing the proof. 
\end{proof}

We now finish this section with a local replacement lemma at defect sites:
\begin{Lem}[Local replacement at defect sites]
\label{local replace, slow site}
Let $G(\cdot): [0,T]\mapsto \R$ be bounded.
Then, for each defect site $k_{j,N} \in \D_N$,
we have
\begin{equation*}
\lim_{\eps \to 0}
\limsup_{N\to \infty}
\E_N \left[ \left| 
\int_0^T G(s) 
\left(
 g_{k_{j,N},N}(\eta_s(k_{j,N})) 
-
 \Phi(\eta_s^{\eps N,+}(k_{j,N})) \right)
  ds\right| \right]
  =0
\end{equation*} 
where $\eta^{l,+}(k)
:= (2l+1)^{-1} \sum_{k+1\leq x \leq k+2l+1} \eta(x)$.
\end{Lem}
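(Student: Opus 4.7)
The plan is to perform three successive replacements, telescoping the difference through two intermediate quantities. Writing $k_j = k_{j,N}$, for any $l\in\N$ with $2l+1\leq \eps N$ I decompose
\begin{align*}
g_{k_j,N}(\eta_s(k_j)) - \Phi(\eta_s^{\eps N,+}(k_j))
&= \bigl[g_{k_j,N}(\eta_s(k_j)) - g_{k_j+1,N}(\eta_s(k_j+1))\bigr] \\
&\quad + \bigl[g(\eta_s(k_j+1)) - \Phi(\eta_s^l(k_j+l+1))\bigr] \\
&\quad + \bigl[\Phi(\eta_s^l(k_j+l+1)) - \Phi(\eta_s^{\eps N}(k_j+\eps N+1))\bigr].
\end{align*}
Here I use that, because the macroscopic defect locations $\{x_i\}_{i\in J}$ are all distinct, $k_j+1\notin \D_N$ for $N$ large, so $g_{k_j+1,N}=g$; and by direct computation $\eta^l(k_j+l+1)=\eta^{l,+}(k_j)$ and $\eta^{\eps N}(k_j+\eps N+1)=\eta^{\eps N,+}(k_j)$, so the decomposition is valid.

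The first bracket is handled by Lemma~\ref{replacement: slow site} applied with site $k=k_j$, giving $\limsup_N \E_N\bigl[\bigl|\int_0^T G(s)(\cdot)\,ds\bigr|\bigr]=0$ with no dependence on $l$ or $\eps$. For the second bracket, the local $1$-block estimate (Lemma~\ref{lem: 1 block}, applied as in Remark~\ref{remark about +} with $k'=k_j+1$ and $k=k_j+l+1$) is valid, since the window $\Lambda_{k,l}=\{k_j+1,\ldots,k_j+2l+1\}$ is disjoint from $\D_N$ for $N$ large; this yields a $\limsup_l \limsup_N$ bound of $0$. For the third bracket, the local $2$-block estimate (Lemma~\ref{lem: 2 blocks}) with $k=k_j+l+1$ and $k''=k_j+\eps N+1$ applies once $\eps N\geq l$ and $\eps$ is smaller than half the minimum distance between defects, so that $\Lambda_{k,l}\subset \Lambda_{k'',\eps N}\subset \T_N\setminus \D_N$; together with boundedness of $G$, this gives a $\limsup_l \limsup_\eps \limsup_N$ bound of $0$.

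To reconcile these iterated limits with the target order $\lim_\eps \limsup_N$, I fix $\delta>0$ and first choose $l_*=l_*(\delta)$ large enough that, with this $l_*$ fixed, both the $1$-block $\limsup_N$ bound and the $2$-block $\limsup_\eps\limsup_N$ bound from the previous paragraph are each less than $\delta/3$. I then choose $\eps_0=\eps_0(l_*,\delta)$ so that $\limsup_N$ of the $2$-block error (with $l_*$ fixed) is less than $\delta/3$ for all $\eps\leq\eps_0$. Adding the three bounds and using that the first bracket vanishes as $N\to\infty$ yields $\limsup_N \E_N[\text{total error}]<\delta$ for all $\eps\leq\eps_0$; letting $\delta\to 0$ completes the proof.

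The only genuinely new ingredient beyond the bulk machinery is Lemma~\ref{replacement: slow site}, which moves the jump rate off the defect $k_j$ onto its regular right-neighbor $k_j+1$; the remaining two replacements are the standard $1$- and $2$-block estimates applied on the right-sided window $\{k_j+1,\ldots,k_j+2l+1\}\subset \T_N\setminus \D_N$, so no further obstacles arise. I expect the only mild care needed is the coordination of the three limits, handled as above.
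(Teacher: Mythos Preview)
Your proposal is correct and follows exactly the paper's approach: apply Lemma~\ref{replacement: slow site} to step off the defect onto the regular neighbor $k_j+1$, then invoke the local $1$- and $2$-block estimates (Lemmas~\ref{lem: 1 block} and~\ref{lem: 2 blocks}) on the right-sided window, precisely as spelled out in Remark~\ref{remark about +}. Your treatment is in fact more explicit than the paper's, which dispatches the argument in three sentences; in particular, your verification of the identities $\eta^l(k_j+l+1)=\eta^{l,+}(k_j)$ and your reconciliation of the iterated limit orders are details the paper leaves to the reader.
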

\begin{proof}
Lemma \ref{replacement: slow site}
shows we may replace 
$g_{k_{j,N},N}(\eta_s(k_{j,N}))$
by $g_{k_{j,N}+1,N}(\eta_s(k_{j,N}+1))$.
To finish, notice $k_{j,N}+1 \in \T_N\setminus \D_N$ for $N$ large. Then, we may further replace  $g_{k_{j,N}+1,N}(\eta_s(k_{j,N}+1))$ by $ \Phi(\eta_s^{\eps N,+}(k_{j,N}))$ using Lemma  \ref{lem: 1 block} and Lemma \ref{lem: 2 blocks}; see Remark \ref{remark about +}.
The proof is now complete.
\end{proof}

\section{Energy estimate}
\label{sec: energy section}
By Lemma \ref{lem: abs continuity}, we know that $Q$ is supported on paths 
$\pi_t(dx)$ which can be decomposed into an absolute continuous part  $\rho(t,x)dx$ and atoms $\sum_{j\in J_c} \m_j(t) \delta_{x_j}(dx)$.
 In this section, we prove an energy estimate for $\rho(t,x)$. 
\begin{Prop} \label {prop: weak derivative of Phi}
$Q$ is supported on paths $\pi_t(dx)$ such that $\Phi(\rho(t,x))$ is weakly differentiable with respect to $x$ on $[0,T]\times \T$ and $\partial_x \Phi(\rho(t,x))$, the weakly derivative, satisfies 
\begin{equation} 
\label{eqn: energy of Phi_x}
\int_0^T \int_\T \dfrac{(\partial_x \Phi(\rho(t,x)))^2}{\Phi(\rho(t,x)} dx\, dt < \infty.
\end{equation}
\end{Prop}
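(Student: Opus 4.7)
The plan is the standard energy-estimate machinery adapted to zero-range with defects: derive a microscopic inequality via Feynman--Kac and the reversibility of $\Pam^N_{c_0}$, pass to the hydrodynamic limit using Lemma \ref{rplmt_global_lem}, upgrade to an $\omega$-wise uniform bound via a $\max$-over-countable-family trick, and then invoke Riesz representation in $L^2(\Phi(\rho)\,dxds)$.

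Fix $G\in C_c^\infty([0,T]\times(\T\setminus\D))$; for $N$ large every site in the support of $G(s,\cdot)$ is regular, so $g_{k,N}=g$ there. The reversibility identity
$\int H(\eta)g(\eta(k))\,d\Pam^N_{c_0}=\int H(\eta^{k+1,k})g(\eta(k+1))\,d\Pam^N_{c_0}$, which comes from the product form of $\Pam^N_{c_0}$ and the marginal identity $\Pb_{\Phi(c_0)}(n+1)g(n+1)=\Phi(c_0)\Pb_{\Phi(c_0)}(n)$, rewrites, for any density $f$,
$$\sum_k G(s,k/N)\!\int(g(\eta(k+1))-g(\eta(k)))f\,d\Pam^N_{c_0}=\sum_k G(s,k/N)\!\int g(\eta(k+1))(f(\eta)-f(\eta^{k+1,k}))\,d\Pam^N_{c_0}.$$
Splitting $f-f(\eta^{k+1,k})=(\sqrt f-\sqrt{f(\eta^{k+1,k})})(\sqrt f+\sqrt{f(\eta^{k+1,k})})$, applying $2|ab|\le N^{-1}a^2+Nb^2$, and using the same identity to recognize the resulting quadratic bond-term as the $k$-th bond contribution to the Dirichlet form, I find that the largest eigenvalue of $N^2L_N+NW^N_G(s,\cdot)$ is $\le 0$, where
$$W^N_G(s,\eta):=\sum_k G(s,k/N)(g(\eta(k+1))-g(\eta(k)))-\tfrac{1}{2N}\sum_k G(s,k/N)^2(g(\eta(k))+g(\eta(k+1))).$$
Feynman--Kac gives $E_{\Pam^N_{c_0}}[\exp(N\!\int_0^TW^N_G\,ds)]\le 1$, and the entropy inequality with $H(\mu^N|\Pam^N_{c_0})=O(N)$ then yields $\E_N[\int_0^TW^N_G\,ds]\le C_0$, with $C_0$ independent of $G$ and $N$.

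Summation by parts converts the linear piece into $-\tfrac1N\sum_k\partial_xG(s,k/N)g(\eta_s(k))+o(1)$; Lemma \ref{rplmt_global_lem} then replaces each $g(\eta_s(k))$ by $\Phi(\eta_s^{\theta N}(k))$, and passing $N\uparrow\infty$ (using $Q^N\Rightarrow Q$ from Lemma \ref{lem: tightness}) followed by $\theta\downarrow 0$ (using the decomposition in Lemma \ref{lem: abs continuity}) yields, after the analogous handling of the quadratic term,
$$E_Q\Bigl[-\!\!\int_0^T\!\!\int_\T\partial_xG\,\Phi(\rho)\,dxds-\!\!\int_0^T\!\!\int_\T G^2\,\Phi(\rho)\,dxds\Bigr]\le C_0.$$
To upgrade to an $\omega$-wise bound, I fix a countable dense $\{G_n\}\subset C_c^\infty([0,T]\times(\T\setminus\D))$ and redo the Feynman--Kac step with $\max_{i\le m}W^N_{G_i}$: the elementary $\exp(\max_i a_i)\le\sum_i\exp(a_i)$ gives $E_{\Pam^N_{c_0}}[\exp(N\!\int_0^T\max_iW^N_{G_i}\,ds)]\le m$, so the entropy inequality produces $\E_N[\max_{i\le m}\int_0^TW^N_{G_i}\,ds]\le C_0+\log m/N$. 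Taking $N\to\infty$ for fixed $m$ and then $m\to\infty$ by monotone convergence, the supremum over $n$ of the corresponding limit expression is finite $Q$-almost surely, and density of $\{G_n\}$ propagates the bound to every admissible $G$. Substituting $\lambda G$ for $G$ and optimizing in $\lambda>0$ yields, $Q$-almost surely, the Cauchy--Schwarz-type inequality
$$\Bigl|\int_0^T\!\!\int_\T\partial_xG\,\Phi(\rho)\,dxds\Bigr|^2\le 4K(\pi)\int_0^T\!\!\int_\T G^2\,\Phi(\rho)\,dxds,\qquad K(\pi)<\infty.$$

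This last bound identifies $G\mapsto\int\!\int\partial_xG\,\Phi(\rho)$ as a bounded linear functional on a dense subspace of $L^2(\Phi(\rho)\,dxds)$; Riesz representation produces $h\in L^2(\Phi(\rho)\,dxds)$ with $\|h\|_{L^2(\Phi(\rho))}^2\le 4K(\pi)$ and $\int\!\int\partial_xG\,\Phi(\rho)=-\int\!\int Gh\,\Phi(\rho)$, so $\partial_x\Phi(\rho)=h\,\Phi(\rho)$ exists weakly on $[0,T]\times(\T\setminus\D)$. The continuity of $\Phi(\rho(t,\cdot))$ across each defect $x_j$ (at $x_j\in\D_s$ via $\rho(t,x_j)=c_0$ from Lemma \ref{lem: boundary cond, super}, and at $x_j\in\D_c$ via the boundary identifications in Lemmas \ref{lem: bdry at critical} and \ref{lem: boundary, bounded}) kills the boundary contributions in integration by parts, extending the weak derivative to all of $[0,T]\times\T$. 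Finally,
$$\int_0^T\!\!\int_\T\frac{(\partial_x\Phi(\rho))^2}{\Phi(\rho)}\,dxdt=\int_0^T\!\!\int_\T h^2\,\Phi(\rho)\,dxds\le 4K(\pi)<\infty,$$
which is \eqref{eqn: energy of Phi_x}. The main obstacle is arranging the $\max$-over-countable-family argument to survive the hydrodynamic limit: the single-$G$ inequality and its Feynman--Kac input are essentially routine, but one must verify that the maximum commutes with the replacement limits and that the $\log m/N$ error vanishes uniformly; the Riesz step and the boundary-continuity extension from $\T\setminus\D$ to $\T$ are then straightforward.
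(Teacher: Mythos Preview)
Your approach is essentially the paper's: entropy inequality plus Feynman--Kac to get a uniform microscopic bound, the $\max$-over-countable-family upgrade, bulk replacement to pass to $Q$, Riesz representation in $L^2(\Phi(\rho)\,dxds)$, and then extension across $\D$. The only substantive difference is that you work with nearest-neighbor differences $g(\eta(k+1))-g(\eta(k))$ and do a discrete integration by parts afterward, whereas the paper takes $\epsilon N$-scale differences from the start; both routes are fine.

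Two points to tighten. First, in the $\max$ step you write $E_{\Pam^N_{c_0}}\bigl[\exp\bigl(N\!\int_0^T\max_iW^N_{G_i}\,ds\bigr)\bigr]\le m$, but with the $\max$ \emph{inside} the time integral this does not follow from $e^{\max a_i}\le\sum e^{a_i}$. What you actually need (and what the paper does) is to bound $\E_N\bigl[\max_i\int_0^TW^N_{G_i}\,ds\bigr]$ via entropy, then use $e^{\max_i a_i}\le\sum_ie^{a_i}$ with $a_i=N\!\int_0^TW^N_{G_i}\,ds$; each summand is $\le 1$ by your single-$G$ Feynman--Kac bound, giving the $\log m/N$ error. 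Keep the $\max$ outside the integral throughout.

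Second, your extension from $\T\setminus\D$ to $\T$ is circular as written: you invoke $\rho(t,x_j)=c_0$ from Lemma~\ref{lem: boundary cond, super} (and the analogous pointwise conclusions of Lemmas~\ref{lem: bdry at critical}, \ref{lem: boundary, bounded}), but the passage from the ``with~$\theta$'' statements \eqref{eqn: D_s bdry, with theta}, \eqref{eqn: D_c bdry with theta}, \eqref{max_rho, g bded} to the pointwise boundary values (Step~5 in each lemma) explicitly uses Proposition~\ref{prop: weak derivative of Phi} to know $\rho(t,\cdot)$ is continuous. The paper avoids this by citing Lemma~\ref{local replace, slow site} directly: once you have $\partial_x\Phi(\rho)\in L^1$ on $\T\setminus\D$, the one-sided limits $\Phi(\rho(t,x_j\pm))$ exist for a.e.~$t$, and Lemma~\ref{local replace, slow site} together with its left-sided analogue shows both equal $\lim_{\theta\to 0}\Phi(\eta_s^{\theta N,\pm}(k_{j,N}))\sim g_{k_{j,N},N}(\eta_s(k_{j,N}))$, hence coincide. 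Use that route instead.
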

\begin{Rmk} \label{rmk on deriv L2}
As $\rho(t,x)\in L^\infty ([0,T]\times \T)$ (see Lemma \ref{lem: abs continuity}) and $\Phi(\cdot)$ is Lipschitz, it follows from \eqref{eqn: energy of Phi_x} that $\partial_x \Phi(\rho(t,x)) \in L^2 ([0,T]\times \T)$.
\end{Rmk} 
\begin{proof}[Proof of Proposition \ref{prop: weak derivative of Phi}]
The proof presented here is based on the one of Theorem 7.1, p.~102, \cite{KL}. However, because of the presence of the defect sites and the difference in the underlying topology, many details are different in subtle ways.

Let $\{H_j \}_{j\in \N}$ be a dense sequence in $C_c^{0,1}([0,T]\times (\T\setminus \D) )$ under the norm $\|H\|_\infty + \|\partial_x H\|_\infty$.
We split the proof into steps.
In the first of two steps,
we show there is a constant $K_0$ such that, for all $\epsilon$ small,
\begin{equation} \label{tilde W limit}
\limsup_{N\to \infty} 
\E_N 
\Big[
\max_{1\leq j\leq m} 
\Big\{ \int_0^T  W_N(\epsilon,H_j(s,\cdot), \eta_s) ds  \Big\}
\Big]
\leq K_0
\end{equation}
where
\begin{equation*}
\begin{split}
 W_N(\epsilon, H(\cdot), \eta)
:=&
\sum_{ x \in \T_N} \dfrac{H(x/N)}{\epsilon N}
\left( g_{x,N}(\eta(x)) - g_{x+\epsilon N,N} (\eta(x+\epsilon N))   \right) \\
&-
\dfrac {2}{N} \sum_{x\in\T_N} \dfrac{H^2(x/N)}{\epsilon N} \sum_{0\leq k\leq \epsilon N} g_{x+k,N}(\eta(x+k)) .
\end{split}
\end{equation*}

\vspace{0.1cm}
{\it Step 1.}
Let $c=c_0$. By the entropy inequality, the expectation in \eqref{tilde W limit} is bounded from above by
\begin{equation*}
\begin{split}
\dfrac{H(\mu^N | \Pam_{c_0}^N)}{2N}
+
\dfrac1{2N} \ln \E_{\Pam_{c_0}^N}
\Big[
\exp
 \Big\{
\max_{1\leq j\leq m} 
\Big\{ 2N \int_0^T W_N (\epsilon,H_j(s,\cdot), \eta_s) ds  \Big\}
  \Big\}
\Big].
\end{split}
\end{equation*}
For convenience, we set $c=c_0$.
Using $H(\mu^N| \Pam_{c}^N)\leq CN$ and $e^{\max a_j} \leq \sum e^{a_j}$, the limsup in $N$ of the previous display is estimated from above by
\begin{equation*}
\begin{split}
\dfrac C2
+
\max_{1\leq j\leq m} 
\limsup_{N\to\infty}
\dfrac1{2N} \ln \E_{\Pam_c^N }
\Big[
\exp
\Big\{
 2N \int_0^T W_N(\epsilon,H_j(s,\cdot), \eta_s) ds
  \Big\}
\Big].
\end{split}
\end{equation*}
By Feynman-Kac formula, for any fixed index $j$, the limsup term in the previous display is less than or equal to
\[
\limsup_{N\to \infty}
 \int_0^T 
 \sup_f
 \left\{
 E_{\Pam_c^N } \left[ W_N (\epsilon, H_j(s,\cdot), \eta) f(\eta) \right]
- \dfrac N2 E_{\Pam_c^N } \Big[\sqrt f (-L_N\sqrt f)\Big] 
\right\}
ds
\]
where the supremum is over all $f$ which are densities with respect to
$\Pam_c^N$. 

\vspace{0.1cm}
{\it Step 2.}
To show \eqref{tilde W limit}, it now remains to show, for all $H$ in $C_c^1 (\T\setminus \D)$, that
\begin{equation} \label{eqn: log exp negative}
\begin{split}
 E_{\Pam_c^N} \left[ W_N(\epsilon, H(\cdot), \eta) f(\eta) \right]
- N E_{\Pam_c^N}  \Big[\sqrt f (-L_N \sqrt f)\Big] 
\leq 0.
\end{split}
\end{equation}

We first compute that $E_{\Pam_c^N } \left[ W_N (\epsilon, H(\cdot), \eta) f(\eta) \right]$ equals
\begin{equation} \label {eqn: H_s derivative estimate}
\begin{split}
&
E_{\Pam_c^N }
\Big[
\sum_{x\in\T_N} \dfrac{H(x/N)}{\epsilon N}
( g_{x,N}(\eta(x)) - g_{x+\epsilon N, N} (\eta(x+\epsilon N)) ) f(\eta)
\Big]\\
&-
\dfrac{2}{N} 
E_{\Pam_c^N }
\Big[ \sum_{x\in \T_N} \dfrac{H^2(x/N)}{\epsilon N} \sum_{0\leq k\leq \epsilon N} g_{x+k,N}(\eta(x+k))
f(\eta) \Big].
\end{split}
\end{equation}
Let $\delta_x$ be the configuration with the only particle at $x$ and  $\eta + \delta_x$ be the configuration obtaining from adding one particle at $x$ to $\eta$
By the definition of $\Pam_c^N $, we have, for each $x$, 
\begin{equation}\label {change of variable, Pam}
E_{\Pam_c^N } [g_{x,N}(\eta(x)) f(\eta)]
=
\varphi E_{\Pam_c^N } [ f(\eta+\delta_x)]
\end{equation}
where $\varphi=\Phi(c)$. Then, the first expectation in 
  \eqref{eqn: H_s derivative estimate} is written as
\begin{equation} \label {eqn: H_s derivative estimate, rewritten}
\begin{split}
\sum_{x\in \T_N} \dfrac{ \varphi H(x/N)}{\epsilon N}
E_{\Pam_c^N }
\Big[
f(\eta+\delta_x) - f(\eta+\delta_{x+\epsilon N})
\Big].
\end{split}
\end{equation}
which is rewritten as
\begin{equation} \label{eqn: H_s derivative estimate, 2nd term, rewritten}
\begin{split}
& E_{\Pam_c^N }
\Big[
\sum_{x\in \T_N} 
\sum_{0\leq k \leq \epsilon N -1}
\dfrac{\varphi H(x/N)}{\epsilon N}
\big(\sqrt{f(\eta+\delta_{x+k})} + \sqrt{f(\eta+\delta_{x+k+1})}\big)\\
&\qquad\qquad\qquad\qquad\qquad\qquad \qquad\times
\big(\sqrt{f(\eta+\delta_{x+k})} - \sqrt{f(\eta+\delta_{x+k+1})}\big)
\Big].
\end{split}
\end{equation}
Using $2ab\leq a^2 + b^2$, for any $A>0$, \eqref{eqn: H_s derivative estimate, 2nd term, rewritten} is bounded from above by
\begin{equation} \label{eqn: H_s estim, separated}
\begin{split}
& E_{\Pam_c^N }
\Big[
\sum_{x\in \T_N} 
\sum_{0\leq k \leq \epsilon N -1}
\dfrac{\varphi H^2(x/N)}{2\epsilon N A}
\big(\sqrt{f(\eta+\delta_{x+k})} + \sqrt{f(\eta+\delta_{x+k+1})}\big)^2
\Big]\\
&\qquad  +
 E_{\Pam_c^N }
\Big[
\sum_{x\in \T_N} 
\sum_{0\leq k \leq \epsilon N -1}
\dfrac{\varphi A}{2\epsilon N}
\big(\sqrt{f(\eta+\delta_{x+k})} - \sqrt{f(\eta+\delta_{x+k+1})}\big)^2
\Big] :=I_1 + I_2.
\end{split}
\end{equation}

The $I_2$ term in \eqref{eqn: H_s estim, separated} is rewritten as
$
\sum_{x\in \T_N}
\dfrac{\varphi A}{2} 
E_{\Pam_c^N } \big[
\big(\sqrt{f(\eta+\delta_{x})} - \sqrt{f(\eta+\delta_{x+1})}\big)^2
\big]$ which, by the change of variable formula \eqref{change of variable, Pam}, is further recognized as the Dirichelet form 
$ \dfrac A2 E_{\Pam_c^N } \big[\sqrt f (-L_N\sqrt f)\big]$, cf.~\eqref{d form, full LN}. 
For the first expectation $I_1$ in \eqref{eqn: H_s estim, separated}, using first $(\sqrt a + \sqrt b)^2 \leq 2(a+b)$ and then \eqref{change of variable, Pam}, it is bounded from above by
\begin{equation} \label {eqn: H_s derivative estimate, ii}
\sum_{x\in \T_N}
\dfrac{2 H^2(x/N)}{\epsilon N A}
\sum_{0\leq k \leq \epsilon N}
E_{\Pam_c^N }
\Big[ g_{x+k,N}(x+k) f(\eta) \Big].
\end{equation}
Notice that the summation of $k$ is ranging from $0\leq k \leq \epsilon N$ instead of $0\leq k \leq \epsilon N-1$.
Now, we set $A = N$.
Putting together \eqref {eqn: H_s derivative estimate} and \eqref{eqn: H_s derivative estimate, ii}, 
we obtain  \eqref{eqn: log exp negative} with $K_0 = C/2$.

\vspace{0.1cm}
{\it Step 3.}
Recall that $H_j$'s have compact support in $[0,T]\times( \T\setminus \D)$. 
Let $\iota_\delta = (2\delta)^{-1} \id_{[-\delta, \delta]}$.
Applying the Bulk Replacement Lemma (Lemma \ref{rplmt_global_lem}) to \eqref{tilde W limit} and taking $N\to\infty$, we  obtain 
\begin{equation*}
\begin{split}
&\limsup_{\delta\to0}E_Q 
\Big[
\max_{1\leq j\leq m} 
\Big\{  \int_0^T  \int_\T 
 H_j(s,x) 
 \epsilon^{-1}
\big(  \Phi(  \iota_\delta*\rho_s(x))
 -
  \Phi(  \iota_\delta*\rho_s(x+\epsilon))
    \big)
 dx  ds\\
&\qquad \qquad\qquad\qquad-
 2\int_0^T  \int_\T H_j^2(s,x) 
 \epsilon^{-1} 
 \int_{x}^{x+\epsilon}
  \Phi(  \iota_\delta*\rho_s(u)) du
  dx  ds
\Big\}
\Big]
\leq K_0,
\end{split}
\end{equation*}
Here $\iota_\delta*\rho_s(\cdot) = \int_\T \iota_{\delta}(x-\cdot) \rho(s,x)dx$
and $\rho(s,x)\in L^1([0,T]\times \T)$ is the absolute continuous part of each limit path $\pi_s$ (cf.~Lemma \ref{lem: abs continuity}).

Sending $\delta\to 0$, applying a discrete integration by parts, and then taking $\epsilon\to 0$, we have
\begin{equation*}
\begin{split}
&E_Q 
\Big[
\max_{1\leq j\leq m} 
\Big\{  \int_0^T  \int_\T \partial_x H_j(s,x) \Phi(\rho(s,x)) dx  ds\\
&\qquad \qquad\qquad\qquad\qquad\qquad-
 2\int_0^T  \int_\T H_j^2(s,x)  \Phi(\rho(s,x))    dx  ds
\Big\}
\Big]
\leq K_0,
\end{split}
\end{equation*}
By monotone convergence, the $\max_{1\leq j\leq m} 
$ above can be replaced by $\max_{1\leq j< \infty}$.
Furthermore, as $H_j$ is dense in $C_c^{0,1}([0,T]\times ( \T\setminus \D))$ with respect to the norm $\|H\|_\infty + \|\partial_x H\|_\infty$, we conclude 
\begin{equation} \label {esti before Rieze rep}
E_Q 
\Big[
\sup_H
\Big\{  \int_0^T  \int_\T \partial_x H(s,x) \Phi(\rho(s,x)) dx  ds
-
 2\int_0^T  \int_\T H^2(s,x)  \Phi(\rho(s,x))    dx  ds
\Big\}
\Big]
\leq K_0,
\end{equation}
where the $\sup$ is over $H\in C_c^{0,1}([0,T]\times ( \T\setminus \D))$.

\vspace{0.1cm}
{\it Step 4.}
As a result of \eqref{esti before Rieze rep}, for $Q$-a.e.\,path $\pi_t(dx)$, there exists $B=B(\pi_t)$ such that, for all $H\in C_c^{0,1}([0,T]\times ( \T\setminus \D))$,
\begin{equation*}
\begin{split}
\int_0^T  \int_\T \partial_x H(s,x) \Phi(\rho(s,x)) dx  ds
-
2 \int_0^T  \int_\T H^2(s,x)  \Phi(\rho(s,x))    dx  ds
\leq
B.
\end{split}
\end{equation*}
Define on $C_c^{0,1}([0,T]\times (\T\setminus \D))$ a linear functional $l(H) := \int_0^T  \int_\T \partial_x H(s,x) \Phi(\rho(s,x)) dx  ds$.
Also define $\|H\|_{2,\rho} := \left( \int_0^T  \int_\T H^2(s,x)  \Phi(\rho(s,x))  dx  ds\right)^{1/2}$.
 Then we have, for all $a\in \R$
\[
a\,l(H) -
2a^2\left( \|H\|_{2,\rho}\right)^2
\leq B.
\]
Maximizing the left hand side over $a\in \R$, we obtain $l(\cdot)$ is a bounded linear functional under the norm $\|\cdot\|_{2,\rho}$. By Riesz representation theorem, there exists $F$ such that $\| F\|_{2,\rho}<\infty$ and it holds that $l(H) = \langle H, F\rangle_\rho
:=\int_0^T  \int_\T H(s,x) F(s,x)  \Phi(\rho(s,x))  dx  ds$.
Define $\partial_x \Phi(\rho(t,x)) := -F(t,x) \Phi(\rho(t,x))$. Then, we have shown that $\Phi(\rho(t,x))$ is weakly differentialble with respect to $x$ on $[0,T]\times ( \T\setminus \D)$ and the weak derivative $\partial_x \Phi(\rho(t,x))$ satisfies \eqref{eqn: energy of Phi_x}. 

To finish, we argue that the weak differentiability  of $x$ may be extended from $\T\setminus \D$ to $\T$ with the same weak derivative $\partial_x \Phi(\rho(t,x))$.
The weak differentialbility on $[0,T]\times (\T\setminus \D)$ implies that, for almost all $t\in [0,T]$, $\Phi(\rho(t,\cdot))$ is absolutely continuous on $\T\setminus \D$.
It suffices to show that, for almost all $t\in [0,T]$, $\Phi(\rho(t,\cdot))$  is continuous on the defects $\D$.
Notice that, as $(\partial_x \Phi(\rho) )^2/  \Phi(\rho)$ and $\Phi(\rho)$ are both in $L^1([0,T]\times \T)$, by Cauchy-Schwarz, we have $\partial_x \Phi(\rho)$ is in $L^1([0,T]\times \T)$ as well.
Then, for almost all $t\in[0,T]$, we have  that the limits
 $\lim_{x\to x_j\pm} \Phi(\rho(t,x))$
exist and are finite for all $x_j\in \D$
(cf.~Remark \ref{rmk on w-deriv}). 
Moreover, by Lemma \ref{local replace, slow site}, the left and right limits match.
Therefore, we conlude the continuity of 
$\Phi(\rho(t,\cdot))$ on $\D$, 
finishing the proof.
\end{proof}

\section{Uniqueness} \label{section: uniqueness}
In this section, we present the uniqueness of the weak solutions to equations \eqref{pde: k alpha} and \eqref{pde: bounded g}.
The proof is based on an energy argument (cf.~\cite{JLT}). 
Recall Definitions  \ref{def: weak sln} and \ref{def: weak sln, bounded g}.

\begin{Thm} \label {thm: uniqueness}
There exists at most one weak solution to 
\eqref{pde: k alpha}.
\end{Thm}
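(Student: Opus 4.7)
The plan is to adapt a duality argument, building a test function from the difference of the two candidate solutions and exploiting the monotonicity of $\Phi$ together with the Dirichlet condition at super-slow sites, in the spirit of \cite{JLT}. Let $\pi_t^1$ and $\pi_t^2$ be two weak solutions of \eqref{pde: k alpha} with the same initial datum $\pi$. Writing $u = \rho^1 - \rho^2$, $v = \Phi(\rho^1) - \Phi(\rho^2)$, and $w_j = \m_j^1 - \m_j^2$ for $j\in J_c$, subtracting the two weak formulations \eqref{weak sln, def} kills the initial terms and yields, for every admissible $G$,
\[
\int_0^T\!\!\int_\T \partial_t G\, u\, dx\, dt + \sum_{j\in J_c} \int_0^T \partial_t G(t,x_j)\, w_j(t)\, dt + \int_0^T\!\!\int_\T \partial_{xx} G\, v\, dx\, dt = 0.
\]
The natural test function is $G(t,x) = \chi_\e(x) \int_t^T v(s,x)\, ds$, where $\chi_\e$ is a smooth cutoff equal to $1$ outside an $\e$-neighbourhood of $\D_s$ and vanishing on an $\e/2$-neighbourhood, so that $\partial_t G = -\chi_\e v$ and $G(T,\cdot) = 0$. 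With this choice the first two terms become $-\int_0^T\!\!\int_\T \chi_\e v u\, dx\, dt$ and $-\sum_{j\in J_c}\int_0^T \chi_\e(x_j) v(t,x_j) w_j(t)\, dt$, both with a definite sign: $vu \geq 0$ since $\Phi$ is increasing, and $v(t,x_j) w_j(t)\geq 0$ because the identity $\m_j = (\lambda_j\Phi(\rho(t,x_j)))^{1/\alpha}$ makes $w_j$ co-monotone with $v(t,x_j)$.

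Since $v$ is only $L^2$, I would first spatially mollify $v \mapsto v^\delta := v *_x \zeta_\delta$ and then mollify in time (or use a density/Steklov-average argument) so that the candidate $G$ lies in the admissible class, checking that the commutator errors vanish as the mollification parameters are sent to $0$ using only the $L^2$ bounds on $v$ and $\partial_x v$ from Proposition \ref{prop: weak derivative of Phi}. After integration by parts in $x$ and the Fubini identity
\[
\int_0^T\!\!\int_\T \chi_\e\, \partial_x v(t,x) \int_t^T \partial_x v(s,x)\, ds\, dx\, dt \;=\; \tfrac12 \int_\T \chi_\e(x) \Bigl(\int_0^T \partial_x v(t,x)\, dt\Bigr)^{\!2}\!dx,
\]
the identity takes the schematic form
\[
\int_0^T\!\!\int_\T \chi_\e v u\, dx\, dt + \sum_{j\in J_c}\int_0^T \chi_\e(x_j) v(t,x_j) w_j(t)\, dt + \tfrac12 \int_\T \chi_\e \Bigl(\int_0^T \partial_x v\, dt\Bigr)^{\!2}\!dx \;=\; R_\e,
\]
where $R_\e$ collects the commutator terms generated by $\partial_x\chi_\e$ and $\partial_{xx}\chi_\e$.

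The hard part, and the step I expect to require the most care, is showing $R_\e \to 0$. Its integrand is supported in $\e$-neighbourhoods of $\D_s$ and carries factors of order $\e^{-1}$ and $\e^{-2}$, so genuine cancellations are needed. Here the Dirichlet condition $v(t,x_j)\equiv 0$ at $x_j\in\D_s$ is decisive: combined with $\partial_x v(t,\cdot)\in L^2$ and the absolute continuity of $x\mapsto v(t,x)$ for a.e.\ $t$ (cf.~Remark \ref{rmk on w-deriv}), it yields, for $x$ near $x_j\in\D_s$,
\[
|v(t,x)|^2 \;\leq\; |x-x_j|\int_\T (\partial_x v(t,y))^2\, dy,
\]
and, with $U(t,x):=\int_t^T v(s,x)\,ds$, the bound $|U(t,x)|^2\leq T\,|x-x_j|\int_0^T\!\!\int_\T(\partial_x v)^2\,dy\,ds$. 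These estimates provide enough vanishing of $v$ and $U$ at $\D_s$ to absorb the $\e^{-1}$ and $\e^{-2}$ factors, and the absolute continuity of $\int(\partial_x v)^2$ with respect to shrinking domains then gives $R_\e\to 0$. Once this is established, the identity at $\e=0$ equates three non-positive quantities to zero; each must vanish separately, the strict monotonicity of $\Phi$ forces $u\equiv 0$, and the relation $\m_j = (\lambda_j\Phi(\rho(t,x_j)))^{1/\alpha}$ then forces $w_j\equiv 0$ for all $j\in J_c$, completing the uniqueness proof.
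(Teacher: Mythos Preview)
Your proof is correct and follows essentially the same energy/duality strategy as the paper: take the difference of the two weak formulations, test against $G(t,x)=\int_t^T v(s,x)\,ds$ (suitably approximated), use monotonicity of $\Phi$ and of $r\mapsto(\lambda_j r)^{1/\alpha}$ to make the three resulting terms nonnegative, and conclude.

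The only noteworthy technical difference is in how the compact-support constraint away from $\D_s$ is enforced. You multiply by a cutoff $\chi_\e$ and then estimate the resulting commutator $R_\e$; the paper instead builds an $H^1$-approximation $F_\delta$ of $v$ that genuinely vanishes near $\D_s$ (by a stretching construction matching $F_\delta$ to $0$ at $x_j\pm\delta$, which is continuous precisely because $v(t,x_j)=0$), so no commutator term ever appears. Both routes hinge on the same key input, the Dirichlet condition $v(t,x_j)=0$ at super-slow sites, and both are valid. A small imprecision in your write-up: after one integration by parts the commutator $R_\e$ only involves $\partial_x\chi_\e$ (an $\e^{-1}$ factor), not $\partial_{xx}\chi_\e$; your estimate $|U(t,x)|\lesssim |x-x_j|^{1/2}$ combined with Cauchy--Schwarz over the $\e$-annulus indeed gives $|R_\e|\lesssim \big(\int_0^T\!\int_{|x-x_j|\leq\e}(\partial_x v)^2\big)^{1/2}\to 0$, so the argument goes through as written.
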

\begin{proof}
Let $\pi^{(1)}_t$ and $\pi^{(2)}_t$ be two weak solutions of \eqref{pde: k alpha} such that
$\pi^{(i)}_t = \rho_i(t,x)dx + \sum_{j\in J_c} \m^{(i)}_j(t) \delta_{x_j}(dx)$ for $i=1,2$.
As $\m^{(i)}_j(t) =\big[ \lambda_j
 \Phi(\rho_i (t,x_j))\big]^{1/\alpha}$,
to prove the theorem, it suffices to show $\rho_1 = \rho_2$ for almost all $t$.

\vspace{0.1cm}
{\it Step 1.}
Define 
 $\overline \Phi(t,x) := \Phi(\rho_1(t,x)) -  \Phi(\rho_2(t,x))$.
By Definition \ref{def: weak sln},
we have that $\overline \Phi$ is weakly differentiable with respect to $x\in \T$ and 
$\overline \partial_x \Phi$  (and therefore 
$\overline\Phi$) is in $L^2([0,T]\times \T)$.
As $\Phi(\rho_i(t,x_j)) = \Phi(c_0)$ for each $i=1,2$ and $j\in J_s$, we have $\overline \Phi(t,x_j) = 0$ for for all $j\in J_s$.
We first show that $\overline\Phi$ can be approximated `well' by a sequence $\{\overline \Phi_\e\}_{\e>0}$ in the sense that (1) $\overline \Phi_\e$ is smooth and compactly supported on  $[0,T]\times (\T\setminus \D_s)$; 
(2) $\overline \Phi_\e \to \overline \Phi$ and $\partial_x\overline \Phi_\e \to \partial_x \overline \Phi$ in $L^2 ([0,T]\times \T)$;
and (3) 
$\overline \Phi_\e (t,x_j) \to \overline \Phi (t,x_j)$ in $L^2[0,T]$ for all $j\in J_c$ as $\e\to 0$.

Indeed, to verify this approximation, for $\delta>0$, let $\D_{s}^\delta = \cup_{j\in J_s} (x_j-\delta,x_j+\delta)$.
We define $F_\delta$ be such that 
$F_\delta (t,x)= 0$ on $[0,T]\times \D_s^{\delta}$ and
$F_\delta (t,x)=\overline \Phi(t,x)$ on $[0,T]\times (\T \setminus \D_s^{2\delta})$. 
For $(t,x)\in \D_s^{2\delta} \setminus \D_s^{\delta}$, define
\[
F_\delta(t,x)=
\begin{cases}
\overline \Phi(t,2x-x_j-2\delta) 
&{\rm on \ } [0,T]\times [x_j+\delta,x_j+2\delta),\\
\overline \Phi(t,2x-x_j+2\delta) 
& {\rm on \ } [0,T]\times (x_j-2\delta,x_j-\delta].\\
\end{cases}
\]
We note that $F_\delta$ has compact support in $[0,T]\times (\T\setminus \D_s)$.
It is easy to check that $F_\delta$ and $\partial_x F_\delta$ approximate $\overline \Phi$ and $\partial_x\overline \Phi$ in $L^2([0,T]\times \T)$ respectively.
Since $F_\delta$ and $\overline \Phi$ match at $x=x_j$ for all $j\in J_c$,
to find a desired sequence $\{\overline \Phi_\e\}$,
it suffices to show that, for each small $\delta$, there exists $\{F_{\delta,\e}\}$ that approximates $F_\delta$ `well'.
  
To this end, let $\tau_\e(x)$ be a standard mollifier supported on $[-\e,\e]$.
With $F_\delta (t,x)$ extended to be $0$ for $t\notin [0,T]$, we define
 \[
F_{\delta,\e}(t,x) 
:=
\int_\R \int_\T 
F_\delta (t-s,x-u) 
\tau_\e(s) \tau_\e(u) \, du ds .
\]
Notice that $F_{\delta,\e} \in C_c^\infty (\R\times 
(\T\setminus \D_s))$  for $\e<\delta$.
When restricted on $[0,T]\times \T$,
it is standard that $\overline F_{\delta,\e}$ and  
$\partial_x F_{\delta,\e}$ approximates
$ F_{\delta}$ and $\partial_x F_{\delta}$ respectively in $L^2 ([0,T]\times \T)$ as $\e\to 0$.
It remains to show
that $F_{\delta,\e} (t,x_j)$
approximates $F_{\delta} (t,x_j)$
 in $L^2[0,T]$.
Write that
\begin{align*}
&\int_0^T  
\left( F_{\delta,\e} (t,x_j)
-
F_{\delta} (t,x_j) \right)^2
dt\\
&\ \ \ \ \ \ \ =
\int_0^T  
\Big[\int_\R \int_\T
\left( F_{\delta} (t-s,x_j-u) 
-
F_{\delta} (t,x_j)\right) \tau_\e(s) \tau_\e(u) duds
\Big]^2
dt.
\end{align*}
By adding and subtracting 
$F_{\delta} (t-s,x_j)$, the above is bounded above by $I_1 + I_2$ where
\[
\begin{split}
I_1 &:= 
2 \int_0^T  
\Big[\int_\R \int_\T
\left(F_{\delta} (t-s,x_j-u)
-
F_{\delta} (t-s,x_j)\right) \tau_\e(s) \tau_\e(u) duds
\Big]^2 dt,\\
I_2 &:=
2 \int_0^T  
\Big[ \int_\R
\left(
F_{\delta} (t-s,x_j)
-
F_{\delta} (t,x_j) \right)
 \tau_\e(s)ds
\Big]^2
dt
\end{split}
\]
As $\int_\R
F_{\delta} (t-s,x_j) \tau_\e(s)ds$ approximates $F_{\delta} (t,x_j)$ in $L^2[0,T]$, the term $I_2$ vanishes as $\e\to 0$.
For the term $I_1$, using 
$F_{\delta} (t-s,x_j-u) -
F_{\delta} (t-s,x_j) = \int_{x_j}^{x_j-u} \partial_x F_{\delta} (t-s,x)dx$, we have
\[
I_1
\leq
2 \int_0^T  
\int_\R \int_\T
\Big( \int_{x_j}^{x_j-u} \partial_x F_{\delta} (t-s,x) dx \Big)^2 \tau_\e(s) \tau_\e(u) duds
 dt
 \]
The square term above is further bounded by $u^2 \big| \int_{x_j}^{x_j-u} (\partial_x F_\delta(t,x) )^2 dx\big| $.
As $\tau_\e(u)=0$ for $u\notin [-\e,\e]$,  we have that $I_1\leq
2\e^2 \int_0^T \int_\T (\partial_x F_\delta(t,x))^2 dx dt$ which vanishes as $\e\to0$.

\vspace{0.1cm}
{\it Step 2.}
We now proceed to the uniqueness of weak solutions. 
Let $\overline \rho := \rho_1 - \rho_2$ and
 $\overline \m_j (t) := \m_j^{(1)}(t) - \m_j^{(2)}(t)$ for each $j\in J_c$.
As $\pi^{(1)}_t$ and $\pi^{(2)}_t$ both satisfy \eqref{weak sln, def}, we have,
for all $G(t,x) \in C_c^\infty \left( [0,T)\times (\T \setminus \D_s )\right)$,
\begin{equation} \label {eqn: rho1-rho2}
\begin{split}
&\int_0^T \int_\T \partial_tG(t,x) \overline \rho(t,x) dx dt
+
\sum_{j\in J_c}
\int_0^T \partial_t G(t,x_j)
\overline \m_j (t)
  dt
=\int_0^T \int_\T 
\partial_x G(t,x)
 \partial_x \overline\Phi(t,x) 
 dx dt.
\end{split}
\end{equation}
Let $\overline \Phi$ be  approximated `well' by some $\{\overline\Phi_\e\}$ as in Step $1$.
Taking $G(t,x) =-\int_t^T \overline \Phi_\e(s,x)ds$ and then leting $\e\to 0$, we obtain
\begin{equation}
\label{energy eqn, critical}
\begin{split}
&\int_0^T \int_\T \overline \Phi (t,x) \overline \rho(t,x) dx dt
+
\sum_{j\in J_c}
 \int_0^T\overline \Phi(t,x_j)
\overline \m_j (t)
  dt\\
 &\qquad\qquad\qquad\qquad
 = - \int_\T  \int_0^T 
 \Big[\int_t^T
\partial_x \overline\Phi (s,x)ds \Big]
 \partial_x \overline\Phi(t,x) dt dx.
 \end{split}
\end{equation}
The right hand side of the above is computed as $-\dfrac 12 \int_\T 
 \big[ \int_0^T
 \partial_x \overline\Phi(t,x) dt 
 \big]^2 dx \leq 0$.
However, for the left hand side, we have $\overline \Phi (t,x) \overline \rho(t,x)\geq 0$ and $\overline \Phi(t,x_j)
\overline \m_j (t)\geq 0$ for all $t$, $x$, and $j$.
Then, we deduce that $ \int_0^T \int_\T \overline \Phi (t,x) \overline \rho(t,x) dx dt =0$
which implies $\overline\Phi(t,x) \overline\rho(t,x) = 0$ a.e.\ 
and, therefore, $\overline \rho(t,x) = 0$
a.e.  Since $\overline \rho$ is continuous in $x$ for almost all $t$ (cf. Remark \ref{rmk on w-deriv}),
the theorem is proved.
\end{proof}

\begin{Thm} \label {thm: uniqueness, bounded g}
There exists at most one weak solution to 
\eqref{pde: bounded g}.
\end{Thm}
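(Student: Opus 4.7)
The plan is to adapt the energy argument of Theorem~\ref{thm: uniqueness} to the bounded $g$ setting. Let $\pi^{(1)}_t$ and $\pi^{(2)}_t$ be two weak solutions to \eqref{pde: bounded g} with the common initial datum $\pi$, and set $\overline\rho = \rho_1 - \rho_2$, $\overline\m_j = \m_j^{(1)} - \m_j^{(2)}$, and $\overline\Phi(t,x) = \Phi(\rho_1(t,x)) - \Phi(\rho_2(t,x))$. Since $\D_s = \emptyset$ by Condition~\ref{no J_s in bounded}, Definition~\ref{def: weak sln, bounded g}(4) permits test functions $G \in C_c^\infty([0,T)\times\T)$, and subtracting the two weak formulations yields the bounded-$g$ analog of \eqref{eqn: rho1-rho2}. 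Compared with Theorem~\ref{thm: uniqueness}, two genuine modifications are needed: first, the monotone relation $\m_j = [\lambda_j\Phi(\rho(\cdot,x_j))]^{1/\alpha}$ must be replaced by the complementarity conditions of Definition~\ref{def: weak sln, bounded g}(2)--(3); second, after establishing $\rho_1 = \rho_2$, an additional argument is required to conclude $\m_j^{(1)} = \m_j^{(2)}$, since the atoms are no longer determined by the bulk density.

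The observation that makes the energy argument close is the sign relation
\[
\overline\Phi(t,x_j)\,\overline\m_j(t) \;\geq\; 0 \qquad \text{for a.e.\ } t \in [0,T] \text{ and every } j \in J_c,
\]
which I would verify by case analysis of the complementarity conditions at each $x_j$: if both $\Phi(\rho_i(t,x_j))$ equal $\lambda_j^{-1}$ then $\overline\Phi(t,x_j) = 0$; if both are strictly less than $\lambda_j^{-1}$ then both $\m_j^{(i)}(t)$ vanish and so does $\overline\m_j(t)$; and if exactly one of $\Phi(\rho_1(t,x_j)), \Phi(\rho_2(t,x_j))$ attains the threshold, the atom on the strict side is zero while $\overline\Phi(t,x_j)$ and $\overline\m_j(t)$ share the sign dictated by the non-strict side.

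With this in hand, I would mirror Step~1 of the proof of Theorem~\ref{thm: uniqueness} to approximate $\overline\Phi$ by smooth functions $\overline\Phi_\e$ compactly supported in $[0,T)\times\T$, with $\overline\Phi_\e \to \overline\Phi$ and $\partial_x\overline\Phi_\e \to \partial_x\overline\Phi$ in $L^2([0,T]\times\T)$, and $\overline\Phi_\e(\cdot,x_j) \to \overline\Phi(\cdot,x_j)$ in $L^2[0,T]$ for each $j \in J_c$. Because $\D_s = \emptyset$ the construction simplifies to a routine joint mollification of $\overline\Phi$ in $(t,x)$ (extended by zero in time), with the trace convergence at each $x_j$ handled exactly as in the final part of that step via the $L^2$ control of $\partial_x\overline\Phi$. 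Plugging $G(t,x) = -\int_t^T \overline\Phi_\e(s,x)\,ds$ into the weak formulation and letting $\e \to 0$ as in Step~2 of Theorem~\ref{thm: uniqueness} produces the identity
\[
\int_0^T\!\!\int_\T \overline\Phi\,\overline\rho\,dx\,dt + \sum_{j \in J_c} \int_0^T \overline\Phi(t,x_j)\,\overline\m_j(t)\,dt = -\tfrac12 \int_\T \Big[\int_0^T \partial_x\overline\Phi(t,x)\,dt\Big]^{2} dx \;\leq\; 0.
\]
The bulk term on the left is nonnegative by monotonicity of $\Phi$, and the boundary sum is nonnegative by the sign relation above, so both sides must vanish. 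Strict monotonicity of $\Phi$ then forces $\overline\rho = 0$ a.e., which upgrades to pointwise equality for a.e.\ $t$ by continuity of $\rho(t,\cdot)$ (Remark~\ref{rmk on w-deriv}).

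It remains to show $\overline\m_j \equiv 0$ for each $j \in J_c$. I would test the difference of the weak formulations against $G(t,x) = f(t)\psi_j(x)$, where $f \in C^\infty([0,T])$ satisfies $f(T) = 0$ and $\psi_j \in C^\infty(\T)$ satisfies $\psi_j(x_j)=1$ and $\psi_j(x_{j'})=0$ for $j' \neq j$. With $\overline\rho\equiv 0$ (hence $\overline\Phi\equiv 0$), the identity collapses to $\int_0^T f'(t)\,\overline\m_j(t)\,dt = 0$ for all admissible $f$; since each $\m_j^{(i)}$ lies in $L^\infty(0,T)$ (the total mass of $\pi_t$ is uniformly bounded), taking first $f \in C_c^\infty(0,T)$ forces $\overline\m_j$ to be constant a.e., and then $f(t)=(T-t)/T$ pins that constant to zero. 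The main obstacle I anticipate is the trace convergence $\overline\Phi_\e(\cdot,x_j) \to \overline\Phi(\cdot,x_j)$ in $L^2[0,T]$, which is precisely what allows the boundary term in the energy identity to pick up the pointwise value $\overline\Phi(t,x_j)$ that feeds into the sign analysis; once this is secured, the remaining manipulations follow the template of Theorem~\ref{thm: uniqueness} with the complementarity-based sign argument substituted for the monotone one.
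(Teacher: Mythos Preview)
Your proposal is correct and follows essentially the same approach as the paper: both derive the sign relation $\overline\Phi(t,x_j)\,\overline\m_j(t)\ge 0$ from the complementarity conditions, invoke the energy identity of Theorem~\ref{thm: uniqueness} (simplified since $\D_s=\emptyset$) to conclude $\overline\rho=0$, and then test the difference of the weak formulations with a product $G(t,x)=f(t)\psi_j(x)$ localized at $x_j$ to force $\overline\m_j=0$. The paper's final step is marginally cleaner---it takes $G(t,x)=\int_t^T h(s)\,ds\cdot F(x)$ with $h\in C_c^\infty(0,T)$, so that $\partial_t G(t,x_j)=-h(t)$ ranges over all of $C_c^\infty(0,T)$ and gives $\overline\m_j=0$ a.e.\ in one stroke, whereas your two-step argument (constant, then zero) and the specific choice $f(t)=(T-t)/T$ need a mollification to sit inside $C_c^\infty([0,T))$; but this is a cosmetic difference, not a gap.
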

\begin{proof}
For $i=1, 2$, let $\pi^{(i)}_t = \rho_i(t,x)dx + \sum_{j\in J_c} \m^{(i)}_j(t) \delta_{x_j}(dx)$
be two weak solutions to \eqref{pde: bounded g}. 
Notice that $\m^{(i)}_j(t) = \m^{(i)}_j(t) \id_{\rho(t,x_j)=c_j }$ 
and $\rho(t,x_j) \leq c_j$ implies $\overline \Phi(t,x_j)
\overline \m_j (t)\geq 0$ where 
$\overline \Phi:= \Phi(\rho_1) -  \Phi(\rho_2)$ and $\overline \m_j := \m_j^{(1)} - \m_j^{(2)}$.
Following the proof of Theorem \ref {thm: uniqueness}, we have $\overline\rho:= \rho_1- \rho_2=0$ for almost all $t$.
To conclude, it remains to show $ \overline \m_j = 0$ for each $j\in J_c$ for almost all $t$.
To this end, we fix any $j\in J_c$ and take $F\in C^\infty (\T)$ such that $F(x_j) =1$ and $\text{supp}\, F \cap J_c = x_j$. Letting $G(t,x) = \int_t^T h(s) F(x)ds$ in \eqref{eqn: rho1-rho2}
for any $h(t)\in C_c^{\infty} (0,T)$ 
and using $\overline \rho = \overline\Phi=0$,
 we obtain 
$\int_0^T h(t) \overline \m_j (t) dt = 0$, and therefore, $\overline \m_j (t) = 0$ for almost all $t$, finishing the proof.
\end{proof}

\medskip
\noindent{\bf Acknowledgements.} 
SS was partly supported by
ARO-W911NF-18-1-0311.



\begin{thebibliography}{FSSX}

\bibitem {A}
Andjel, E.:
{\em Invariant measures for the zero range processes.}
 Ann. Probab. 10 (1982), no. 3, 525--547.
 
 \bibitem{Evans_Mukamel}
 Angel, A.G.; Evans, M.R.; Mukamel, D.: {\em Condensation transitions in a one-dimensional zero-range process with a single defect site.} J. Stat. Mech. (2004) P04001.
 
 \bibitem{Armendariz}
 Armend\'ariz, I; Grosskinsky, S.; Loulakis, M.: {\em Metastability in a condensing zero-range process in the thermodynamic limit.} Probab. Theory Relat. Fields 169, (2017) 105--175. 
 

\bibitem{Saada_hyd}
 Bahadoran C.; Mountford T.S.; Ravishankar K.; Saada E.: {\em Hydrodynamics in a condensation regime:  The disordered asymmetric zero-range process.}  Ann. Probab. 48, (2020) 404--444.

 
 \bibitem{Saada}
 Bahadoran C.; Mountford T.S.; Ravishankar K.; Saada E.: {\em Zero-range process in random environment}. In: Bernardin C., Golse F.,
 Gon\c{c}alves P., Ricci V., Soares A.J. (eds) From Particle Systems to Partial Differential Equations. ICPS 2019, ICPS 2018, ICPS 2017. Springer Proceedings in Mathematics and Statistics, vol 352. Springer, Cham, 2021.
 
 
 \bibitem{Lan_mart}
 Beltr\'an, J.; Jara, M.; Landim, C.: {\em A martingale problem for an absorbed diffusion: the nucleation phase of condensing zero range processes.} Probab. Theory Relat. Fields 169, (2017) 1169--1220.
 
 \bibitem{Bovier}
 Bovier, A.; den Hollander, F.:  {\em Metastability: a potential-theoretic approach.}
Grundlehren der mathematischen Wissenschaften 351, Springer, Berlin,
2015.
  
 \bibitem{Billingsley}
 Billingsley, P.: {\em Convergence of Probability Measures.}
 John Wiley, New York, 1968.
 
 \bibitem{Olla_reservoir}
 De Masi, A.; Marchesani, S.; Olla, S.; Xu, L.: {\em Quasi-static limit for the asymmetric simple exclusion.}
 arXiv: 2103.08019 (2021).
 
  \bibitem{DePaula}
 De Paula, R.; Gon\c{c}alves, P.; Neumann, A.: {\em Energy estimates and convergence of weak solutions of the porous medium equation.} (2021) arXiv:2101.08900
 
 


 
 \bibitem{EH} 
Evans, M.R.; Hanney, T,: {\em Nonequilibrium statistical mechanics of the zero-range process and related models.} J. Phys. A 38, (2005), R195.


 
\bibitem{Franco_aihp}
Franco, T.;  Gon\c{c}alves, P.;  Neumann, A.:
{\em Hydrodynamical behaviour of symmetric exclusion with slow bonds.}
Ann. Inst. H. Poincar\'e Probab. Statist. (B) 49 (2) (2013) 402--427.

\bibitem{Franco_tams}
Franco, T.; Gon\c{c}alves, P.; Neumann, A.: {\em Phase transition of a heat equation with Robin's boundary conditions and exclusion process.} Trans. AMS 367, (2015) 6131--6158.

\bibitem{Franco_spa}
Franco, T.; Gon\c{c}alves, P.; Sch\"utz, G.M.: {\em Scaling limits for the exclusion process with a slow site.}
Stoch. Proc. Appl. 126 (2016) 800--831.

\bibitem{Frometa}
Fr\'ometa S.; Misturini R.; Neumann A.: {\em The boundary driven zero-range process.} In: Bernardin C., Golse F., Gon\c{c}alves P., Ricci V., Soares A.J. (eds) From Particle Systems to Partial Differential Equations. ICPS 2019, ICPS 2018, ICPS 2017. Springer Proceedings in Mathematics and Statistics, vol 352. Springer, Cham. (2021)
  
  
 \bibitem{JLS}
Jara, M.\,D.; Landim, C.; Sethuraman, S.:
{\em Nonequilibrium fluctuations for a tagged particle in mean-zero one-dimensional zero-range processes.}
Probab.\,Theory Related Fields 145 (2009), no. 3--4, 565--590.

 
 \bibitem{JLT}
 Jara, M.\,D.; Landim, C.; Teixeira, A.:
{\em Quenched scaling limits of trap models.}
Ann. Probab. 39 (2011), no. 1, 176--223.

 \bibitem {KL}
 Kipnis, C.; Landim, C.:
 {\em Scaling limits of interacting particle systems.}
 Grundlehren der Mathematischen Wissenschaften 320. Springer-Verlag, Berlin, 1999.


\bibitem{lan_zrp}
Landim, C.: {\em Hydrodynamical limit for space inhomogeneous one-dimensional totally asymmetric zero-range processes.} Ann. Probab. 24, (1996) 599--638.

\bibitem{Landim_nonrev_condensate}
Landim, C.: {\em Metastability for a non-reversible dynamics: the evolution of the condensate in totally asymmetric zero range processes.} Commun. Math. Phys. 330 (2014) 1--32.

\bibitem{Lan_surv}
Landim, C.: {\em Metastable Markov chains.} Prob. Surveys 16 (2019) 143--227.



\bibitem{L}
Liggett, T.M.: 
{\em Interacting particle systems.} Springer-Verlag, New York, 1985.

\bibitem{Loulakis}
Loulakis, M.; Stamatakis, M.G.: {\em Generalized Young measures and the hydrodynamical limit of condensing zero-range processes.} (2019) arXiv:1910.00493



\bibitem{Vares}
 Olivieri, E.; Vares, M.E.: {\em Large deviations and metastability.}  Encyclopedia
of Mathematics and its Applications, vol. 100. Cambridge University
Press, Cambridge, 2005.







\end{thebibliography}
\end{document}